\documentclass[10pt,oneside]{amsart}


\usepackage{amssymb, amsmath, amsthm, amsfonts}
\usepackage{mathrsfs,comment, mathtools}
\usepackage{stmaryrd}
\input{xypic}
\usepackage{graphicx}
\usepackage{geometry}
\usepackage{float}
\usepackage[all]{xy}

\usepackage[bookmarks,bookmarksopen,bookmarksdepth=4]{hyperref}
\usepackage{url}
\usepackage{enumerate}
\usepackage{color}

\usepackage{verbatim}
\usepackage[T1]{fontenc}
\usepackage{todonotes}


\hypersetup{%
  bookmarksnumbered=true,%
  colorlinks=true,%
  linkcolor=blue,%
  citecolor=blue,%
  filecolor=blue,%
  menucolor=blue,%
  urlcolor=blue,%
  pdfnewwindow=true,%
  pdfstartview=FitBH}

\def\@url#1{{\tt\def~{\lower3.5pt\hbox{\char'176}}\def\_{\char'137}#1}}

\linespread{1.1}

\newtheorem{theorem}{Theorem}[section]
\newtheorem{corollary}[theorem]{Corollary}
\newtheorem{proposition}[theorem]{Proposition}
\newtheorem{lemma}[theorem]{Lemma}

\theoremstyle{definition}
\newtheorem{definition}[theorem]{Definition}
\newtheorem{example}[theorem]{Example}
\newtheorem{examples}[theorem]{Examples}
\newtheorem{remark}[theorem]{Remark}

\newtheorem{construction}[theorem]{Construction}

\theoremstyle{plain}

\newtheorem{xxthm}{Theorem}

\newtheorem{xxcor}[xxthm]{Corollary}


\newcommand{\Mdef}[2]{\newcommand{#1}{\relax \ifmmode #2 \else $#2$\fi}}
\Mdef{\bA}{\mathbb{A}}
\Mdef{\bB}{\mathbb{B}}
\Mdef{\bC}{\mathbb{C}}
\Mdef{\bD}{\mathbb{D}}
\Mdef{\bE}{\mathbb{E}}
\Mdef{\bF}{\mathbb{F}}
\Mdef{\bG}{\mathbb{G}}
\Mdef{\bH}{\mathbb{H}}
\Mdef{\bI}{\mathbb{I}}
\Mdef{\bJ}{\mathbb{J}}
\Mdef{\bK}{\mathbb{K}}
\Mdef{\bL}{\mathbb{L}}
\Mdef{\bM}{\mathbb{M}}
\newcommand{\bN}{\mathbb{N}}
\Mdef{\bO}{\mathbb{O}}
\Mdef{\bP}{\mathbb{P}}
\Mdef{\bQ}{\mathbb{Q}}
\Mdef{\bR}{\mathbb{R}}
\Mdef{\bS}{\mathbb{S}}
\Mdef{\bT}{\mathbb{T}}
\Mdef{\bU}{\mathbb{U}}
\Mdef{\bV}{\mathbb{V}}
\Mdef{\bW}{\mathbb{W}}
\Mdef{\bX}{\mathbb{X}}
\Mdef{\bY}{\mathbb{Y}}
\Mdef{\bZ}{\mathbb{Z}}

\Mdef{\bbS}{\mathbb{S}}

\Mdef{\scrA}{\mathscr{A}}
\Mdef{\scrB}{\mathscr{B}}
\Mdef{\scrC}{\mathscr{C}}
\Mdef{\scrD}{\mathscr{D}}
\Mdef{\scrE}{\mathscr{E}}
\Mdef{\scrF}{\mathscr{F}}
\Mdef{\scrG}{\mathscr{G}}
\Mdef{\scrH}{\mathscr{H}}
\Mdef{\scrI}{\mathscr{I}}
\Mdef{\scrJ}{\mathscr{J}}
\Mdef{\scrK}{\mathscr{K}}
\Mdef{\scrL}{\mathscr{L}}
\Mdef{\scrM}{\mathscr{M}}
\Mdef{\scrN}{\mathscr{N}}
\Mdef{\scrO}{\mathscr{O}}
\Mdef{\scrP}{\mathscr{P}}
\Mdef{\scrQ}{\mathscr{Q}}
\Mdef{\scrR}{\mathscr{R}}
\Mdef{\scrS}{\mathscr{S}}
\Mdef{\scrT}{\mathscr{T}}
\Mdef{\scrU}{\mathscr{U}}
\Mdef{\scrV}{\mathscr{V}}
\Mdef{\scrW}{\mathscr{W}}
\Mdef{\scrX}{\mathscr{X}}
\Mdef{\scrY}{\mathscr{Y}}
\Mdef{\scrZ}{\mathscr{Z}}

\Mdef{\mcA}{\mathcal{A}}
\Mdef{\mcB}{\mathcal{B}}
\Mdef{\mcC}{\mathcal{C}}
\Mdef{\mcD}{\mathcal{D}}
\Mdef{\mcE}{\mathcal{E}}
\Mdef{\mcF}{\mathcal{F}}
\Mdef{\mcG}{\mathcal{G}}
\Mdef{\mcH}{\mathcal{H}}
\Mdef{\mcI}{\mathcal{I}}
\Mdef{\mcJ}{\mathcal{J}}
\Mdef{\mcK}{\mathcal{K}}
\Mdef{\mcL}{\mathcal{L}}
\Mdef{\mcM}{\mathcal{M}}
\Mdef{\mcN}{\mathcal{N}}
\Mdef{\mcO}{\mathcal{O}}
\Mdef{\mcP}{\mathcal{P}}
\Mdef{\mcQ}{\mathcal{Q}}
\Mdef{\mcR}{\mathcal{R}}
\Mdef{\mcS}{\mathcal{S}}
\Mdef{\mcT}{\mathcal{T}}
\Mdef{\mcU}{\mathcal{U}}
\Mdef{\mcV}{\mathcal{V}}
\Mdef{\mcW}{\mathcal{W}}
\Mdef{\mcX}{\mathcal{X}}
\Mdef{\mcY}{\mathcal{Y}}
\Mdef{\mcZ}{\mathcal{Z}}

\Mdef{\tA}{\tilde{A}}
\Mdef{\tB}{\tilde{B}}
\Mdef{\tC}{\tilde{C}}
\Mdef{\tE}{\tilde{E}}
\Mdef{\tH}{\tilde{H}}
\Mdef{\tK}{\tilde{K}}
\Mdef{\tL}{\tilde{L}}
\Mdef{\tM}{\tilde{M}}
\Mdef{\tN}{\tilde{N}}
\Mdef{\tP}{\tilde{P}}

\Mdef{\Ab}{\overline{A}}
\Mdef{\Bb}{\overline{B}}
\Mdef{\Cb}{\overline{C}}
\Mdef{\Db}{\overline{D}}
\Mdef{\Eb}{\overline{E}}
\Mdef{\Fb}{\overline{F}}
\Mdef{\Gb}{\overline{G}}
\Mdef{\Hb}{\overline{H}}
\Mdef{\Ib}{\overline{I}}
\Mdef{\Jb}{\overline{J}}
\Mdef{\Kb}{\overline{K}}
\Mdef{\Lb}{\overline{L}}
\Mdef{\Mb}{\overline{M}}
\Mdef{\Nb}{\overline{N}}
\Mdef{\Ob}{\overline{O}}
\Mdef{\Pb}{\overline{P}}
\Mdef{\Qb}{\overline{Q}}
\Mdef{\Rb}{\overline{R}}
\Mdef{\Sb}{\overline{S}}
\Mdef{\Tb}{\overline{T}}
\Mdef{\Ub}{\overline{U}}
\Mdef{\Vb}{\overline{V}}
\Mdef{\Wb}{\overline{W}}
\Mdef{\Xb}{\overline{X}}
\Mdef{\Yb}{\overline{Y}}
\Mdef{\Zb}{\overline{Z}}


\newcommand{\co}{\colon}

\def\endash{\mathchar"2D}

\newcommand{\leftmod}{\endash \textnormal{mod}}

\newcommand{\lra}{\longrightarrow}

\Mdef{\id}{\mathrm{Id}}

\newcommand{\adjunction}[4]{
\xymatrix{
#1:#2 \ar@<0.7ex>[r] &
\ar@<0.7ex>[l] #3:#4
}}


\Mdef{\bhom}{\mathbf{\hat{H}om}}

\Mdef{\Mod}{\mathrm{mod}}


\newcommand{\ilim}{\mathop{ \mathrm{lim }} }
\newcommand{\colim}{\mathop{  \mathrm{colim }} }


\DeclareMathOperator{\mackeyfunctor}{Mackey}
\DeclareMathOperator{\sheaffunctor}{Sheaf}

\newcommand{\weylsheaf}[1]{\textnormal{Weyl} \endash #1 \endash \textnormal{sheaf}_{\bQ}(\sub #1)}

\newcommand{\Reqsheaf}[3]{#1 \endash \textnormal{Sheaf}_{#3}(#2)}

\newcommand{\Reqpresheaf}[3]{#1 \endash \textnormal{PreSheaf}_{#2}(#3)}
\newcommand{\Rweylsheaf}[2]{\textnormal{Weyl} \endash #1 \endash \textnormal{sheaf}_{#2}(\sub #1)}

\newcommand{\seteqsheaf}[2]{#1 \endash \textnormal{Sheaf}(#2)}
\newcommand{\seteqpresheaf}[2]{#1 \endash \textnormal{PreSheaf}(#2)}

\newcommand{\setsheaf}[1]{\textnormal{Sheaf}(#1)}

\newcommand{\sheafify}{\ell}

\newcommand{\adic}[1]{\bZ_{#1}^{\wedge}}

\newcommand{\mackey}[1]{\textnormal{Mackey}_{\bQ}(#1)}

\newcommand{\sub}{\mathcal{S}}

\newcommand{\sets}{\textnormal{Sets}}

\newcommand{\Gmoddisc}[2]{#1_d[#2] \leftmod}
\newcommand{\Gmod}[2]{#1[#2] \leftmod}

\newcommand{\disc}{\textnormal{disc}}

\newcommand{\inc}{\textnormal{inc}}

\newcommand{\opensub}{\underset{\textnormal{open}}{\leqslant}}

\newcommand{\opennormalsub}{\underset{\textnormal{open}}{\trianglelefteqslant}}

\DeclareMathOperator{\stab}{\textnormal{stab}}

\DeclareMathOperator{\coker}{\textnormal{Coker}}

\DeclareMathOperator{\const}{\textnormal{Const}}

\DeclareMathOperator{\core}{\textnormal{Core}}
\DeclareMathOperator{\Weyl}{\textnormal{Weyl}}

\title{Equivariant sheaves for profinite groups}
\author[Barnes]{David Barnes}
\address[Barnes]{Mathematical Sciences Research Centre, Queen's University Belfast}
\author[Sugrue]{Danny Sugrue}
\address[Sugrue]{Mathematical Sciences Research Centre, Queen's University Belfast}

\thanks{The authors extend their thanks to Scott Balchin and Tobias Barthel for 
many useful conversations and a careful reading of a draft. 
The second author gratefully acknowledges support from the
Engineering and Physical Sciences Research Council under Grant 1631308.
}

\begin{document}
\begin{abstract}
We study equivariant sheaves over profinite spaces, where the group is also taken to be profinite.  
We resolve a serious deficit in the existing theory by constructing a good notion of 
equivariant presheaves, with a suitable equivariant sheafification functor. 
Using equivariant sheafification, we develop the general theory of equivariant sheaves of modules over a ring, 
give explicit constructions of infinite products and introduce an equivariant analogue of skyscraper sheaves. 

These results underlie recent work by the authors 
which proves that there is an algebraic model for rational $G$-spectra in terms of 
equivariant sheaves over profinite spaces. 
That model is constructed in terms of 
Weyl-$G$-sheaves over the space of closed subgroups of $G$, 
where the term Weyl indicates that the stalk over $H$
is $H$-fixed.   
In this paper, we prove that Weyl-$G$-sheaves of $R$-modules 
form an abelian category with enough injectives
and is a coreflective subcategory of equivariant sheaves of $R$-modules. 

We end the paper with a structural result that provides another way to conveniently build
equivariant sheaves from simpler data. 
We prove that a $G$-equivariant sheaf over a profinite base space $X$
is a colimit of equivariant sheaves over 
finite discrete spaces $X_i$ with actions of finite groups $G_i$, where
$X$ is the limit of the $X_i$ and $G$ is the limit of the $G_i$. 
\end{abstract}

\maketitle

\setcounter{tocdepth}{1}
\tableofcontents

\section{Introduction}
Sheaves of modules over a ring combine algebra and topology
in a manner that has been highly useful to algebraic topology and 
algebraic geometry.  
Adding the action of a topological group $G$ is a natural step
and creates a richer structure. For example, 
each stalk has an action of the corresponding stabiliser group
(similar to equivariant vector bundles).

The first difficulty in working with equivariant sheaves is the lack of a theory of 
equivariant presheaves for infinite topological groups.
Both definitions of an equivariant sheaf 
(Definitions \ref{defn:cocyclesheaf} and \ref{defn:eqsheaf})
use the sheaf space to ensure that the group action is continuous. 
Since a presheaf does not have a topological structure, there is no
way to incorporate a non-trivial topology on the group.
This is a serious deficit. Without a good theory or presheaves and sheafification
in the equivariant setting, it is hard to replicate 
many of the standard constructions of sheaf theory
(such as infinite coproducts or push forwards) 
and it is more difficult to construct examples.

In this paper we focus on the case where the group and base space are profinite.
We use the interaction between these two conditions to 
give a definition of equivariant presheaves and equivariant sheafification.
We build on this to develop the theory of equivariant sheaves,
construct adjunctions with other categories 
(each such adjunction gives a class of examples of equivariant sheaves) 
and give explicit constructions of infinite products.

Though equivariant sheaves have been studied before, that literature 
takes a very different direction to our work. For example, in the book of 
Bernstein and Lunts \cite{bernlunts}, 
the authors want to make
a derived category of sheaves over a free replacement of the base space.
This is far too restrictive for our purposes. Indeed, 
$\sub G$  --- the space of closed subgroups of $G$, under conjugation by $G$ --- 
is the base space of greatest interest for our applications and 
will always have fixed points.

A summary of the major results is given in Subsection \ref{subsec:mainresults}.
The original motivation for this work comes from the question of finding algebraic models
for rational $G$-spectra, where $G$ is a profinite group, see 
Subsection \ref{subsec:motivation} for further details. 
This paper is a combination of results from the PhD thesis of the second author 
(which was produced under supervision of the first) and substantial further additions.

\subsection{Summary of main results}\label{subsec:mainresults}
To produce a good notion of $G$-equivariant presheaves, we replace the
topological condition that $G$ acts continuously on the sheaf space with 
a property that can be expressed (and hence verified) algebraically. 
The starting point is Proposition \ref{prop:compactdiscrete}.
This proposition states that for a profinite group $G$, 
a section of a $G$-equivariant sheaf over a profinite base space $X$ can be restricted to 
an $N$-equivariant section, for some open subgroup $N$ of $G$. 
We describe this as saying that ``every section is locally sub-equivariant''
and have the following useful rephrasing. 

\begin{xxcor}[Corollary \ref{cor:sectionquot}]
Let $F$ be a $G$-equivariant sheaf of sets over a profinite $G$-space $X$.
For $U$ a compact open subset of $X$, the set of sections
$F(U)$ is a discrete $\stab_G(U)$-set.
\end{xxcor}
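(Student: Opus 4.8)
The plan is to obtain this as a direct consequence of Proposition \ref{prop:compactdiscrete} together with the compactness of $U$ and the fact that a compact open subset of a profinite space is again profinite, hence has a lattice of clopen subsets closed under finite Boolean operations.

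First I would fix the $\stab_G(U)$-action on $F(U)$: for $g\in\stab_G(U)$ the equivariant structure of $F$ provides an isomorphism $g\colon F(U)\to F(gU)=F(U)$, and the compatibility (cocycle) conditions in Definition \ref{defn:eqsheaf} assemble these into an action of $\stab_G(U)$ on the set $F(U)$. Recall that $F(U)$ is a \emph{discrete} $\stab_G(U)$-set precisely when $\stab_{\stab_G(U)}(s)$ is open in $\stab_G(U)$ for every $s\in F(U)$ (equivalently $F(U)=\bigcup_N F(U)^N$ with $N$ ranging over the open subgroups of $\stab_G(U)$). Since a subgroup of $G$ containing an open subgroup is itself open, it is enough to produce, for each fixed $s\in F(U)$, an open subgroup $N_0\le G$ with $N_0\le\stab_G(U)$ and $g\cdot s=s$ for all $g\in N_0$.

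Next I would apply Proposition \ref{prop:compactdiscrete} in its local form (``every section is locally sub-equivariant''): for each $x\in U$ there is a compact open $V_x\subseteq U$ containing $x$ and an open subgroup $N_x\le\stab_G(V_x)$ with $s|_{V_x}$ fixed by $N_x$. Compactness of $U$ gives a finite subcover $V_{x_1},\dots,V_{x_n}$, and passing to the nonempty atoms of the finite Boolean algebra of clopen sets they generate yields a partition $U=\bigsqcup_{i=1}^{n}W_i$ into compact opens with each $W_i$ contained in some $V_{x_{j(i)}}$. A short computation — using that $\stab_G(W_i)$ is open (a tube-lemma argument produces an open subgroup $N$ with $NW_i\subseteq W_i$, hence $NW_i=W_i$) and that restriction in $F$ intertwines the equivariant isomorphisms — shows $s|_{W_i}$ is fixed by the open subgroup $M_i:=N_{x_{j(i)}}\cap\stab_G(W_i)\le\stab_G(W_i)$. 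I then set $N_0:=\bigcap_{i=1}^{n}M_i$, an open subgroup of $G$ as a finite intersection of open subgroups.

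Finally I would check $N_0$ works. Each $g\in N_0$ stabilises every $W_i$, so $gU=\bigsqcup_i gW_i=U$ and thus $N_0\le\stab_G(U)$; moreover, compatibility of the equivariant isomorphisms with restriction gives $(g\cdot s)|_{W_i}=g\cdot(s|_{g^{-1}W_i})=g\cdot(s|_{W_i})=s|_{W_i}$ for every $i$, the last step because $g\in M_i$ fixes $s|_{W_i}$. As $\{W_i\}$ covers $U$ and $F$ is a sheaf, separatedness forces $g\cdot s=s$, so $N_0\le\stab_{\stab_G(U)}(s)$ and $s$ was arbitrary. The one genuine subtlety — and the reason the partition is introduced — is that an element of $\stab_G(U)$ need not permute a chosen finite cover of $U$; refining to a partition and then shrinking the fixing subgroups so that they stabilise every piece removes this, after which the conclusion is a single appeal to separatedness.
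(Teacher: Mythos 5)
Your proof is correct, but it is longer than the paper's and takes a genuinely different route. The paper simply observes that since $U$ is itself compact and open, the argument of Proposition \ref{prop:compactdiscrete} applies with $V=U$: Lemma \ref{lem:restrictequi} applied once to $U$ and once to $s(U)$ produces a single open normal subgroup $N$ of $G$ with $NU=U$ and $n\ast s=s$ for all $n\in N$, and openness of $N$ finishes the proof in one line. You instead read Proposition \ref{prop:compactdiscrete} literally as a local statement, cover $U$ by finitely many compact opens on which $s$ is sub-equivariant, refine to a clopen partition, intersect the fixing subgroups, and glue via separatedness. This is valid, and it has the virtue of only using the proposition as stated (the paper's one-line deduction quietly uses the stronger conclusion available from the proposition's \emph{proof} when $U$ is compact open, not just from its statement). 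Two small remarks: the partition into Boolean atoms is not actually needed — since each $N_{x_j}$ already stabilises $V_{x_j}$, any $g\in\bigcap_j N_{x_j}$ satisfies $gU=\bigcup_j gV_{x_j}=U$ and fixes each $s|_{V_{x_j}}$, so separatedness applies to the original cover — and the cocycle conditions you invoke live in Definition \ref{defn:cocyclesheaf} rather than Definition \ref{defn:eqsheaf}.
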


A related result can be found in work of Schneider, see \cite[Statement (3)]{schneider98}.
The discrete action encodes the continuity of the stabiliser action 
on sections. As the following theorem shows, it is sufficient to 
ensure that associated sheaf space has a continuous action of a profinite group $G$. 

As $X$ is profinite, the set $\mcB$ of compact-open sets of $X$ is a basis for 
the topology on $X$. Following Definitions \ref{def:probasisdiagram} 
and \ref {def:gpresheaf},
let $\seteqpresheaf{G}{\mcB}$ be the category of $G$-equivariant presheaves over $\mcB$.

\begin{xxthm}[Theorems \ref{thm:equivariantsheafspace} and \ref{thm:equisheafifyfunctor}]
There is a notion of an equivariant presheaf and a 
construction of equivariant sheafification which is left adjoint to the forgetful functor 
\[
\adjunction{\sheafify}{\seteqpresheaf{G}{\mcB}}{\seteqsheaf{G}{X}}{\textnormal{forget}.}
\]
The functor $\textnormal{forget} \circ \sheafify$ is idempotent, hence the forgetful functor is fully faithful.
\end{xxthm}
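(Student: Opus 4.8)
The plan is to produce equivariant sheafification via an equivariant refinement of the classical \'etal\'e-space (sheaf space) construction, and then to deduce the adjunction, the idempotency and the full faithfulness by formal arguments.

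Recall from Definitions~\ref{def:probasisdiagram} and~\ref{def:gpresheaf} that an object $F$ of $\seteqpresheaf{G}{\mcB}$ assigns to each $U \in \mcB$ a set $F(U)$, equipped with restriction maps and bijections $g \colon F(U) \to F(gU)$ satisfying the evident cocycle identities, subject to the condition that each $F(U)$ is a discrete $\stab_G(U)$-set. Given such an $F$, I form the stalks $F_x = \colim_{x \in U \in \mcB} F(U)$ and the total space $LF = \bigsqcup_{x \in X} F_x$ with its projection $p \colon LF \to X$. The group $G$ acts on $LF$ by $g \cdot s_x = (g s)_{g x}$ for $s \in F(U)$ and $x \in U$; this is well defined by the cocycle conditions and lies over the action on $X$. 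I topologise $LF$ using the usual basis of germ-sets $[s, U] = \{\, s_x : x \in U \,\}$ for $U \in \mcB$ and $s \in F(U)$, so that $p$ is a local homeomorphism, and I let $\sheafify F$ be the presheaf of continuous sections of $p$ over the compact-open subsets, with restriction maps and $G$-action inherited from $LF$.

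The crux, and the step I expect to be the main obstacle, is to show that the induced map $G \times LF \to LF$ is continuous; this is exactly where the profiniteness of $G$ and $X$ and the discreteness hypothesis on equivariant presheaves are used. Given $(g, s_x)$ and a basic open neighbourhood of $g \cdot s_x = (g s)_{g x}$, one may assume the representing set $U$ lies in $\mcB$, so that $\stab_G(U)$ is an open subgroup of $G$; by the defining discreteness condition on $F$ the section $s$ is fixed by some open subgroup $N \leqslant \stab_G(U)$, i.e.\ $s$ is ``locally sub-equivariant''. Then $gN \times [s, U]$ is an open neighbourhood of $(g, s_x)$ in $G \times LF$, and for $n \in N$ and $y \in U$ one computes $(gn) \cdot s_y = (g n s)_{g n y} = (g s)_{g y} \in [g s, g U]$, since $ns = s$ and $ny \in U$; matching this basic open set against the chosen neighbourhood of $(g s)_{g x}$ yields continuity. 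Granting this, Theorem~\ref{thm:equivariantsheafspace} identifies $p \colon LF \to X$ as the sheaf space of a $G$-equivariant sheaf, and Corollary~\ref{cor:sectionquot} shows its sections over compact-open sets again form discrete stabiliser-sets, so $\sheafify F$ is a genuine object of $\seteqsheaf{G}{X}$.

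It remains to assemble the adjunction and the formal consequences. Functoriality of $\sheafify$ is routine: a morphism of equivariant presheaves induces a map on stalks, hence a $G$-equivariant continuous map of sheaf spaces over $X$, hence a morphism of equivariant sheaves of sections. The unit $\eta_F \colon F \to \mathrm{forget}(\sheafify F)$ sends $s \in F(U)$ to the section $x \mapsto s_x$, and is $G$-equivariant by construction of the action on $LF$. To prove $\sheafify \dashv \mathrm{forget}$ I would, for each equivariant sheaf $\mcG$ with sheaf space $E\mcG$, exhibit mutually inverse natural bijections between $\Hom_{\seteqsheaf{G}{X}}(\sheafify F, \mcG)$ and $\Hom_{\seteqpresheaf{G}{\mcB}}(F, \mathrm{forget}\,\mcG)$: a presheaf morphism $F \to \mathrm{forget}\,\mcG$ induces a $G$-map $LF \to E\mcG$ over $X$ on stalks, continuous by the argument above, while restricting a sheaf morphism along $\eta_F$ gives back a presheaf morphism, the two assignments being inverse. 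Finally, for $\mcG$ already a sheaf the stalks of its presheaf of sections are canonically the stalks of $E\mcG$, so $L(\mathrm{forget}\,\mcG) \cong E\mcG$ as $G$-spaces over $X$, whence the counit $\sheafify(\mathrm{forget}\,\mcG) \to \mcG$ is an isomorphism. As the counit of the adjunction is a natural isomorphism, $\mathrm{forget}$ is fully faithful; and $\mathrm{forget} \circ \sheafify$ is idempotent because applying $\sheafify$ to $\eta_F$ and invoking a triangle identity shows $\sheafify \eta_F$ is an isomorphism, hence so is $\mathrm{forget}(\sheafify \eta_F)$, which is $\eta_{\mathrm{forget}(\sheafify F)}$ up to the counit isomorphism --- precisely the statement that the endofunctor $\mathrm{forget} \circ \sheafify$ applied twice agrees, naturally, with itself applied once.
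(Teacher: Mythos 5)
Your construction is correct and coincides with the paper's: the same sheaf space $\coprod_{x \in X} F_x$ with its basis of germ-sets, the same $G$-action on germs, and the same continuity argument, namely shrinking the representing section to a compact-open set $V$ in the basis and using that $\stab_G(V)$ is open together with the discreteness hypothesis on $F(V)$ to produce an open coset of $g$ carrying a basic neighbourhood into the target one. The formal parts you spell out (the unit, the hom-set bijection, the counit isomorphism giving full faithfulness and idempotency) are exactly what the paper compresses into ``the other statements follow from the non-equivariant case.''
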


For $R$ a ring, these results extend to $G$-sheaves of (left) $R$-modules and can be used to 
show that we have an abelian category and give explicit constructions of 
(small) colimits and limits, including infinite products.

\begin{xxthm}[Theorem \ref{Gsheafabelian} and Proposition \ref{prop:infproduct}]
The category of $G$-equivariant sheaves of $R$-modules over a profinite $G$-space $X$ is an abelian category
with all small limits and colimits. 
\end{xxthm}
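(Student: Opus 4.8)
The plan is to build everything up from the non-equivariant case by carefully checking that the $\stab_G(U)$-actions are compatible with the algebraic structure, then to import local (co)limits from the sheaf-space picture and compare with the presheaf-level constructions via the adjunction of the previous theorem.

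\medskip

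First I would establish that $\seteqsheaf{G}{X}$ with $R$-module structure is additive: the zero sheaf is the constant sheaf on the trivial module (which is trivially $G$-equivariant), and for two $G$-equivariant sheaves $F, F'$ the pointwise direct sum of sheaf spaces carries the diagonal $G$-action and is again a $G$-equivariant sheaf, serving as biproduct. Kernels and cokernels are the delicate point, so I would handle them next. For a map $\phi\co F \to F'$ of $G$-equivariant sheaves of $R$-modules, the underlying sheaf-theoretic kernel $\ker\phi$ is computed stalkwise and inherits a $G$-action from $F$; since the $G$-action on the sheaf space of $F$ is continuous, its restriction to the closed subspace $\ker\phi$ is continuous, so $\ker\phi$ is a $G$-equivariant sheaf and is readily checked to be the categorical kernel. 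The cokernel is where I expect the real work: the naive pointwise cokernel of sheaf spaces need not be a sheaf, so one forms the presheaf cokernel $U \mapsto \coker(F(U) \to F'(U))$ and sheafifies. Here the equivariant enhancement of the previous theorem is essential — I would observe that by Corollary \ref{cor:sectionquot} each $F'(U)$ is a discrete $\stab_G(U)$-set (indeed $R[\stab_G(U)]$-module after the module upgrade), so the presheaf cokernel is a $G$-equivariant presheaf in the sense of Definitions \ref{def:probasisdiagram} and \ref{def:gpresheaf}, and then $\sheafify$ of it is a $G$-equivariant sheaf. Checking it is the categorical cokernel reduces, via the adjunction $(\sheafify, \textnormal{forget})$ and full faithfulness of \textnormal{forget}, to the corresponding (easy) universal property at the presheaf level. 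Finally, one verifies that for $G$-equivariant sheaves the natural map from coimage to image is an isomorphism; this is true stalkwise in the non-equivariant category and the $G$-actions on both sides agree by construction, so the abelian axiom follows.

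\medskip

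For all small limits and colimits, I would again split into the part that is formal once kernels and cokernels exist and the part that needs a genuine construction. Having an abelian category, finite limits and colimits come for free. For arbitrary small products I would invoke Proposition \ref{prop:infproduct} directly as stated. For arbitrary small coproducts, the plan is: take the disjoint union of the sheaf spaces $\coprod_i \scrE_i$ over $X$ with its natural $G$-action — this is the coproduct in non-equivariant $G$-sheaves, but one must check the topology makes the $G$-action continuous, which follows since each $\scrE_i \to X$ is a local homeomorphism and $G$ acts continuously on each piece — actually more carefully the espace étalé of the coproduct is built so local sections come from finite subsets of the index set, so continuity of the action is local and reduces to the finite case already covered. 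Equivalently, and perhaps more cleanly, I would form the coproduct presheaf $U \mapsto \bigoplus_i \scrE_i(U)$, note it is a $G$-equivariant presheaf because a direct sum of discrete $\stab_G(U)$-modules is again one, and sheafify; its universal property as a coproduct again transports through the adjunction. General small colimits are then cokernels of maps between coproducts, and general small limits are kernels of maps between products, completing the verification.

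\medskip

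The main obstacle, as indicated, is the cokernel construction and with it the verification that coimage equals image — equivalently, that sheafification is exact in the equivariant setting. The key technical input resolving it is precisely the ``every section is locally sub-equivariant'' principle (Proposition \ref{prop:compactdiscrete} / Corollary \ref{cor:sectionquot}): it guarantees that all the presheaves arising as pointwise (co)kernels or direct sums are again \emph{equivariant} presheaves, so that the equivariant sheafification functor of the previous theorem applies and its left adjointness delivers the universal properties cheaply. Without that input one would be stuck trying to put a continuous $G$-action on an espace étalé by hand; with it, the abelian and (co)completeness axioms reduce to their classical counterparts plus routine compatibility checks on the $\stab_G(U)$-actions, which I would carry out but not belabour.
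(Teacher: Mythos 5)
Your treatment of the additive structure, kernels, cokernels, coproducts and general (co)limits is essentially the paper's own route: presheaf-level constructions made equivariant via Corollary \ref{cor:sectionquot}, followed by equivariant sheafification, with the universal properties transported through the $(\sheafify,\textnormal{forget})$ adjunction. That part is sound (one small slip: the kernel is an \emph{open}, not closed, subspace of the sheaf space, since the zero section of an \'etale space is open --- but continuity of the restricted action holds for any invariant subspace, so nothing breaks).

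The genuine gap is the infinite product. You write that you would ``invoke Proposition \ref{prop:infproduct} directly as stated,'' but that proposition is part of the statement you are being asked to prove, so this is circular; and the product is precisely where your general strategy needs real modification. You correctly observe that a direct sum of discrete $\stab_G(U)$-modules is again discrete (finite support), which is why coproducts go through verbatim. The analogous claim for products is \emph{false}: the termwise product $\prod_i F_i(U)$ of discrete $\stab_G(U)$-modules need not be discrete (e.g.\ $\prod_{n}\bQ[\bZ/p^n]$ over $G=\adic{p}$), so the naive termwise product is not even an equivariant presheaf. One must instead take $\disc\prod_i F_i(U)$, and then two nontrivial verifications remain before sheafifying: first, that maps $E\to F_i$ of equivariant sheaves factor through the discrete part (this does follow from Corollary \ref{cor:sectionquot} applied to $E(U)$); second, and more delicately, that the modified presheaf $U\mapsto\disc\prod_i F_i(U)$ still satisfies the sheaf condition on the compact-open basis --- the paper's Lemma \ref{lem:infprodalreadysheaf}, which uses compactness of basis elements to reduce any cover to a finite subcover so that the finite limit commutes with the filtered colimit defining $\disc$. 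Without that lemma, sheafification could change the sections over basis elements and the projection maps would no longer recover the given $f_i$, so the universal property of the product would not follow. Your proposal does not engage with either point.
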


In Section \ref{sec:pbpf} we use our results on equivariant presheaves 
to construct the change of base space adjunction (pull backs and push forwards).
The push forward functor allows us to construct an equivariant 
version of a skyscraper sheaf in Example \ref{Gskyscraper}. 
Such a sheaf has non-zero stalks over exactly one orbit of the base space $X$ 
(recall that in equivariant contexts, orbits take the place of points). 
This construction is adjoint to the functor which takes a stalk at a point of $X$.
The class of equivariant skyscrapers are essentially the only example one needs, 
as the following makes clear.

\begin{xxthm}[Theorem \ref{thm:enoughinj}]
The injective equivariant skyscraper sheaves provide enough injectives for 
the category of $G$-equivariant sheaves of $R$-modules over $X$.
\end{xxthm}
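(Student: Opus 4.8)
The plan is to transport the classical argument that sheaves of $R$-modules on a space have enough injectives to the equivariant setting, with the $G$-orbits of $X$ playing the role of points. Two ingredients are needed. The first is the adjunction recorded in Example~\ref{Gskyscraper}: for each $x\in X$ the stalk functor $F\mapsto F_x$, from $G$-equivariant sheaves of $R$-modules over $X$ to discrete $R[\stab_G(x)]$-modules, has the equivariant skyscraper $\sky_x$ as a right adjoint. The second is that for any profinite group $H$ the category of discrete $R[H]$-modules has enough injectives. Granting these, an \emph{injective equivariant skyscraper sheaf} is by definition one of the form $\sky_x(I)$ with $I$ an injective discrete $R[\stab_G(x)]$-module, so the theorem reduces to embedding an arbitrary $F$ monomorphically into a product of such sheaves.

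First I would check that $\sky_x$ sends injectives to injectives. The functor $F\mapsto F_x$ is exact: by Theorem~\ref{Gsheafabelian} the kernel of a map of $G$-sheaves is the presheaf kernel and the cokernel is the sheafification of the presheaf cokernel, while stalks, being filtered colimits over the compact open neighbourhoods of $x$, commute with finite limits and with sheafification. An exact left adjoint forces its right adjoint to preserve injectives, so $\sky_x(I)$ is injective in $G$-equivariant sheaves of $R$-modules whenever $I$ is an injective discrete $R[\stab_G(x)]$-module.

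Next I would establish enough injectives for discrete $R[H]$-modules, $H$ profinite. The forgetful functor to $R$-modules is exact and has a right adjoint, the smooth coinduction $\mathrm{Coind}(A)=\colim_N\mathrm{Map}(H/N,A)$ with the colimit taken over the open normal subgroups $N$ of $H$; its unit $M\to\mathrm{Coind}(M)$, $m\mapsto(h\mapsto hm)$, is injective because evaluation at the identity recovers $m$. Thus $\mathrm{Coind}$ takes injective $R$-modules to injective discrete $R[H]$-modules, and composing the unit with $\mathrm{Coind}$ of an embedding of the underlying $R$-module into an injective one embeds $M$ into an injective discrete $R[H]$-module. Now fix $F$ and a set $\Lambda\subseteq X$ of representatives for the $G$-orbits on $X$. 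For each $x\in\Lambda$ choose a monomorphism $\iota_x\colon F_x\hookrightarrow I_x$ into an injective discrete $R[\stab_G(x)]$-module and let $\phi_x\colon F\to\sky_x(I_x)$ be its adjunct; write $P=\prod_{x\in\Lambda}\sky_x(I_x)$ and $\phi=(\phi_x)_{x\in\Lambda}\colon F\to P$. The object $P$ is injective, since a small product of injectives is injective in the abelian category of Theorem~\ref{Gsheafabelian}. It remains to see $\phi$ is a monomorphism. A map of $G$-sheaves is a monomorphism precisely when it is injective on every stalk, and the $G$-action identifies the stalk at $gx_0$ with that at $x_0$ up to isomorphism, so it suffices to show the induced map $F_{x_0}\to P_{x_0}$ is injective for each $x_0\in\Lambda$. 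Composing it with the stalk at $x_0$ of the projection $P\to\sky_{x_0}(I_{x_0})$ and then with the counit $(\sky_{x_0}(I_{x_0}))_{x_0}\to I_{x_0}$, the triangle identity for the adjunction identifies the resulting composite $F_{x_0}\to I_{x_0}$ with $\iota_{x_0}$; since $\iota_{x_0}$ is a monomorphism, so is $F_{x_0}\to P_{x_0}$. Hence $\phi$ embeds $F$ into a product of injective equivariant skyscraper sheaves, as required.

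The main obstacle I anticipate is the second ingredient together with its prerequisite that the stalk $F_x$ genuinely carries a natural \emph{discrete} action of $\stab_G(x)$, so that smooth coinduction is valued in the correct category and has a monomorphic unit. This is where the profiniteness of both $G$ and $X$ is decisive: the discreteness of the $\stab_G(x)$-action on $F_x$ follows from the compactness of $\stab_G(x)$ (a closed subgroup of $G$) together with the continuity of the $G$-action on the sheaf space furnished by Theorem~\ref{thm:equivariantsheafspace}, which is itself built on Corollary~\ref{cor:sectionquot} and the hypothesis that $X$ is profinite. With this, the skyscraper adjunction of Example~\ref{Gskyscraper}, and the abelian structure of Theorem~\ref{Gsheafabelian} all in hand, the remainder of the argument is formal adjunction-chasing.
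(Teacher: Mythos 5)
Your proof is correct and follows essentially the same route as the paper's: embed each stalk $E_{x_O}$ at an orbit representative into an injective discrete $R[\stab_G(x_O)]$-module, pass to the adjoint map into the corresponding equivariant skyscraper (injective by Proposition~\ref{equivinjshf} since the stalk functor preserves monomorphisms), and take the sheaf product over orbits, checking monomorphy stalkwise via the $G$-action and the triangle identity. The only divergence is that you re-derive the existence of enough injective discrete $R[H]$-modules via smooth coinduction, whereas the paper's Lemma~\ref{lem:rgmodinjectives} instead applies the discretisation functor to an injective $R[H]$-module; both work.
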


We now apply these results to study Weyl-$G$-sheaves:
the category of $G$-equivariant 
sheaves of $R$-modules over $\sub G$ (the space of closed subgroups of $G$) 
such that the stalk over $K$ is $K$-fixed. 
This category was central to \cite{BSmackey}. 
We show that we can construct a Weyl-$G$-sheaf from a $G$-equivariant sheaf over $\sub G$
by taking stalk-wise fixed points.

\begin{xxthm}[Lemma \ref{lem:weyladjunction} and Theorem \ref{thm:weylabelian}]
There is an adjunction
\[
\adjunction{\inc}{\Rweylsheaf{G}{R}}{\Reqsheaf{G}{\sub G}{R}}{\Weyl.}
\]
The functor $\inc \circ \Weyl$ is idempotent, hence the inclusion functor is fully faithful.

The category of Weyl-$G$-sheaves of $R$-modules is an abelian category with all
small colimits and limits. 
Limits are constructed by taking the limit of the underlying diagram 
in $\Reqsheaf{G}{\sub G}{R}$ and applying the functor $\Weyl$.
\end{xxthm}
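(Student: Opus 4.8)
The plan is to first build $\Weyl$ explicitly, then deduce the adjunction and full faithfulness by a stalkwise argument, and finally read off the abelian structure by exhibiting $\Rweylsheaf{G}{R}$ as a suitably closed full subcategory of $\Reqsheaf{G}{\sub G}{R}$, which by Theorem~\ref{Gsheafabelian} is abelian with all small limits and colimits. For $F\in\Reqsheaf{G}{\sub G}{R}$ with sheaf space $p\colon E\to\sub G$, I would set $\Weyl(F)$ to be the sub-$G$-sheaf with total space $\{\,e\in E : k\cdot e=e \text{ for all } k\in p(e)\,\}$, so that the stalk of $\Weyl(F)$ over $K$ is the $R[\stab_G(K)]$-submodule $(F_K)^K$ of $F_K$; on morphisms $\Weyl$ is the restriction of a $G$-sheaf map to these subspaces, which is well defined because a $G$-sheaf map is $\stab_G(K)$-equivariant on the stalk over $K$ and hence carries $(F_K)^K$ into $(F'_K)^K$. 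The one point requiring real work is to verify that this subspace is open in $E$ (so that it is again \'etale over $\sub G$) and carries a continuous $G$-action: $G$-stability is immediate because $g$ sends a $K$-fixed germ to a $gKg^{-1}$-fixed germ, and openness is where I would invoke Corollary~\ref{cor:sectionquot}, since if every section is locally sub-equivariant then a $K$-fixed germ extends to a section over some compact-open $K$-stable neighbourhood of $K$ which may be taken $K$-equivariant, and such a section lands in the fixed-point locus. Granting this, $\Weyl(F)$ is a Weyl-$G$-sheaf by construction and $\Weyl\circ\inc=\id$, because the stalks of a Weyl-$G$-sheaf are already $K$-fixed.

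\textbf{The adjunction, idempotency and full faithfulness.} Writing $\iota_F\colon\inc\Weyl(F)\hookrightarrow F$ for the inclusion of the fixed-point sub-sheaf, I would check that
\[
\Hom_{\Rweylsheaf{G}{R}}(W,\Weyl F)\longrightarrow\Hom_{\Reqsheaf{G}{\sub G}{R}}(\inc W,F),\qquad \psi\longmapsto\iota_F\circ\inc\psi,
\]
is a bijection, natural in $W$ and $F$. For this, take $\phi\colon\inc W\to F$; for each $K$ the stalk map $\phi_K\colon W_K\to F_K$ is $\stab_G(K)$-equivariant, hence in particular $K$-equivariant, and since $K$ acts trivially on $W_K$ this forces $\phi_K(W_K)\subseteq(F_K)^K=\Weyl(F)_K$, so $\phi$ corestricts (continuously, as $\Weyl F$ carries the subspace topology) to a map $W\to\Weyl F$; this is the required preimage, and it is the unique such factorization since $\iota_F$ is a monomorphism. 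Hence $\inc\dashv\Weyl$. The unit $\id\to\Weyl\inc$ of this adjunction is the identity (as $\Weyl\inc=\id$), so the left adjoint $\inc$ is fully faithful; and $\inc\Weyl$ is idempotent since $\inc\Weyl\inc\Weyl=\inc(\Weyl\inc)\Weyl=\inc\Weyl$.

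\textbf{Abelian structure, colimits and limits.} First note that $\Rweylsheaf{G}{R}$ is a full additive subcategory of $\Reqsheaf{G}{\sub G}{R}$: it contains the zero sheaf, and finite biproducts of Weyl-$G$-sheaves are again Weyl-$G$-sheaves, because finite biproducts commute with stalks and a finite biproduct of modules with trivial $K$-action has trivial $K$-action. The key tool is that for each $K$ the stalk-at-$K$ functor on $\Reqsheaf{G}{\sub G}{R}$ is exact and preserves all small colimits (it is computed as a filtered colimit of sections, and it is a left adjoint). Hence, for a morphism $f$ of Weyl-$G$-sheaves, the kernel formed in $\Reqsheaf{G}{\sub G}{R}$ has stalks $(\ker f)_K=\ker(f_K)$, a submodule of $W_K$, and the cokernel has stalks $(\coker f)_K=\coker(f_K)$, a quotient of $W'_K$; both are therefore modules with trivial $K$-action, so both are Weyl-$G$-sheaves. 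Since a full additive subcategory of an abelian category closed under kernels, cokernels and finite biproducts is itself abelian with exact inclusion, $\Rweylsheaf{G}{R}$ is abelian. The same stalkwise computation shows that colimits of Weyl-$G$-sheaves formed in $\Reqsheaf{G}{\sub G}{R}$ are Weyl-$G$-sheaves (a colimit of modules with trivial $K$-action has trivial $K$-action), so $\Rweylsheaf{G}{R}$ has all small colimits, computed as in $\Reqsheaf{G}{\sub G}{R}$. For limits I would use the coreflection directly: for a diagram $\{W_i\}$ in $\Rweylsheaf{G}{R}$ and any test object $A$,
\[
\Hom_{\Rweylsheaf{G}{R}}(A,\Weyl(\lim_i\inc W_i))\cong\Hom_{\Reqsheaf{G}{\sub G}{R}}(\inc A,\lim_i\inc W_i)\cong\lim_i\Hom(A,W_i),
\]
the last isomorphism by full faithfulness of $\inc$; hence $\lim_i W_i\cong\Weyl(\lim_i\inc W_i)$, which is exactly the asserted recipe (equivalently, the right adjoint $\Weyl$ preserves these limits).

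\textbf{Main obstacle.} I expect the only non-formal step to be the construction of $\Weyl$ as a functor landing in $G$-sheaves --- concretely, the openness of the fixed-point locus in the sheaf space and the continuity of the restricted $G$-action --- which is precisely where the profiniteness of $G$ and of $\sub G$ enters, through Corollary~\ref{cor:sectionquot}. Everything afterwards is stalkwise bookkeeping together with standard facts about coreflective subcategories; the one subtlety worth watching is that ``the stalk commutes with finite limits'' and the compatibility of fixed points with the colimit defining a stalk both rely on exactness of filtered colimits of $R$-modules and on a continuous action of a profinite group on a discrete set factoring through a finite quotient.
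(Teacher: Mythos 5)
Your proposal follows essentially the same route as the paper: $\Weyl$ is the stalkwise fixed-point subsheaf whose local-homeomorphism property rests on local sub-equivariance of sections, the adjunction and full faithfulness are the formal stalkwise argument, colimits are computed in the ambient category and seen to remain Weyl on stalks, and limits are obtained by applying $\Weyl$ to the ambient limit (the paper phrases this last step exactly as the Hom-set computation you give, in the case of infinite products). One small correction to the step you rightly flag as the only non-formal one: a merely $K$-equivariant section over a $K$-stable neighbourhood of $K$ need not have $L$-fixed germs at the \emph{other} points $L$ of that neighbourhood, so it need not land in the fixed-point locus; what is needed --- and what Proposition \ref{prop:localsubeqweyl} delivers --- is an $NK$-equivariant section over $O(N,NK)$ for some open normal subgroup $N$, combined with the observation that every $L\in O(N,NK)$ satisfies $L\leqslant NL=NK$ and hence the section is $L$-equivariant with $L$-fixed germ at $L$ (this is Corollary \ref{cor:fixgermweylsection}).
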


In Section \ref{sec:diagramsheaves} we produce another method to construct 
$G$-equivariant sheaves over a profinite space $X$. The idea is to combine 
a compatible sequence of equivariant sheaves over finite discrete spaces.
See Theorem \ref{thm:diagramsheafcolim} for the general case and 
Corollary \ref{cor:weyldiagrams} for this result applied to the case of 
Weyl-$G$-sheaves. 
This method, alongside equivariant sheafification, 
shows how we can conveniently build equivariant sheaves
from simple algebraic information,
see Examples \ref{ex:groupring}, \ref{ex:groupring2}
and \ref{ex:fixedpoints}.

We end the paper with two Appendices recapping the definitions and basic results of
profinite groups, profinite spaces and sheaves. 

\subsection{Motivation}\label{subsec:motivation}

Profinite groups are a commonly encountered class of compact topological groups, 
appearing most often as Galois groups or when one has a diagram of finite groups. 
As a first example, the Morava stabilizer group $\mathbb{S}_n$ 
from chromatic homotopy theory is profinite.
Secondly, number theory makes substantial use of profinite groups, as seen in 
Bley and Boltje \cite{BB04}.
Thirdly, the \'{e}tale fundamental groups of algebraic geometry are profinite. 
This ubiquity drives our interest in rational $G$-spectra for profinite $G$.


Sheaves over profinite spaces occur in questions of algebraic models 
beyond the case of profinite groups.
Indeed, Greenlees' conjectured \cite{Gconjecture}
that for any compact Lie group $G$
there exists an algebraic model for rational $G$-spectra
built in terms of sheaves over  $\sub G$, the space of subgroups of $G$.  
The quotient space under the conjugation action, $\sub G/G$ 
is a profinite space by Lewis et al. \cite[Lemma V.2.8]{lms86}.
It follows that a good theory of $G$-sheaves over $\sub G$
and the related category of sheaves over $\sub G/G$
for $G$ a compact Lie group will be needed.
By developing the theory for profinite $G$, 
the current paper provides a guide for the 
compact Lie case. 

In \cite{BSmackey}, the authors construct an equivalence between the category of 
rational $G$-Mackey functors and Weyl-$G$-sheaves.
This required the definition of equivariant presheaves and skyscrapers.
In \cite{sugruethesis} and \cite{BSspectra}, this 
equivalence is used to prove the main theorem of this line of work:
a classification of rational $G$-spectra in terms of rational Weyl-$G$-sheaves. 
Such a classification is known as an algebraic model for rational $G$-spectra.
For an introduction to algebraic models see \cite{BKclassify} 
by the first author and K\k{e}dziorek.
The key points to take away are that an algebraic model allows one to construct 
objects and calculate maps in (the homotopy category of) rational $G$-spectra 
using the tools from (homological) algebra 
while avoiding the complexity of working in a topological setting.



\section{Equivariant sheaves}\label{sec:equivsheaves}

In this section we 
introduce the category of $G$-equivariant sheaves of sets. 
See Appendix \ref{Sec:sheavespresheaves} for a recap of sheaves and choice of terminology.
There are several equivalent definitions, we start by giving a version from 
Bernstein and Lunts \cite{bernlunts} in terms of pullback sheaves, see Definition \ref{defn:pullpushfunctors}. 
See also Scheiderer \cite[Chapter 8]{Scheiderer94}.

Let $G$ be a topological group and $X$ a topological space with a left $G$-action. 
We name several maps relating to this action
\[
\xymatrix{
G \times G \times X 
\ar@<+2pt>@/^10pt/[r]|-{e_0}
\ar[r]|-{e_1}
\ar@<-2pt>@/_10pt/[r]|-{e_2}
& 
G \times X 
\ar@<+2pt>@/^10pt/[r]|-{d_0}
\ar@<-2pt>@/_10pt/[r]|-{d_1}
& 
X
\ar[l]|-{s_0}
}
\]
\begin{align*}
e_0(g, h, x) &= (h, g^{-1} x) 
&
d_0(g,x) &= g^{-1} x \\
e_1(g, h, x) &= (gh, x)
&
d_1(g,x) &=  x \\
e_2(g, h, x) &= (g, x)
&
s_0(x) &= (e,x)
\end{align*}

\begin{definition}\label{defn:cocyclesheaf}
A $G$-\emph{equivariant sheaf} of sets over $X$ is a sheaf of sets $F$ over $X$ together with an isomorphism 
\[
\rho \co d_1^*(F)\lra d_0^*(F)
\]
such that the cocycle condition and the identity condition hold:
\[
e_1^* \rho = e_0^* \rho \circ e_2^* \rho
\quad \quad
s_0^* \rho = \id_F.
\]
A morphism $f \co F \to F'$ of $G$-equivariant sheaves of sets over $X$ is a map $f$ of sheaves 
of sets such that the following square commutes. 
\[
\xymatrix{
d_1^*(F)
\ar[d]^{\rho}\ar[r]^{d_1^*(f)} &
d_1^*(F')
\ar[d]^{\rho'}
\\ 
d_0^*(F)
\ar[r]^{d_0^*(f)}  &  
d_0^*(F')}
\]
We write $\seteqsheaf{G}{X}$ for this category and often shorten 
the name $G$-equivariant sheaf to $G$-\emph{sheaf}.
\end{definition}

For each pair $(g,x) \in G \times X$, we can calculate the values on the stalks (Definition \ref{def:sheafandstalk})
\[
d_1^*(F)_{(g,x)} =\left\lbrace (g,x,f)\mid f\in F_x \right\rbrace\cong F_x 
\quad \text{and} \quad 
d_0^*(F)_{(g,x)} =\left\lbrace (g,x,f)\mid f\in F_{g^{-1} x} \right\rbrace  \cong F_{g^{-1} x}.
\]
It follows that $\rho$ induces isomorphisms on stalks 
\begin{align*}
\rho_{(g,x)} \colon  F_x \cong d_1^*(F)_{(g,x)} \longrightarrow d_0^*(F)_{(g,x)} \cong F_{g^{-1}x}.
\end{align*} 
Taking the inverse gives maps $g \colon F_x \lra F_{gx}$ for any $g$ and $x$. 
The cocycle and identity conditions imply that these maps are compatible under composition, 
so that 
\[
F_x \overset{g}{\lra} F_{gx} \overset{g'}{\lra} F_{g'gx}
\]
is the same as the map $g'g$ and $e$ acts as the identity. 
Consequently, $F_x$ has an action of $\stab_G (x)$.

There is a second definition of $G$-equivariant sheaves, which is more focused 
on the local homeomorphism definition of a sheaf.

\begin{definition}\label{defn:eqsheaf}
A \emph{$G$-equivariant sheaf} of sets over a topological space $X$ is a 
$G$-equivariant local homeomorphism $p \co E \lra X$ 
of topological spaces with continuous $G$-actions.

We will write this as either the pair $(E,p)$ or simply as $E$.
A map $f \co (E,p) \lra (E',p')$ of $G$-equivariant sheaves of sets over $X$ is a 
$G$-equivariant map $f \co E \to E'$ such that $p'f=p$.  
\end{definition}

These two definitions of sheaves give equivalent categories. 
Just as one eventually ceases to distinguish between 
sheaves and sheaf spaces, one may freely use either definition
of an equivariant sheaf.

\begin{theorem}\label{thm:eqsheafequiv}
The category of equivariant sheaves of Definition \ref{defn:cocyclesheaf}
is equivalent to the category of equivariant sheaves of Definition \ref{defn:eqsheaf}.
\end{theorem}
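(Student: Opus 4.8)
The plan is to exhibit mutually inverse functors between the two categories, using as a backbone the classical equivalence (recalled in Appendix \ref{Sec:sheavespresheaves}) between sheaves of sets over $X$ and local homeomorphisms over $X$. Call that classical pair of functors $\Gamma$ (from sheaf spaces to sheaves, taking sections) and $L$ (from sheaves to sheaf spaces, the \'{e}tal\'{e} space). Since an object of either equivariant category consists of an underlying sheaf-theoretic object together with extra data encoding the $G$-action, the task is to check that the extra data transport faithfully across $\Gamma$ and $L$.

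First I would go from Definition \ref{defn:eqsheaf} to Definition \ref{defn:cocyclesheaf}. Given a $G$-equivariant local homeomorphism $p\co E \to X$, the continuous $G$-action $a\co G \times E \to E$ is itself a map over the two projections, and one observes that $G \times E$ is naturally the sheaf space of $d_1^* F$ while the ``twisted'' copy is the sheaf space of $d_0^* F$, where $F = \Gamma(E)$. Concretely, $d_1^* F$ has sheaf space $\{(g,x,e) : p(e) = x\} \cong G \times E$ and $d_0^* F$ has sheaf space $\{(g,x,e): p(e) = g^{-1} x\}$; the assignment $(g,x,e) \mapsto (g, gx, e)$ (using $p(ge) = g p(e)$) is a homeomorphism between them over $G \times X$, and hence defines an isomorphism of sheaves $\rho \co d_1^* F \to d_0^* F$. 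Associativity and unitality of the action $a$ translate directly into the cocycle identity $e_1^*\rho = e_0^*\rho \circ e_2^*\rho$ and the identity condition $s_0^* \rho = \id_F$ — this is most painlessly verified by evaluating on stalks, since $\rho_{(g,x)}$ is exactly the stalk map induced by $g^{-1}$, as computed in the paragraph following Definition \ref{defn:cocyclesheaf}. A morphism of equivariant sheaf spaces is $G$-equivariant, which is precisely the commuting square demanded of a morphism in $\seteqsheaf{G}{X}$, so we get a functor.

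Conversely, from Definition \ref{defn:cocyclesheaf} to Definition \ref{defn:eqsheaf}: given $(F,\rho)$, set $E = L(F)$ its \'{e}tal\'{e} space and define $a\co G \times E \to E$ on stalks by $a(g, -) = (\rho_{(g^{-1},gx)})^{-1} \co F_x \to F_{gx}$, i.e. the map called $g\co F_x \to F_{gx}$ in the excerpt. One must check this is continuous: it suffices to check it is continuous on each $\{g\} \times E$ and then use that $\rho$ is a morphism of sheaves over the varying parameter $g$, or more cleanly, to recognise $a$ as the composite $G \times L(F) \cong L(d_1^* F) \xrightarrow{L(\rho)} L(d_0^* F) \xrightarrow{\ \text{twist}\ } G \times L(F) \to L(F)$ of continuous maps, where the last arrow is projection to $E$ after the identification undoing the $d_0$-twist. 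The cocycle and identity conditions give associativity and unitality of $a$, and $p\co E \to X$ is $G$-equivariant by construction. On morphisms, the commuting square of Definition \ref{defn:cocyclesheaf} becomes $G$-equivariance of $L(f)$.

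Finally I would note that these two functors are inverse to each other: on underlying sheaves/sheaf spaces this is the classical equivalence $\Gamma L \simeq \id$, $L \Gamma \simeq \id$, and a quick check shows the natural isomorphisms are compatible with the $G$-action data, since at each stalk the action map in either description is literally the same map $g\co F_x \to F_{gx}$. The main obstacle — the only point requiring genuine care rather than bookkeeping — is the continuity of the reconstructed action $a\co G \times E \to E$ in the direction from Definition \ref{defn:cocyclesheaf} to Definition \ref{defn:eqsheaf}: pointwise one only gets a map of sets on each stalk, and one needs the sheaf-morphism structure of $\rho$ over the parameter space $G \times X$ (equivalently, the identification $G \times L(F) \cong L(d_1^* F)$ and functoriality of $L$ on the pullback $\rho$) to upgrade this to a continuous map of total spaces. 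Everything else is a routine translation between the cocycle formalism and the action formalism, cleanest to carry out stalkwise.
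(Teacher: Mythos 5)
Your argument is correct and is essentially the standard translation that the paper outsources to the cited references (Sugrue, Theorem 4.1.5, and K\k{e}dziorek, Lemma 7.0.3): identify $d_1^*F$ and $d_0^*F$ with the untwisted and twisted copies of $G\times E$, read off $\rho$ stalkwise as the action of $g^{-1}$, and observe that the cocycle and identity conditions are associativity and unitality, with the continuity of the reconstructed action $G\times E\to E$ being the one non-formal point --- which you correctly isolate and handle via $L(\rho)$. One small correction: the displayed assignment $(g,x,e)\mapsto(g,gx,e)$ is a homeomorphism of total spaces but does not lie over $G\times X$; the map of sheaves over $G\times X$ is $(g,x,e)\mapsto(g,x,g^{-1}e)$, which agrees with your subsequent (correct) stalkwise description of $\rho_{(g,x)}$ as induced by $g^{-1}$.
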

\begin{proof}
A proof in the case of sheaves of $\bQ$-modules can be found in 
work of the second author \cite[Theorem 4.1.5]{sugruethesis} and 
K{\k{e}}dziorek \cite[Lemma 7.0.3]{kedziorekthesis}. 
The core of the argument holds in the case of sheaves of sets.
\end{proof}

As non-equivariantly, we have a notion of sections of a sheaf. 

\begin{definition}
Let $p \co E \to X$ be a a $G$-sheaf and $U$ an open subset of $X$. 
Define $\Gamma(U,E)$ to be the set of (non-equivariant) sections $s \co U \lra E$.

Given $s \in \Gamma(U,E)$ and $g \in G$, define a section of $gU$
\[
g \ast s =  g \circ s \circ g^{-1} \co g U \lra E
\]
which sends $v=gu$ to $gs(u) = gs(g^{-1} v)$. 

Let $H$ be a subgroup of $G$ such that $U$ is invariant under the action of a subgroup $H$
(that is, $h U = U$ for all $h \in H$). 
We say that $s \in \Gamma(U,E)$ is \emph{$H$-equivariant} 
if $h \ast s=s$ for all $h \in H$. 
\end{definition}

Take a section $s$ over $U$ of a $G$-sheaf $p \co E \to X$, then for $x \in X$, $g \in G$
\[
g(s_x) = (g \ast s)_{gx}
\]
which is the key formula for relating the action of $G$ on sections and stalks.

Confirmation that we are correct in taking the sections to be non-equivariant is given by the following,
which follows from the proof of Theorem \ref{thm:eqsheafequiv}.

\begin{corollary}\label{cor:sectionsaresections}
Given a $G$-sheaf $F$ as in Definition \ref{defn:cocyclesheaf} we have a 
corresponding $G$-sheaf space $E$ from Definition \ref{defn:eqsheaf}. 
For $U$ an open subspace of $X$, there is a canonical bijection
\[
F(U) \cong \Gamma(U,E)
\]
and this bijection is natural in inclusions of subsets $U \to V$
and isomorphisms $g \co U \to gU$.
\end{corollary}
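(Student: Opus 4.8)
The plan is to unwind the explicit equivalence of categories that underlies Theorem \ref{thm:eqsheafequiv}. On underlying (non-equivariant) sheaves this is the classical correspondence recalled in Appendix \ref{Sec:sheavespresheaves}: the sheaf space is $E = \coprod_{x \in X} F_x$ with $p \co E \to X$ collapsing $F_x$ to $x$, topologised so that the images $\{ s_x : x \in U\}$ of sections $s \in F(U)$ form a basis. A section $s \in F(U)$ is sent to the continuous section $\sigma_s \co U \to E$, $u \mapsto s_u$, and conversely every continuous section of $p$ over $U$ is locally of this form, so glues to a unique element of $F(U)$. This is the canonical bijection $F(U) \cong \Gamma(U,E)$, and I would simply cite it.

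First I would dispatch naturality in inclusions. If $V \subseteq U$ then $\sigma_{s|_V} = \sigma_s|_V$, since germs are compatible with restriction, so the bijections intertwine the restriction maps $F(U) \to F(V)$ and $\Gamma(U,E) \to \Gamma(V,E)$. This is immediate.

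For the equivariant statement I would recall, from the proof of Theorem \ref{thm:eqsheafequiv}, how the continuous $G$-action on $E$ is assembled from the structure isomorphism $\rho$: on stalks the homeomorphism $g \co E \to E$ restricts to the map $g \co F_x \to F_{gx}$ extracted from $\rho$ in the discussion following Definition \ref{defn:cocyclesheaf}. Dually, pulling $\rho$ back along $u \mapsto (g^{-1},u)$ yields, for each $g \in G$, an isomorphism of sheaves over the relevant sets and hence a bijection $g \cdot (-) \co F(U) \to F(gU)$, and — again part of that proof — it satisfies $(g \cdot s)_{gx} = g(s_x)$ for all $x \in U$. Granting this, the computation closes: the section of $E$ over $gU$ corresponding to $g \cdot s$ sends $gx$ to $(g \cdot s)_{gx} = g(s_x) = g\bigl(\sigma_s(g^{-1}(gx))\bigr) = (g \ast \sigma_s)(gx)$, so $\sigma_{g \cdot s} = g \ast \sigma_s$, which is exactly naturality in the isomorphism $g \co U \to gU$.

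The only real work is the bookkeeping in the last paragraph: making precise the map $g \cdot (-) \co F(U) \to F(gU)$ in the pullback-sheaf language of Definition \ref{defn:cocyclesheaf} and checking that it agrees stalk-wise with the maps $g \co F_x \to F_{gx}$. I expect this to be the main (and only) obstacle, and since this compatibility is already implicit in the construction proving Theorem \ref{thm:eqsheafequiv}, I would present the argument as extracting naturality from that construction rather than redeveloping it.
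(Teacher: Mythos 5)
Your proposal is correct and takes essentially the same route as the paper, which simply asserts that the corollary ``follows from the proof of Theorem \ref{thm:eqsheafequiv}'': you unpack that assertion by citing the classical sheaf/sheaf-space correspondence for the bijection, and extracting the stalkwise compatibility $(g\cdot s)_{gx}=g(s_x)$ (which the paper records as ``the key formula'') from the construction underlying that theorem. Nothing is missing.
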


\begin{lemma}\label{lem:discretestalks}
Given a $G$-sheaf $(E,p)$ over $X$ and 
$x \in X$, the stalk $E_x$ is a discrete $\stab_G(x)$-set (in the sense of Definition \ref{def:modulediscrete}).
\end{lemma}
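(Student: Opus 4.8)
The plan is to show that for each point $e \in E_x$ the stabiliser $\stab_{\stab_G(x)}(e)$ is open in $\stab_G(x)$; since $\stab_G(x)$ is profinite (being a closed subgroup of the profinite group $G$), this is exactly what it means for $E_x$ to be a discrete $\stab_G(x)$-set. So fix $e \in E_x$. Since $p \co E \to X$ is a local homeomorphism, there is an open neighbourhood $W$ of $e$ in $E$ such that $p|_W \co W \to p(W)$ is a homeomorphism onto an open subset $U = p(W)$ of $X$ containing $x$. Let $s \co U \to E$ be the corresponding section, so $s(x) = e$.

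First I would use the profiniteness of $X$ to shrink $U$ to a compact open set: the compact open subsets form a basis, so we may assume $U$ is compact open and $s$ is a section over this $U$ with $s(x) = e$. Next, the key input is Corollary \ref{cor:sectionsaresections} together with the corollary labelled ``Corollary \ref{cor:sectionquot}'' in the summary (every section is locally sub-equivariant): the section $s$, viewed as an element of $F(U)$, can be restricted to an $N$-equivariant section over $U \cap \bigcap_{n} nU$ for some open subgroup $N$ of $G$ — more carefully, there is an open subgroup $N \leqslant G$ and a compact open $N$-invariant neighbourhood $U'$ of $x$ inside $U$ on which $s$ restricts to an $N$-equivariant section $s'$. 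Shrinking further, I may also assume $U'$ is contained in a compact open set on which the $\stab_G(x)$-action is controlled; the essential point is that $N' := N \cap \stab_G(x)$ is an open subgroup of $\stab_G(x)$ and $s'$ is $N'$-equivariant with $s'(x) = e$.

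Now I claim $N'$ fixes $e$. For $h \in N'$ we have $hx = x$, and $N'$-equivariance of $s'$ gives $h \ast s' = s'$ on the ($N'$-invariant) domain, so in particular $(h \ast s')_{x} = s'_x = e$. Applying the key formula $h(s'_x) = (h \ast s')_{hx} = (h \ast s')_x$ yields $he = h(s'_x) = (h \ast s')_x = e$. Hence $N' \leqslant \stab_{\stab_G(x)}(e)$, so the latter is open, and since $e$ was arbitrary, $E_x$ is a discrete $\stab_G(x)$-set.

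\textbf{Main obstacle.} The routine part is the local-homeomorphism bookkeeping (passing between $e$, $W$, $U$, and the section $s$) and intersecting with $\stab_G(x)$. The real content is the step invoking that ``every section is locally sub-equivariant'' — i.e.\ producing the open subgroup $N$ and the $N$-invariant neighbourhood on which $s$ becomes genuinely $N$-equivariant; this is Proposition \ref{prop:compactdiscrete} / Corollary \ref{cor:sectionquot} and rests on the interplay between profiniteness of $G$ and of $X$. Since that result is available to us, the proof above is short; one only needs to be slightly careful that the neighbourhood witnessing sub-equivariance can be taken $\stab_G(x)$-invariant (or at least $N'$-invariant) so that the formula $h \ast s' = s'$ makes sense and can be evaluated at the fixed point $x$.
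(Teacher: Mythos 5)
Your argument is correct, but it takes a genuinely different and much heavier route than the paper. The paper's proof is a two-line topological observation: since $p$ is a local homeomorphism, the fibre $E_x = p^{-1}(x)$ is a \emph{discrete subspace} of $E$, and the continuous $G$-action on $E$ restricts to a continuous $\stab_G(x)$-action on $E_x$; with the subspace (hence discrete) topology this is literally Definition \ref{def:modulediscrete}, with no need to exhibit open stabilisers of points. Your proof instead routes through the ``locally sub-equivariant sections'' machinery (Proposition \ref{prop:compactdiscrete} / Corollary \ref{cor:sectionquot}) and the characterisation of discreteness via open point-stabilisers. Three remarks on what this costs. First, those results appear only in Section \ref{sec:sections}, after the lemma, and they require \emph{both} $G$ and $X$ to be profinite, whereas Lemma \ref{lem:discretestalks} sits in Section \ref{sec:equivsheaves} where $G$ and $X$ are arbitrary; your proof therefore establishes a weaker statement than the paper's (there is no circularity, since the proofs of Proposition \ref{prop:compactdiscrete} and Corollary \ref{cor:sectionquot} do not use Lemma \ref{lem:discretestalks}, but the dependence is inverted relative to the paper, which uses the easy lemma first and the harder section-level statements later). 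Second, you need the slightly strengthened form of Proposition \ref{prop:compactdiscrete} in which the $N$-invariant domain can be chosen to contain the given point $x$; this does follow from its proof (compact opens form a basis, so pick $V \ni x$), but it is not the literal statement, and you are right to flag it as the delicate point. Third, the equivalence ``discrete action $\Leftrightarrow$ every point-stabiliser is open'' is only recorded in the paper for profinite $G$ (Lemma \ref{lem:discretefixedpoints} and the surrounding discussion), so strictly you are importing that too. None of this is wrong, but the direct argument via the discreteness of fibres of a local homeomorphism is both shorter and more general.
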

\begin{proof}
Given a $G$-sheaf $(E,p)$ over $X$ and 
$x \in X$, the stalk $E_x$ is a discrete subspace of $E$,
with a continuous action of $\stab_G(x)$.
\end{proof}

\begin{remark}
Let $G_\delta$ be the group $G$, but with the discrete topology.
Then a $G$-space $X$ can be considered as a $G_\delta$-space via the 
identity map $G_\delta \to G$. 
An interesting question is how $G_\delta$-sheaves over $X$ and 
$G$-sheaves over $X$ are related. 
Scheiderer provides an answer in \cite[Remark 8.3]{Scheiderer94},
which states that a $G_\delta$-sheaf over a $G$-space $X$ is a $G$-sheaf if and only if
$g s_x = s_{gx}$ for all $g$ in some neighbourhood of the identity of $G$.
\end{remark}

\begin{example}\label{ex:onepointsheafsets}
If we let $X$ be the trivial one point $G$-space, 
then a $G$-sheaf of sets over $X$ is a discrete $G$-set.

Generalising, let $X$ be a discrete $G$-space, the category of 
$G$-sheaf of sets over $X$ is equivalent to the product category
\[
\prod_{[x] \in X/G } \textnormal{discrete} \endash \stab_G(x) \endash \textnormal{sets}.
\]
\end{example}

\begin{definition}
A $G$-sheaf of sets $E$ over a $G$-space $X$ is said to be \emph{stalk-wise fixed} 
if the action of $\stab_G(x)$ on $E_x$ is trivial for each $x\in X$. 
\end{definition}

The evident example of a stalk-wise fixed sheaf is a constant sheaf. This example also demonstrates
that the $G$-action on the sheaf space of a stalk-wise fixed sheaf is not trivial in general.

\begin{example}\label{ex:constantsheafset}
Let $X$ be a $G$-space and $A$ a set (equipped with the discrete topology). 
The \emph{equivariant constant sheaf} at $A$ over $X$ is defined by the projection onto the $X$ factor, 
which is a local homeomorphism
\[
p \co A \times X \lra X.
\]
\end{example}

\section{Sections of equivariant sheaves}\label{sec:sections}

From this section onwards we will assume that both $G$ and $X$ are profinite.
See Appendix \ref{Sec:profinitestuff} for a recap of the profinite spaces and groups. 
This assumption allows us to show that any section of a $G$-equivariant is 
\emph{locally sub-equivariant} in the sense of the following result. 
A similar result can be found in Schneider \cite[Statement (3)]{schneider98}.

\begin{proposition}\label{prop:compactdiscrete}
Let $F$ be a $G$-sheaf of sets over a profinite $G$-space $X$
and let $U$ be an open subset of $X$. 
Any section of $F(U)$ restricts to an $N$-equivariant section 
over an $N$-invariant domain, for some open normal subgroup $N$ of $G$.
\end{proposition}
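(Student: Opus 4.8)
The plan is to work at the level of the sheaf space $p \co E \to X$, using the local homeomorphism property together with the compactness of $G$ and $X$, and then conclude for the section description $F(U)$ via Corollary \ref{cor:sectionsaresections}. Fix a section $s \in \Gamma(U, E)$. The key tension is that $s$ is only a continuous section over $U$, with no $G$-equivariance assumed, so we want to shrink $U$ to an invariant neighbourhood and simultaneously find a small open $N \trianglelefteqslant G$ that genuinely fixes $s$ there. Since $X$ is profinite, $U$ has a basis of compact open subsets, so after restricting $s$ we may assume $U$ is compact open; similarly the orbit and its preimages will be compact, which is what lets the ``pointwise'' smallness estimates be made uniform.

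First I would fix a point $x \in U$ and use continuity of the action together with the fact that $E$ is a local homeomorphism to produce an open $N_x \trianglelefteqslant G$ and a compact open $U_x \ni x$ (contained in $U$) such that $n \ast s|_{U_x} = s|_{n U_x}$ for all $n \in N_x$; concretely, $g(s_x) = (g\ast s)_{gx}$ is the key formula, and near the identity of $G$ the map $g \mapsto g(s_x)$ lands in the same sheet of $E$ over a neighbourhood of $x$, so one can force $g(s_y) = s_{gy}$ for $y$ near $x$ and $g$ near $e$. Here one uses that $\stab_G(x)$ is closed, that $G$ is profinite so open normal subgroups form a neighbourhood basis of $e$, and that a local homeomorphism is locally a homeomorphism onto an open subset of $X$. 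Then I would cover the compact set $U$ by finitely many such $U_{x_1}, \dots, U_{x_k}$ and set $N_0 = \bigcap_i N_{x_i}$, still open normal in $G$. Shrinking further, replace $U$ by $U' = \bigcap_{g \in G} gU$ — or rather, since $U$ is compact open and $G$ profinite, one checks $\bigcap_{g \in G} gU$ is open: it equals $\bigcap_{gN \in G/N} gU$ for a suitable open normal $N$ (as $U$ is open and $G$ acts with open-subgroup stabilisers on compact opens, the orbit of $U$ in the space of compact opens is finite), hence a finite intersection of opens. Take $N = N_0 \cap N$; then $U'$ is $N$-invariant and $s|_{U'}$ is $N$-equivariant.

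The main obstacle I expect is making the two ``smallness'' requirements cohere: the subgroup $N_x$ that fixes the germ near $x$ depends on $x$, and one must ensure that, after taking the finite intersection over a cover, the resulting $N$ still acts on the chosen neighbourhood in a way compatible with $s$ on all the overlaps — i.e. that $N$-equivariance is not just a local statement near each $x_i$ but holds on the union. This is where the argument must use that $s$ is a \emph{single} section (so the sheets glue) rather than merely a compatible family of germs, and that $N$-equivariance of germs on an $N$-invariant open set can be checked stalk-wise. It is plausible the authors instead phrase everything through Corollary \ref{cor:sectionquot}/the discreteness of $F(U)$ as a $\stab_G(U)$-set (once $U$ is taken $G$-invariant, $\stab_G(U) = G$ and a discrete $G$-set has open stabilisers, giving $N$ immediately); if Proposition \ref{prop:compactdiscrete} is logically prior to that corollary, though, the sheaf-space argument above is the route, with the compactness-of-$X$ finite-cover step and the finiteness of the $G$-orbit of the compact open $U$ being the two facts doing the real work.
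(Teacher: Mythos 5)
There is a genuine gap in the step that produces the $N$-invariant domain. You replace $U$ by $U' = \bigcap_{g\in G} gU$; while this set is indeed open (the $G$-orbit of a compact open set is finite because $\stab_G(U)$ is open), it is the \emph{$G$-invariant core} of $U$ and can perfectly well be empty --- take $X=G$ acting on itself by left translation and $U$ a proper open subgroup, so that the translates $gU$ are pairwise disjoint cosets. In that case your ``restriction'' is the empty section, the conclusion is vacuous, and the proposition becomes useless for its intended application in Corollary \ref{cor:sectionquot}, where one needs the whole compact open set $U$ to be $N$-invariant. The statement only asks for invariance under \emph{some open normal subgroup} $N$, not under all of $G$, and the right tool is Lemma \ref{lem:restrictequi}: a compact open $V\subseteq U$ satisfies $NV=V$ for some open normal $N$. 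You never apply that lemma to the domain; passing to the $G$-core overshoots. Separately, you flag but do not close the ``coherence on overlaps'' worry. In fact there is nothing to close --- the condition $n\cdot s_y = s_{ny}$ is a pointwise statement about the single section $s$, so once it holds for $y$ in each $U_{x_i}$ it holds on their union --- but as written your proof stops at an acknowledged obstacle rather than resolving it.

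For comparison, the paper's proof avoids the pointwise local-sheet argument entirely. It picks a compact open $V\subseteq U$ and applies Lemma \ref{lem:restrictequi} twice: once to $V\subseteq X$, giving $N_1$ with $N_1V=V$, and once to the compact open sheet $s(V)\subseteq E$, giving $N_2$ with $N_2\, s(V)=s(V)$. Setting $N=N_1\cap N_2$, for $y\in V$ and $n\in N$ both $s(ny)$ and $ns(y)$ lie in $s(V)$, where $p$ is injective, and both map to $ny$ under $p$; hence they coincide. Your local argument at each point is essentially a germwise version of this and is salvageable, but the second application of Lemma \ref{lem:restrictequi} --- inside the total space $E$ rather than the base --- eliminates both the finite-cover bookkeeping and the invariant-domain difficulty in one stroke.
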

\begin{proof}
Write $p \co E \lra X$ be the sheaf space description of $F$.
Given a section $s \colon U\lra E$, we can find a compact-open subset of 
the form $V$ contained in $U$ since $X$ has an open-closed basis. 
By Lemma \ref{lem:restrictequi}, $V$ is $N_1$-invariant for some open normal subgroup $N_1$ of $G$.

The set $s(V) \subset E$ is compact and open, hence  
another application of Lemma \ref{lem:restrictequi} gives an 
open normal subgroup $N_2$ such that $s(V)$ is $N_2$-invariant.

Set $N= N_1 \cap N_2$, we claim that $s$ is $N$-equivariant. 
As $V$ and $s(V)$ are invariant under $N$, 
we have $ny \in V$ and $ns(y) \in s(V)$ for all $y \in V$ and $n \in N$. 
Using this and the fact that $p$ is a $G$-equivariant injection on $V$, we have 
\[
p \circ s(ny)=ny \quad  \quad p(ns(y))=np(s(y))=ny,
\]
and so $s(ny) = ns(y)$ for all $y \in V$ and $n \in N$.
\end{proof}

The next result is key to Definition \ref{def:gpresheaf}, which defines 
equivariant presheaves.
That definition encodes the continuity of the $G$ action on a sheaf space in
terms of its actions on sections, as Theorem  \ref{thm:equisheafifyfunctor} implies. 

\begin{corollary}\label{cor:sectionquot}
Let $F$ be a $G$-sheaf of sets over a profinite $G$-space $X$.
For $U$ a compact open subset of $X$, the set of sections
$F(U)$ is a discrete $\stab_G(U)$-set.
\end{corollary}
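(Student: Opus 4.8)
The plan is to deduce this directly from Proposition \ref{prop:compactdiscrete}. Recall that a $\stab_G(U)$-set $S$ is \emph{discrete} precisely when the stabiliser of each element is open in $\stab_G(U)$ (equivalently, every point has an open stabiliser, so the action map $\stab_G(U) \times S \to S$ is continuous when $S$ carries the discrete topology). So the task reduces to showing: for each section $s \in F(U)$, the subgroup $\{g \in \stab_G(U) : g \ast s = s\}$ is open in $\stab_G(U)$.

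First I would fix $s \in F(U)$ and apply Proposition \ref{prop:compactdiscrete} to obtain an open normal subgroup $N$ of $G$ and an $N$-invariant open subset $W \subseteq U$ such that $s|_W$ is $N$-equivariant. The subtlety is that Proposition \ref{prop:compactdiscrete} only controls $s$ on the smaller set $W$, whereas we need to control $s$ on all of $U$. To handle this, the key step is to note that since $U$ is compact and $X$ has a basis of compact-open sets, the restriction-to-a-smaller-compact-open argument can be iterated: cover $U$ by finitely many compact-open sets $W_1, \dots, W_k$ (each invariant under some open normal $N_i$, with $s|_{W_i}$ being $N_i$-equivariant), set $N = \bigcap_i N_i$, which is still open normal in $G$, and then $s|_{W_i}$ is $N$-equivariant for every $i$. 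Because a section of a sheaf is determined by its restrictions to the members of an open cover, and $\{W_i\}$ covers $U$, for $n \in N \cap \stab_G(U)$ the section $n \ast s$ agrees with $s$ on each $W_i$ (using the compatibility of the bijection $F(U) \cong \Gamma(U,E)$ with the isomorphisms $g \co U \to gU$ from Corollary \ref{cor:sectionsaresections}, together with $nW_i = W_i$), hence $n \ast s = s$ on $U$. Thus $N \cap \stab_G(U)$ fixes $s$, and this is an open subgroup of $\stab_G(U)$, so $\stab_G(s) \supseteq N \cap \stab_G(U)$ is open.

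Actually, I expect one can streamline this: apply Proposition \ref{prop:compactdiscrete} once to get $N$ with an $N$-invariant $W \subseteq U$ on which $s$ is $N$-equivariant, then separately observe that (by the same Lemma \ref{lem:restrictequi}-type argument used in the proof of that proposition, applied to the compact-open set $s(U) \subseteq E$ and to $U$ itself) one can in fact take $W = U$ directly, since $U$ is already compact-open and $s(U)$ is compact-open in $E$. Re-examining the proof of Proposition \ref{prop:compactdiscrete}, the passage to a smaller $V$ was only needed to get a compact-open set inside a general open $U$; when $U$ is already compact-open one takes $V = U$ and the argument yields an open normal $N$ with $s$ itself $N$-equivariant. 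Either route works, and I would present whichever reads more cleanly.

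The main obstacle, such as it is, is purely bookkeeping: making sure the translate $n \ast s$ is compared to $s$ correctly as elements of $F(U)$ (not of $F(nU)$ for some other set), which is exactly what the naturality clause of Corollary \ref{cor:sectionsaresections} is for, and making sure "discrete $\stab_G(U)$-set" is unpacked to the open-point-stabiliser condition via Definition \ref{def:modulediscrete}. There is no serious analytic or algebraic difficulty — the content is entirely carried by Proposition \ref{prop:compactdiscrete}; this corollary is essentially a repackaging of it in the language of discrete $G$-sets.
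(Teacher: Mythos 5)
Your proposal is correct and, in its streamlined form, is exactly the paper's argument: since $U$ is already compact-open, the proof of Proposition \ref{prop:compactdiscrete} applies with $V=U$, yielding an open normal $N$ with $NU=U$ and $n\ast s = s$ for all $n\in N$, so every point-stabiliser in $\stab_G(U)$ is open. You were right to flag that the proposition as \emph{stated} only controls $s$ on a smaller subdomain — the paper's proof silently uses the $V=U$ strengthening, and your finite-cover fallback is a valid (if unnecessary) way around that.
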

\begin{proof}
Let $E$ be the sheaf space corresponding to $F$.
Let $s\in \Gamma\left(U,E\right)=F(U)$, by Proposition \ref{prop:compactdiscrete} there exists an open normal 
subgroup $N$ such that $U$ is $N$-invariant and $s$ is $N$-equivariant. 
That is, $s$ is fixed by the action of $N$ and $N \trianglelefteqslant \stab_G(U)$. 
As $N$ is open, we see that $F(U) = \Gamma\left(U,E\right)$ is discrete.
\end{proof}

\section{Equivariant presheaves}\label{sec:equivpresheaves}

As we saw, it was uncomplicated to define a $G$-action on a sheaf space: one simply requires the group
to act on compatibility on the base and total space in a continuous manner.
Definition \ref{defn:cocyclesheaf} defines an equivariant sheaf in terms
of an isomorphism between certain pullback sheaves and the cocycle condition, 
which implicitly uses the sheaf space of sheaf.
It is harder to directly define a $G$-equivariant presheaf, as  
the non-equivariant version has no topology on which the group can act continuously. 
This is a serious deficit. Without a good theory or presheaves and sheafification it is hard to 
replicate many of the standard constructions of sheaf theory, such as 
the infinite coproducts of Section \ref{sec:colimitssheaves} 
or the push forwards of Section \ref{sec:pbpf}.

For finite groups, there is a long-known description of a equivariant presheaves
\cite[Example A2.1.11(c)]{elephant}. Given a finite group $G$ acting on the 
topological space $X$, define the category $\mcO_G(X)$ to have objects the 
open sets of $X$ with morphisms
\[
\mcO_G(X)(U,V) = \{ g \in G  \mid gU \subseteq V \}.
\]
A $G$-equivariant presheaf is a functor 
\[
F \co \mcO_G(X)^{op} \lra \sets.
\]
We can think of this as a presheaf with additional maps 
$F(U) \to F(gU)$ for each $g \in G$. 
For a general topological group $G$ and topological space
this definition is insufficient, as it will not distinguish between 
$G$ with its given topology and $G$ with the discrete topology. 
We resolve this issue when $G$ and $X$ are profinite using Corollary \ref{cor:sectionquot}.
We construct a good definition of equivariant presheaf and 
an equivariant sheafification functor with properties similar to the
non-equivariant case. 

A different approach appears in \cite{schneider98}. 
The idea is to start with $G_{d}$-sheaves for $G_d$ the group $G$ 
with the discrete topology and define a subsheaf $F'$ of a $G_d$-sheaf
$F$ consisting of those sections which are locally sub-equivariant. 

As before, let $G$ be a profinite group and $X$ a profinite $G$-space. Note that since 
$X$ is profinite, the set of all open-compact subsets forms a basis for the topology. 
\begin{definition}\label{def:probasisdiagram}
For $X$ a profinite $G$-space with open-closed basis
$\mcB$, the category $\mcO_G(\mcB)$ has objects given by 
the elements of $\mcB$ and morphisms given by the set
\[
\mcO_G(\mcB)(U,V) = \{ g \in G  \mid gU \subseteq V \}.
\]
\end{definition}

Any map in $\mcO_G(\mcB)$ is a composite of an \emph{inclusion} map
$e \co U \to V$ and a \emph{translation} map $g \co U \to gU$. Moreover, these maps commute in the sense that 
\[
\xymatrix{
U \ar[r]^e  \ar[d]_g &
V \ar[d]^g \\
gU \ar[r]_e &
gV.
}
\]
The horizontal maps will induce the restriction maps 
that occur in non-equivariant sheaves. 

We now define $G$-equivariant presheaves in terms of an open-closed basis, the choice of which is 
(largely) unimportant by Remark \ref{rmk:basischoice}.

\begin{definition}\label{def:gpresheaf}
For $\mcB$ a open-closed basis of $X$, a 
\emph{$G$-equivariant presheaf of sets over $\mcB$} is a functor 
\[
F \co \mcO_G(\mcB)^{op} \lra \sets
\]
such that the action of $\mcO_G(\mcB)^{op}(U,U)=\stab_G(U)$ on $F(U)$ is 
continuous, when $F(U)$ is given the discrete topology and 
$\stab_G(U) \leqslant G$ has the subspace topology.

A map of $G$-presheaves of sets over $\mcB$ is a natural transformation.
We write $\seteqpresheaf{G}{\mcB}$ to denote 
the category of $G$-presheaves of sets over $\mcB$.
\end{definition}

Expanding the definition, $F(U)$ is a a discrete $\stab_G(U)$-set under the action 
\[
F(g) \co F(U) \lra F(U)
\]
for $g \in \stab_G(U)$.  For the sake of notational simplicity, we often write
\[
g \coloneqq F(g^{-1}) \co F(U) \lra F(gU)
\]
so that $hg$ is a map from $F(U)$ to $F(hg U)$.

Let $U \subseteq V$ be open sets and $g \in G$. Then 
for a $G$-presheaf $F$, 
the following diagram commutes
\[
\xymatrix{
F(V) \ar[r]^{F(e)}  \ar[d]_{g=F(g^{-1})} &
F(U) \ar[d]^{g=F(g^{-1})} \\
F(gV) \ar[r]_{F(e)} &
F(gU).
}
\]
We can think of this diagram as saying that the restriction maps are 
equivariant.

\begin{theorem}\label{thm:equivariantsheafspace}
Let $\mcB$ be an open-closed basis of the profinite $G$-space $X$ and let $F$ be a $G$-presheaf of sets over $\mcB$.
Then the sheaf space constructed from $F$ considered as a non-equivariant 
presheaf has a canonical $G$-action. 
\end{theorem}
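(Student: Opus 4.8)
The plan is to start from the classical sheaf space construction applied to $F$ viewed as an ordinary (non-equivariant) presheaf on the basis $\mcB$, and then use the extra translation maps $g \co F(U) \to F(gU)$ encoded in $\mcO_G(\mcB)$ to equip the total space with a $G$-action. Concretely, recall that the sheaf space is $E = \coprod_{x \in X} F_x$ where $F_x = \colim_{U \ni x, U \in \mcB} F(U)$ is the stalk, with the usual topology generated by the images $\{ s_y : y \in U \}$ of sections $s \in F(U)$. First I would define, for $g \in G$ and a germ $s_x \in F_x$ represented by a section $s \in F(U)$ with $x \in U$, the element $g \cdot s_x := (F(g^{-1})(s))_{gx} \in F_{gx}$, using that $g \co U \to gU$ is a morphism in $\mcO_G(\mcB)$ and $gx \in gU$. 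The commuting square relating inclusions and translations in $\mcO_G(\mcB)$ shows this is independent of the choice of representative $(U,s)$: if $s_x = s'_x$ then $s$ and $s'$ agree after restriction to some smaller basic open $V \ni x$, and functoriality together with that square shows $F(g^{-1})(s)$ and $F(g^{-1})(s')$ agree after restriction to $gV \ni gx$. Then the composition law $hg = F(g^{-1})F(h^{-1}) = F((hg)^{-1})$ on $\mcO_G(\mcB)$, together with $e$ acting as the identity, gives that this is a genuine (set-theoretic) $G$-action on $E$, and by construction $p \co E \to X$ is $G$-equivariant since the germ $g \cdot s_x$ lives over $gx$.

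Next I would check that this action is by continuous maps — equivalently, since each $g^{-1}$ is an inverse, that $g \co E \to E$ is a homeomorphism. This is where the basis-level picture pays off: the translate by $g$ of the basic open set $\{ s_y : y \in U \}$ is exactly $\{ (F(g^{-1})s)_{y'} : y' \in gU \}$, which is again a basic open set of $E$ (the one determined by the section $F(g^{-1})(s) \in F(gU)$). So $g$ sends basic opens to basic opens, and is therefore a homeomorphism. This verifies all of Definition \ref{defn:eqsheaf} except the joint continuity of the action map $G \times E \to E$, which is the substantive point and the step I expect to be the main obstacle.

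For joint continuity, I would argue locally: fix $(g_0, s_x) \in G \times E$ with $s \in F(U)$, $x \in U$, and let $\{ t_{y'} : y' \in W \}$ be a basic open neighbourhood of $g_0 \cdot s_x = (F(g_0^{-1})s)_{g_0 x}$ in $E$. Shrinking, we may take $W \subseteq g_0 U$ and $t = F(g_0^{-1})(s)|_W$. The key input is Corollary \ref{cor:sectionquot} (via Proposition \ref{prop:compactdiscrete} / the discreteness built into Definition \ref{def:gpresheaf}): after replacing $U$ by a compact-open refinement $V \in \mcB$ with $x \in V \subseteq U$ and $g_0 V \subseteq W$, the stabiliser $\stab_G(V)$ acts on $F(V)$ through a finite discrete quotient, so the stabiliser of the element $s|_V \in F(V)$ is an open subgroup $N \leqslant \stab_G(V)$, which we can take to be open normal in $G$ by intersecting with finitely many conjugates and using that $V$ is $N'$-invariant for some open normal $N'$. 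Then for every $h$ in the open neighbourhood $g_0 N$ of $g_0$ and every germ $s'_{y}$ with $y \in V$, one computes $h \cdot s'_y = F(h^{-1})(s|_V)_{hy}$; writing $h = g_0 n$ with $n \in N$ and using $F(n^{-1})(s|_V) = s|_V$ together with functoriality, this equals $F(g_0^{-1})(s|_V)_{hy} = t_{hy}$, which lies in $\{ t_{y'} : y' \in W \}$ since $hy \in h V \subseteq g_0 N V = g_0 V \subseteq W$ (using $NV = V$). Hence the product set $g_0 N \times \{ s'_y : y \in V \}$ maps into the prescribed neighbourhood, establishing joint continuity.

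In summary: the construction of the underlying $G$-action is a direct bookkeeping exercise with the morphisms of $\mcO_G(\mcB)$; the two continuity statements for the individual homeomorphisms $g \co E \to E$ are routine because translation carries basic opens to basic opens; and the one genuinely equivariant-topological ingredient is the joint continuity of $G \times E \to E$, which is exactly what the discreteness hypothesis in the definition of a $G$-presheaf (equivalently, Corollary \ref{cor:sectionquot}) was designed to supply — it lets us absorb a whole open normal subgroup $N$ worth of group elements without changing the relevant section. The word ``canonical'' in the statement refers to the fact that every choice made (representative of a germ, refinement $V$, subgroup $N$) is either forced or irrelevant, so the resulting $G$-sheaf space depends functorially on $F$; this functoriality is what will be used in Theorem \ref{thm:equisheafifyfunctor}.
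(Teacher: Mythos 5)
Your proposal is correct and follows essentially the same route as the paper: the same stalkwise definition of the action via $g\ast s = F(g^{-1})(s)$, the same well-definedness check using the commuting inclusion/translation square, and the same joint-continuity argument in which the discreteness of $F(V)$ as a $\stab_G(V)$-set produces an open normal subgroup $N$ fixing the relevant section, so that a product neighbourhood $g_0N\times B(s|_V,V)$ lands in the target basic open. The only cosmetic differences are your use of the coset $g_0N$ centred at the source point rather than the paper's $Ng$ centred at a preimage, and your explicit (but redundant, given joint continuity) check that each translation is a homeomorphism.
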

\begin{proof}
We define a topological space $E$ in the same way as the usual sheafification construction. The underlying set is 
\[
E=\coprod_{x \in X} F_x
\]
with projection map $p \co E \to X$ sending $f \in F_x$ to $x$. 
The topology is generated by the sets 
\[
B(s,U) = \{ s_x \mid x \in U\}
\]
for $U \in \mcB$ and $s \in F(U)$. 
The map $p$ is a local homeomorphism.

To define the $G$-action, take a germ $f_x \in F_x$ and $g \in G$.
Choose a section $s \in F(U)$ representing $f_x$, let 
$g \ast s \in F(gU)$ denote the image of $s$ under $g=F(g^{-1})$.
We define
\[
g (f_x) = (g \ast s)_{gx}
\] 
This action is well-defined as restriction is equivariant and
any two sections which represent a germ
agree on some neighbourhood. 
The action defines a map $m \co G \times E \lra E$
such that the projection map is $G$-equivariant. 

It remains to show that the $G$-action on $E$ is continuous. 
Let $B(s,U)$ be a basis element, and $(g,e_y) \in G \times E$ such that
\[
m(g,e_y) =s_x \in B(s,U).
\]
Then we see that $gy=x$, so $y=g^{-1} x$. 
Let $t \in F(W)$ be a representative of $e_y$, 
then 
\[
s_x = g e_y = (g \ast t)_{gy} = (g \ast t)_x 
\]
so there is some compact-open set $V \subseteq U \cap gW$
where $s_{|V} = (g\ast t)_{|V} = g \ast (t_{|g^{-1} V})$ in $F(V)$.
Let $s' = s_{|V}$ and $t' =t_{|g^{-1} V}$, then $s'=g\ast t'$. 

We know that $F(V)$ is a discrete $\stab_G(V)$-set.
Hence, there is an open normal subgroup $N$ of $\stab_G(V)$ 
such that for any $n \in N$, $n\ast s'=s'=g\ast t' = ng\ast t'$. 
By Lemma \ref{lem:openstab}, the group $\stab_G(V)$ is open, 
hence $N$ is an open subgroup of $G$ and $Ng$ is an open neighbourhood of $g$ in $G$.

We claim that the open set $Ng \times B(t', g^{-1} V) \subseteq G \times E$ is sent to 
$B(s,U)$ by $m$. We see that
\[
m(ng, t'_{g^{-1} v}) = (ng\ast t')_{nv} = (gt')_{nv}  = s'_{nv}=s_{nv}
\]
and $nv \in NV = V \subseteq U$, so the claim is true. 
\end{proof}

\begin{definition}
Let $X$ be a profinite $G$-space with open-closed basis
$\mcB$ and $F$ a $G$-presheaf of sets over $\mcB$.
The $G$-sheaf constructed in Theorem \ref{thm:equivariantsheafspace}
is called the \emph{equivariant sheafification} of $F$ and denoted $\sheafify F$.
\end{definition}

\begin{theorem}\label{thm:equisheafifyfunctor}
Equivariant sheafification defines a functor 
\[
\sheafify \co \seteqpresheaf{G}{\mcB} \lra \seteqsheaf{G}{X}
\]
which is left adjoint to the forgetful functor.

The unit of this adjunction gives a 
canonical map from a $G$-presheaf to a $G$-sheaf $F \to \sheafify F$. 
The canonical map $\sheafify F \to \sheafify^2 F$ is an isomorphism. 
Furthermore, if $E$ is a $G$-sheaf, a map $F \to E$ factors over $F \to \sheafify F$.
\end{theorem}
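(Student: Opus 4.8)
The plan is to establish the adjunction by exhibiting the unit and verifying the universal property, then deduce the idempotency statements as formal consequences. First I would construct the unit map $\eta_F \co F \to \mathrm{forget}(\sheafify F)$. Non-equivariantly, this is the standard map sending $s \in F(U)$ to the section $x \mapsto s_x$ of the sheaf space $E$; by Corollary \ref{cor:sectionsaresections} (or rather its presheaf-level analogue), such a section lies in $(\sheafify F)(U) = \Gamma(U,E)$. I would then check that $\eta_F$ is a map of $G$-presheaves, i.e. that $\eta_F$ intertwines $F(g^{-1}) \co F(U) \to F(gU)$ with the translation action $g\ast(-) \co \Gamma(U,E) \to \Gamma(gU,E)$ coming from the $G$-sheaf structure on $E$ built in Theorem \ref{thm:equivariantsheafspace}; this is immediate from the very definition $g(f_x) = (g\ast s)_{gx}$ of the $G$-action on $E$, since that formula says precisely $\eta_F(g\ast s) = g\ast \eta_F(s)$. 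Naturality of $\eta$ in $F$ is routine.

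Next I would verify the universal property: given a $G$-sheaf $E'$ over $X$ and a map of $G$-presheaves $\varphi \co F \to \mathrm{forget}(E')$, there is a unique map of $G$-sheaves $\tilde\varphi \co \sheafify F \to E'$ with $\mathrm{forget}(\tilde\varphi)\circ\eta_F = \varphi$. Existence and uniqueness of $\tilde\varphi$ as a map of (non-equivariant) sheaves is the classical sheafification adjunction, applied to $F$ and $E'$ viewed as ordinary presheaf and sheaf; concretely $\tilde\varphi$ is defined on stalks by $\tilde\varphi_x(s_x) = \varphi(s)_x$. The only new content is that this $\tilde\varphi$ is automatically $G$-equivariant: for $f_x \in (\sheafify F)_x = F_x$ represented by $s \in F(U)$ one computes, using the $G$-action formula on $\sheafify F$, that $\tilde\varphi_x$ applied to $g(f_x) = (g\ast s)_{gx}$ equals $\varphi(g\ast s)_{gx}$, and since $\varphi$ is a map of $G$-presheaves this is $(g\ast\varphi(s))_{gx} = g\big(\varphi(s)_x\big) = g\big(\tilde\varphi_x(f_x)\big)$, using the key formula $g(t_x) = (g\ast t)_{gx}$ relating the $G$-actions on sections and stalks of $E'$. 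Since stalkwise equivariance of a map of sheaf spaces forces genuine equivariance (a continuous map determined by its effect on germs), $\tilde\varphi$ is a map of $G$-sheaves. This gives the adjunction $\sheafify \dashv \mathrm{forget}$.

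Finally, the idempotency statements follow formally. The counit $\varepsilon_E \co \sheafify(\mathrm{forget}\, E) \to E$ of the adjunction is, at the level of underlying sheaves, the classical sheafification counit, which is an isomorphism because $E$ is already a sheaf; by the equivariance argument above it is an isomorphism of $G$-sheaves. Hence $\mathrm{forget}$ is fully faithful, which is exactly the assertion that $F \to \sheafify F$ induces $\sheafify F \xrightarrow{\sim} \sheafify^2 F$ (apply $\sheafify$ to $\eta_F$ and use the triangle identity; equivalently $\sheafify\eta_F$ is inverse to $\varepsilon_{\sheafify F}$) and the assertion that any map $F \to E$ from a $G$-presheaf to a $G$-sheaf factors uniquely through $\eta_F \co F \to \sheafify F$ — which is precisely the universal property just proved.

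I expect the main obstacle to be a bookkeeping issue rather than a conceptual one: carefully matching the presheaf-level translation maps $F(g^{-1})$ with the $G$-action on sections $g\ast(-)$ through the identifications $\Gamma(U,E) \cong (\sheafify F)(U)$ and $F_x \cong (\sheafify F)_x$, and confirming that ``equivariant on each stalk/germ'' upgrades to ``equivariant as a map of sheaf spaces.'' The latter is where the continuity established in Theorem \ref{thm:equivariantsheafspace} is implicitly used: a map of sheaf spaces over $X$ is automatically continuous, so equivariance on germs — combined with the already-continuous $G$-action on both total spaces — suffices. Everything else is a transparent transfer of the non-equivariant sheafification adjunction through the forgetful functor.
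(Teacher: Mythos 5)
Your proposal is correct and follows the same route as the paper: the paper's own proof is a two-line reduction to the non-equivariant sheafification adjunction, and your write-up simply supplies the equivariance checks (unit is a map of $G$-presheaves, the induced map $\tilde\varphi$ is stalkwise equivariant, idempotency is formal) that the paper leaves implicit. The one ingredient you omit is the one the paper explicitly names: you use the forgetful functor $\seteqsheaf{G}{X} \to \seteqpresheaf{G}{\mcB}$ without verifying that it is well defined, i.e.\ that for a $G$-sheaf $E'$ and $U \in \mcB$ the set of sections $\Gamma(U,E')$ with its $\stab_G(U)$-action actually satisfies the discreteness condition of Definition \ref{def:gpresheaf}. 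This is not formal: it is Corollary \ref{cor:sectionquot}, which rests on the local sub-equivariance of sections (Proposition \ref{prop:compactdiscrete}) and uses profiniteness of $G$ and $X$ in an essential way. Once that is cited, the rest of your argument goes through as written.
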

\begin{proof}
Corollary \ref{cor:sectionquot} implies that the forgetful functor exists.
The other statements follow from the non-equivariant case. 
\end{proof}

\begin{remark}\label{rmk:basischoice}
If we are interested in $G$-sheaves, then we claim that the choice of the basis $\mcB$ is unimportant. 
A presheaf over an open-closed basis $\mcB$ defines a presheaf on an open-closed basis 
$\mcB' \supseteq \mcB$ by right Kan extension, as in Lemma \ref{lem:sheafoverbasis}.
The (equivariant) sheafifications of those two presheaves agree.
Since any two open-closed bases $\mcB$ and $\mcB'$ can be included into a larger open-closed basis,
the claim is justified. 
\end{remark}

\section{\texorpdfstring{$G$}{G}-sheaves of \texorpdfstring{$R$}{R}-modules}\label{sec:GsheafRmod}

From this section onwards, we will let $R$ denote a fixed ring (with unit).
We continue to assume that $G$ is a profinite group and $X$ is a profinite $G$-space.
We introduce the category of $G$-sheaves of $R$-modules over $X$. 
As earlier, we have two equivalent definitions, once these are introduced and 
related to $G$-equivariant presheaves of $R$-modules, we will  
verify that the category of $G$-sheaves of $R$-modules is an abelian category.

\begin{definition}\label{defn:rmodulecocyclesheaf}
A $G$-\emph{equivariant sheaf} of $R$-modules over $X$ is sheaf of $R$-modules $F$ over $X$ together with an isomorphism 
\[
\rho \co d_1^*(F)\lra d_0^*(F)
\]
such that the cocycle condition and the identity condition hold:
\[
e_1^* \rho = e_0^* \rho \circ e_2^* \rho
\quad \quad
s_0^* \rho = \id_F.
\]
A morphism $f \co F \to F'$ of $G$-sheaves of $R$-modules over $X$ is a map $f$ of sheaves 
of $R$-modules such that the following square commutes. 
\[
\xymatrix{
d_1^*(F)
\ar[d]^{\rho}\ar[r]^{d_1^*(f)} &
d_1^*(F')
\ar[d]^{\rho'}
\\ 
d_0^*(F)
\ar[r]^{d_0^*(f)}  &  
d_0^*(F')}
\]
We write $\Reqsheaf{G}{X}{R}$ for this category. 
\end{definition}

\begin{definition}\label{defn:eqsheafRmod}
A \emph{$G$-equivariant sheaf} of $R$-modules over a topological space $X$ is a map
$p \co E \to X$ such that:
\begin{enumerate}
\item \label{item:sheafeq} $p$ is a $G$-equivariant map $p \colon E\lra X$ of spaces with continuous $G$-actions,
\item \label{item:sheafab} $(E,p)$ is a sheaf space of $R$-modules,
\item \label{item:sheafcomb} each map $g \co p^{-1} (x) \lra p^{-1} (g x)$ is a map of $R$-modules for every $x\in X,g\in G$.
\end{enumerate}
We will write this as either the pair $(E,p)$ or simply as $E$.
A map $f \co (E,p) \to (E',p')$ of $G$-sheaves of $R$-modules over $X$ is a 
$G$-equivariant map $f \co E \to E'$ such that $p'f=p$ 
and $f_x \co E_x \to E'_x$ is a map of $R$-modules for each $x \in X$.  
\end{definition}
Note that Points (\ref{item:sheafeq}) and (\ref{item:sheafab}) give a map of
sets for Point (\ref{item:sheafcomb}),
but they do not imply that it is a map of $R$-modules.
This last point is the compatibility between the equivariant structure and the 
sheaf structure.

Similar to Theorem \ref{thm:eqsheafequiv}, these two definitions of equivariant sheaves are equivalent.
See Sugrue \cite[Theorem 4.1.5]{sugruethesis} and K{\k{e}}dziorek \cite[Lemma 7.0.3]{kedziorekthesis}
for the proof. 

\begin{theorem}\label{thm:eqsheafequivRmod}
The definitions of $G$-sheaves of sets from Definition \ref{defn:rmodulecocyclesheaf} and 
Definition \ref{defn:eqsheafRmod} are equivalent.
\end{theorem}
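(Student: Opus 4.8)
The plan is to deduce Theorem~\ref{thm:eqsheafequivRmod} from the already-established set-level equivalence of Theorem~\ref{thm:eqsheafequiv} (equivalently Theorem~\ref{thm:eqsheafequivRmod}'s non-equivariant and set-theoretic ancestors), by checking that the comparison functors in both directions are compatible with $R$-module structures and that morphisms correspond. In other words, the $R$-linear statement is obtained by decorating the set-valued equivalence with the extra algebraic data; almost nothing new needs to be proved, one only records that each piece of structure is preserved.

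First I would recall the two comparison functors at the level of sets (or $\bQ$-modules, as in \cite[Theorem 4.1.5]{sugruethesis} and \cite[Lemma 7.0.3]{kedziorekthesis}): from a cocycle-type $G$-sheaf $(F,\rho)$ one forms the espace \'etal\'e $E = \coprod_{x} F_x$ with its $G$-action coming from the stalk isomorphisms $\rho_{(g,x)}$, and conversely from a $G$-equivariant local homeomorphism $p \co E \to X$ one forms the sheaf of sections $U \mapsto \Gamma(U,E)$ together with the isomorphism $\rho$ induced by the $G$-action. Then I would run through the three points. For the direction (cocycle $\Rightarrow$ sheaf space): since $F$ is a sheaf of $R$-modules, each stalk $F_x$ is an $R$-module, so $E$ is a sheaf space of $R$-modules (Point~\ref{item:sheafab}); and because $\rho$ is an isomorphism of sheaves of $R$-modules, the induced stalk maps $\rho_{(g,x)}$ are $R$-linear, hence so are the maps $g \co F_x \to F_{gx}$ (Point~\ref{item:sheafcomb}). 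For the reverse direction: if $(E,p)$ satisfies Definition~\ref{defn:eqsheafRmod}, then $\Gamma(U,E)$ is an $R$-module by pointwise operations, restriction maps are $R$-linear, so we have a sheaf of $R$-modules; and since each $g \co E_x \to E_{gx}$ is $R$-linear, the induced map $g \ast (-)$ on sections is $R$-linear, giving that $\rho \co d_1^*F \to d_0^*F$ is an isomorphism of sheaves of $R$-modules (one can check $R$-linearity stalkwise using the stalk computation $d_i^*(F)_{(g,x)} \cong F_x$ or $F_{g^{-1}x}$ recalled after Definition~\ref{defn:cocyclesheaf}). Finally, a morphism on either side is exactly a set-level morphism that is additionally $R$-linear on every stalk (or on every section module), and the set-level equivalence already matches set-level morphisms, so it restricts to a bijection on the $R$-linear ones; the cocycle square and the condition $p'f = p$ are unchanged.

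The last bookkeeping step is to note that these $R$-enriched functors are still mutually (naturally) inverse: the natural isomorphisms exhibiting the equivalence at the level of sets are built from the canonical maps $F(U) \cong \Gamma(U,E)$ (Corollary~\ref{cor:sectionsaresections}) and the identification of $E$ with the sheaf space of its own sheaf of sections, and each of these canonical maps is visibly $R$-linear, so the same natural isomorphisms serve in $\Reqsheaf{G}{X}{R}$. I do not expect any real obstacle here: the only mild subtlety is Point~\ref{item:sheafcomb}, i.e.\ remembering that in Definition~\ref{defn:eqsheafRmod} the $R$-linearity of $g \co E_x \to E_{gx}$ is an \emph{extra} axiom rather than a consequence of the space-level and sheaf-level structures (as the note after that definition stresses), and checking that this axiom is precisely what matches the requirement that $\rho$ be a morphism of sheaves of $R$-modules rather than merely of sheaves of sets. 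So the proof is essentially: ``invoke Theorem~\ref{thm:eqsheafequiv} and observe that the equivalence is compatible with all the $R$-module structure,'' and I would write it at roughly that level of detail, perhaps spelling out the stalkwise $R$-linearity check for $\rho$ as the one nontrivial verification.
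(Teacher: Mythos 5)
Your proposal is correct. Note, however, that the paper does not actually give an argument for Theorem~\ref{thm:eqsheafequivRmod}: it simply defers to Sugrue \cite[Theorem 4.1.5]{sugruethesis} and K\k{e}dziorek \cite[Lemma 7.0.3]{kedziorekthesis}, where the comparison functors are constructed directly for sheaves of $\bQ$-modules. Your route --- take the set-level equivalence of Theorem~\ref{thm:eqsheafequiv} as given and check that the espace \'etal\'e and sheaf-of-sections functors transport the $R$-module structure in both directions --- is the natural way to organise the missing proof, and it correctly isolates the one point where something could go wrong, namely that $R$-linearity of $g \co E_x \to E_{gx}$ in Definition~\ref{defn:eqsheafRmod} is an axiom rather than a consequence, and that it corresponds exactly to $\rho$ being a morphism of sheaves of $R$-modules rather than merely of sets. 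Two small things worth making explicit if you write this up: first, when you say the stalks being $R$-modules makes $E$ a sheaf space of $R$-modules, you are implicitly using the standard non-equivariant fact that the sheaf space of a sheaf of $R$-modules has continuous subtraction and scalar multiplication (cf.\ the criterion recalled in Appendix~\ref{Sec:sheavespresheaves}); second, the genuinely non-formal content of the equivalence --- continuity of the resulting $G$-action on $E$ --- lives entirely at the set level and is therefore already absorbed by your appeal to Theorem~\ref{thm:eqsheafequiv}, which is exactly why the $R$-linear layer reduces to bookkeeping.
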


The earlier results on $G$-sheaves of sets hold in this new setting.  
Lemma \ref{lem:discretestalks} implies that the stalk of a $G$-sheaf $F$ at $x \in X$ is a 
discrete $R[\stab_G(x)]$-module. 
Similarly, the analogue of Corollary \ref{cor:sectionsaresections} holds. 
This corollary implies the existence of a $G$-equivariant global zero section. 
That is, for $F$ a $G$-sheaf of $R$-modules over a profinite space $X$, with sheaf space $E$, 
the element $0 \in F(X)$ must correspond to a continuous $G$-equivariant section $0 \co X \to E$. 
Proposition \ref{prop:compactdiscrete} and Corollary \ref{cor:sectionquot} imply that any section is locally sub-equivariant and
$F(U)$ is a discrete $R[\stab_G(U)]$-module whenever $U$ is a compact and open subset of $X$.

The definition of a $G$-presheaf is easily adapted to the case of $R$-modules. 

\begin{definition}\label{def:Gpresheafrmod}
For $X$ a profinite $G$-space with open-closed basis
$\mcB$, a \emph{$G$-equivariant presheaf of $R$-modules over $\mcB$} is 
a functor 
\[
F \co \mcO_G(\mcB)^{op} \lra R \leftmod
\]
such that the action of $\mcO_G(\mcB)^{op}(U,U)=\stab_G(U)$ on $F(U)$ is 
continuous. 

A map of $G$-presheaf of $R$-modules over $\mcB$ is a natural transformation.
\end{definition}

The analogues of Theorems \ref{thm:equivariantsheafspace} and  \ref{thm:equisheafifyfunctor}
hold in the case of $R$-modules.

\begin{example}\label{ex:pointconstantsheafrmod}
As with Examples \ref{ex:onepointsheafsets} and \ref{ex:constantsheafset},  
if we let $X$ denote the trivial one point $G$-space, then a $G$-sheaf of $R$-modules 
over $X$ is a discrete $R[G]$-module.

Let $M$ be a discrete $R[G]$-module (equipped with the discrete topology where needed), then the constant
\emph{equivariant constant sheaf} at $M$ over $X$ is defined by the projection onto the $X$ factor, 
which is a local homeomorphism
\[
p \co M \times X \lra X.
\]
The $G$-action on $M \times X$ is the diagonal action
\[
g(m,x) = (gm, gx)
\]
which is continuous as $M$ is discrete.
\end{example}

As in the non-equivariant setting, the constant sheaf functor is
part of an adjunction with the global sections functor.
The simplest proof makes use of our definition of equivariant presheaves.

\begin{lemma}
Let $X$ be a profinite $G$-space, then the constant sheaf functor $\const$ is 
left adjoint to the global sections functor
\[
\adjunction{\const}{\Gmoddisc{R}{G}}{\Reqsheaf{G}{X}{R}}{\Gamma(X,-).}
\]
\end{lemma}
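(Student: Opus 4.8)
The plan is to establish the adjunction by exhibiting a natural bijection
\[
\Reqsheaf{G}{X}{R}(\const M, F) \;\cong\; \Gmoddisc{R}{G}(M, \Gamma(X,F))
\]
for $M$ a discrete $R[G]$-module and $F$ a $G$-sheaf of $R$-modules over $X$, and to do this by working on the presheaf side where the constant sheaf has a transparent description. First I would recall that, by the $R$-module analogue of Theorem \ref{thm:equisheafifyfunctor}, the constant sheaf $\const M$ is the equivariant sheafification of the constant presheaf $\underline{M}$ on the open-closed basis $\mcB$, namely the functor $\mcO_G(\mcB)^{op} \to \modcat{R}$ sending every nonempty $U \in \mcB$ to $M$, sending an inclusion $e \co U \to V$ to $\id_M$, and sending a translation $g \co U \to gU$ to the action of $g$ on $M$; one checks this is a genuine $G$-presheaf of $R$-modules since $M$ is discrete, so $\stab_G(U)$ acts continuously on it. (One should handle the empty set and the locally-constant vs constant subtlety exactly as in the non-equivariant case; by Remark \ref{rmk:basischoice} the choice of $\mcB$ does not matter.) Then the adjunction of Theorem \ref{thm:equisheafifyfunctor} reduces the claim to the natural isomorphism
\[
\seteqpresheaf{G}{\mcB}\text{-maps }\; \underline{M} \to F \;\cong\; \Gmoddisc{R}{G}(M, F(X)),
\]
where on the right $F$ now denotes the underlying presheaf restricted to $\mcB$ (with $F(X) = \Gamma(X,F)$ by the $R$-module analogue of Corollary \ref{cor:sectionsaresections}).

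The second step is to verify this presheaf-level bijection directly. A natural transformation $\varphi \co \underline{M} \to F$ assigns to each $U \in \mcB$ an $R$-linear map $\varphi_U \co M \to F(U)$; naturality with respect to inclusions forces $\varphi_U$ to be the composite of $\varphi_X$ with the restriction $F(X) \to F(U)$, so $\varphi$ is determined by $\varphi_X \co M \to F(X)$. Conversely any $R$-linear map $M \to F(X)$ extends to such a $\varphi$ by composing with restrictions, and naturality with respect to translation maps $g \co U \to gU$ is exactly the statement that $\varphi_X$ is $R[G]$-linear — here one uses that on $\underline M$ the translation acts as $g$ on $M$, while on $F$ it is the equivariant structure, together with the compatibility diagram displayed just before Theorem \ref{thm:equivariantsheafspace} (restriction maps are equivariant). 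Finiteness of the continuity condition is automatic: the image lands in $F(X)$, which is a discrete $R[G]$-module, so no further constraint appears. This gives the bijection, and naturality in $M$ and $F$ is immediate from the construction.

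The one genuine point requiring care — and the place I expect the only real friction — is the interaction between ``constant'' and ``locally constant'' presheaves, i.e. that the sheafification of the constant presheaf $\underline M$ really is the sheaf space $M \times X \to X$ of Example \ref{ex:pointconstantsheafrmod} with its diagonal $G$-action, and that its global sections recover $M$ (not $\prod_{\text{components}} M$). Since $X$ is profinite it is totally disconnected, so this subtlety is present even non-equivariantly; however, for the adjunction one does not actually need $\Gamma(X, \const M) = M$ — one only needs the universal property of sheafification plus the presheaf computation above, and the diagonal $G$-action on $M \times X$ is continuous precisely because $M$ is discrete. So I would phrase the proof to route entirely through the presheaf adjunction of Theorem \ref{thm:equisheafifyfunctor}, invoking ``the same argument as the non-equivariant case, carrying the $G$-action along'' for the identification of $\sheafify\, \underline M$ with $\const M$, and then give the short direct verification of the presheaf-level bijection as the substantive content. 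Checking the triangle identities (equivalently, naturality of the bijection) is then a routine diagram chase that I would leave to the reader or dispatch in a sentence.
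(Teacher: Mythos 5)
Your proposal is correct and follows essentially the same route as the paper's proof: identify $\const M$ as the equivariant sheafification of the constant equivariant presheaf, compose with the sheafification adjunction of Theorem \ref{thm:equisheafifyfunctor}, and observe (via Corollary \ref{cor:sectionquot}) that global sections land in discrete $R[G]$-modules. The paper simply leaves the presheaf-level bijection as ``analogous to the non-equivariant case,'' whereas you write it out; the extra care you take with the locally-constant subtlety and the translation-naturality check is consistent with, not divergent from, the paper's argument.
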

\begin{proof}
By Corollary \ref{cor:sectionquot}, the global sections of an equivariant sheaf 
are a discrete $R[G]$-module, so the right adjoint lands in the desired category. 

The adjunction argument is analogous to the non-equivariant case. 
The constant equivariant sheaf is the sheafification of 
the constant equivariant presheaf. The constant equivariant presheaf functor 
is left adjoint to the global sections functor. 
\end{proof}

Our next task is to show that the category of $G$-sheaves of $R$-modules
is an abelian category. We start by defining an abelian group 
structure on $\hom(-,-)$, the set of 
maps of $G$-sheaves of $R$-modules. 

\begin{definition}\label{def:sheafaddition}
Let $(E,p)$ and $(F,p^{\prime})$ be two $G$-sheaves over $X$. Then we have:
\begin{enumerate}
\item A zero morphism $0 \colon E\lra F$, given by the map $p \colon E \to X$ followed 
by the global zero section $X \to F$. 

\item If $f,f'\in \hom(E,F)$ we define $f-f'$ to be the morphism where:
\begin{align*}
f+f' \colon E&\lra F\\s_x&\longmapsto f(s_x)-f'(s_x) \in F_x.
\end{align*}
\end{enumerate}
\end{definition}

\begin{proposition}
Let $E$ and $F$ be $G$-sheaves of $R$-modules. 
If $f,f'\in \hom(E,F)$ then $f-f'$ is an element of $\hom(E,F)$ and 
the zero map acts as a unit. 

Hence, the category of $G$-sheaves of $R$-modules is additive. 
\end{proposition}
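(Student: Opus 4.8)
The plan is to verify that the pointwise difference $f - f'$ defined in Definition \ref{def:sheafaddition} actually lands in $\hom(E,F)$, and then to check the additive category axioms. For the first part, I would argue that $f - f'$ is a morphism of $G$-sheaves of $R$-modules by checking the three conditions of Definition \ref{defn:eqsheafRmod} in turn. Continuity: since $f$ and $f'$ are continuous and the sheaf space $F$ has fibrewise subtraction which is continuous (the subtraction map $F \times_X F \to F$ is continuous because $F$ is a sheaf space of $R$-modules, i.e. the group operations are continuous over $X$), the composite $x \mapsto s_x \mapsto f(s_x) - f'(s_x)$ is continuous; compatibility with $p$ is immediate as $f(s_x)$ and $f'(s_x)$ both lie over $x$. $G$-equivariance: for $g \in G$ and $s_x \in E_x$, one computes $(f-f')(g s_x) = f(g s_x) - f'(g s_x) = g f(s_x) - g f'(s_x) = g(f(s_x) - f'(s_x)) = g((f-f')(s_x))$, using $G$-equivariance of $f$ and $f'$ and the fact (from Definition \ref{defn:eqsheafRmod}(\ref{item:sheafcomb})) that $g \co E_x \to E_{gx}$ and $g \co F_x \to F_{gx}$ are $R$-module maps. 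Fibrewise $R$-linearity: $(f-f')_x = f_x - f'_x$ is a difference of $R$-module maps $E_x \to F_x$, hence an $R$-module map.

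Next I would establish that $\hom(E,F)$ is an abelian group under this operation. The zero morphism of Definition \ref{def:sheafaddition}(1) is a morphism of $G$-sheaves: it factors through the global zero section, which is $G$-equivariant and continuous by the remarks following Theorem \ref{thm:eqsheafequivRmod} (this is exactly the existence of the $G$-equivariant global zero section noted there). Associativity, commutativity, existence of inverses and the neutrality of $0$ all follow stalkwise, since on each stalk the operation is the honest abelian group operation of $\Hom_{R[\stab_G(x)]}(E_x, F_x)$ (or just $\Hom_R(E_x,F_x)$ as abelian groups), and a morphism of $G$-sheaves is determined by its collection of stalk maps. In particular $f - f' = 0$ in $\hom(E,F)$ iff $f_x = f'_x$ for all $x$, iff $f = f'$.

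It then remains to check that composition is bilinear over this addition, i.e. $(f_1 + f_2) \circ h = f_1 \circ h + f_2 \circ h$ and $k \circ (f_1 + f_2) = k \circ f_1 + k \circ f_2$, which again is immediate stalkwise since each stalk map is $R$-linear (additivity of composition in the module category), and that finite products (equivalently finite coproducts) exist. For the biproduct of $E$ and $F$ one takes the fibre product sheaf space $E \times_X F$ with the diagonal $G$-action $g(e,f) = (ge, gf)$ and fibrewise $R$-module structure $(E\times_X F)_x = E_x \oplus F_x$; the projections and inclusions are morphisms of $G$-sheaves of $R$-modules by the same three-point check as above, and the biproduct identities hold stalkwise. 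This gives an additive category.

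The main obstacle, such as it is, is making sure the pointwise-defined difference is genuinely continuous as a map of sheaf spaces — this is where one must invoke that $F$, being a sheaf space of $R$-modules (Definition \ref{defn:eqsheafRmod}(\ref{item:sheafab})), carries a continuous fibrewise subtraction, so that $f - f'$ is the composite of the continuous map $(f, f') \co E \to F \times_X F$ with continuous subtraction. Everything else reduces cleanly to stalkwise statements in the category of $R$-modules, using that a map of $G$-sheaves is detected on stalks and that the stalk functors are additive.
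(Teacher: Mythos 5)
Your proposal is correct and follows essentially the same route as the paper: continuity of $f-f'$ is obtained by factoring it through the fibre product $F\times_X F$ and the continuous fibrewise subtraction map (the paper writes this as $m\circ(f\,\pi\,f')\circ\iota$, citing Tennison for continuity of the diagonal and subtraction maps), with equivariance and compatibility with the projections checked by the same direct stalkwise computation. Your write-up is slightly more explicit than the paper's about fibrewise $R$-linearity and the construction of biproducts (which the paper defers to its later limit/colimit constructions), but there is no substantive difference in the argument.
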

\begin{proof}
Observe that the map $f-f'$ is given by the composition 
$m\circ \left(f \pi f'\right)\circ\iota$ where:
\[
E\pi E  = \left\lbrace (e,e^{\prime})\in E\times E\mid p(e)=p(e^{\prime})\right\rbrace  
\]
\[
\begin{array}{rclrcl}
\iota \colon E &\lra&  E\pi E, & e  &\longmapsto & (e,e) \\
f \pi f' \colon E \pi E  &\lra&  E\pi E, &  (e,e')  &\longmapsto & (f(e),f'(e')) \\
m \colon E\pi E  &\lra&    E, &  (e,e^{\prime})  &\longmapsto & e-e^{\prime}.
\end{array}
\]
The maps $\iota$ and $m$ are continuous, see \cite[Section 2.5]{tenn}.
Since $f \pi f'$ is continuous, it follows that $f-f'$ is a continuous map.

We see that $f-f'$ is a $G$-equivariant map of sheaf spaces as
\begin{align*}
p^{\prime}((f-f')(s_x)) &= p^{\prime}(f(s_x)-f'(s_x))=x=p(s_x) \\
g((f-f')(s_x))&= g(f(s_x)-f'(s_x))=f(g(s_x))-f'(g(s_x))=(f-f')(g(s_x))
\end{align*}
for all $g \in G$. 
It follows that the zero map is a unit for this addition and $\hom$ is bilinear,
see also \cite[Section 3.2]{tenn}. This implies that the category is additive.
\end{proof}

\begin{theorem}\label{Gsheafabelian}
The category of $G$-sheaves of $R$-modules over $X$ is an abelian category.
\end{theorem}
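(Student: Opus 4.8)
The plan is to verify the Abelian category axioms on top of the additive structure established in the preceding proposition. The key observation is that for $G$-sheaves of $R$-modules, kernels, cokernels and images can all be constructed by first forgetting the $G$-action, performing the corresponding (non-equivariant) sheaf-theoretic construction, and then checking that the canonical $G$-action descends. This is possible precisely because, by Theorem \ref{thm:eqsheafequivRmod}, a $G$-sheaf of $R$-modules is the same as a sheaf of $R$-modules whose sheaf space carries a compatible continuous $G$-action, and the relevant constructions on sheaf spaces are functorial and hence transport the $G$-action automatically.

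First I would recall that the category of (non-equivariant) sheaves of $R$-modules over $X$ is Abelian, with kernels computed section-wise (equivalently stalk-wise) and cokernels obtained by sheafifying the section-wise cokernel presheaf. Then, given a morphism $f \co E \to F$ of $G$-sheaves, form the kernel sheaf $\ker f$ and cokernel sheaf $\cok f$ non-equivariantly. Since $\ker f$ is a subsheaf of $E$ defined stalk-wise by $(\ker f)_x = \ker(f_x \co E_x \to F_x)$, and each $g \co E_x \to E_{gx}$ restricts to $g \co (\ker f)_x \to (\ker f)_{gx}$ because $f$ is $G$-equivariant (so commutes with the stalk maps $g$), the subspace $\ker f \subseteq E$ is $G$-invariant and inherits a continuous $G$-action with $R$-linear stalk maps; thus $\ker f$ is a $G$-sheaf of $R$-modules and the inclusion is a morphism of such. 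Dually, for the cokernel one uses the equivariant sheafification of Section \ref{sec:equivpresheaves}: the section-wise cokernel presheaf $U \mapsto \cok(F(U) \to \cdots)$ is naturally a $G$-equivariant presheaf of $R$-modules (the stabiliser actions on sections are continuous, being quotients of the continuous actions on $F(U)$ by Corollary \ref{cor:sectionquot}), and applying the equivariant sheafification functor of Theorem \ref{thm:equisheafifyfunctor} produces a $G$-sheaf which, after forgetting $G$, is the ordinary cokernel sheaf; the universal properties are inherited from the non-equivariant universal properties together with the adjunction of Theorem \ref{thm:equisheafifyfunctor}.

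It then remains to check the final Abelian axiom: that for every monomorphism $f$ we have $f = \ker(\cok f)$ and for every epimorphism $f$ we have $f = \cok(\ker f)$, equivalently that the canonical map $\coker(\ker f \to E) \to \ker(F \to \cok f)$ is an isomorphism. Because the forgetful functor to non-equivariant sheaves of $R$-modules is faithful, exact on the constructions just described, and detects isomorphisms (a $G$-equivariant map of sheaf spaces which is a homeomorphism has continuous inverse automatically, and the inverse is again $G$-equivariant), this follows immediately from the corresponding fact in the non-equivariant Abelian category of sheaves of $R$-modules over $X$.

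The main obstacle is bookkeeping rather than mathematical depth: one must make sure that the equivariant sheafification really does compute the underlying non-equivariant cokernel sheaf (i.e.\ that forgetting $G$ commutes with sheafification), and that all the canonical comparison maps are $G$-equivariant. The first point is handled by inspecting the construction in Theorem \ref{thm:equivariantsheafspace}, whose underlying space and topology are literally the usual sheafification; the second is a diagram chase using that the stalk maps $g$ are natural in the sheaf. Given these, every step reduces to the known non-equivariant statement.

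\begin{proof}
By Theorem \ref{thm:eqsheafequivRmod} we may work with the sheaf space description: a $G$-sheaf of $R$-modules is a sheaf space of $R$-modules $p \co E \to X$ equipped with a compatible continuous $G$-action whose stalk maps $g \co E_x \to E_{gx}$ are $R$-linear. We have already shown the category is additive, so it remains to construct kernels and cokernels and to verify that monomorphisms are kernels and epimorphisms are cokernels.

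Let $f \co E \to F$ be a morphism of $G$-sheaves of $R$-modules. Forgetting the $G$-action, the category of sheaves of $R$-modules over $X$ is abelian; let $k \co K \to E$ be the (non-equivariant) kernel of $f$, realised as the subsheaf with $K_x = \ker(f_x)$. Since $f$ is $G$-equivariant, for each $g \in G$ the square relating $g \co E_x \to E_{gx}$ and $g \co F_x \to F_{gx}$ commutes with $f_x$ and $f_{gx}$, so $g$ carries $K_x$ into $K_{gx}$. Hence $K \subseteq E$ is a $G$-invariant subspace; it is open in $E$ (being a kernel of a map of sheaf spaces of $R$-modules it is open by the non-equivariant theory), so it is itself a sheaf space with the subspace topology and the restricted $G$-action is continuous, with $R$-linear stalk maps. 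Thus $K$ is a $G$-sheaf of $R$-modules and $k$ is a morphism of such. Its universal property among $G$-morphisms follows from the non-equivariant universal property: a $G$-morphism $h \co E' \to E$ with $fh = 0$ factors uniquely through $k$ as a map of sheaves, and that factorisation is automatically $G$-equivariant by uniqueness.

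For the cokernel, consider the assignment $U \mapsto \cok\big(F(U) \to (\cok f)^{\mathrm{pre}}(U)\big)$, more precisely the presheaf $P$ on an open-closed basis $\mcB$ given by $P(U) = F(U)/\im(f_U)$. By Corollary \ref{cor:sectionquot} each $F(U)$ is a discrete $R[\stab_G(U)]$-module for $U \in \mcB$, hence so is the quotient $P(U)$, and the restriction and translation maps are equivariant because they are for $F$; thus $P$ is a $G$-equivariant presheaf of $R$-modules over $\mcB$ in the sense of Definition \ref{def:Gpresheafrmod}. Applying the equivariant sheafification functor $\sheafify$ of Theorem \ref{thm:equisheafifyfunctor} (in its $R$-module form) yields a $G$-sheaf of $R$-modules $C = \sheafify P$. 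Forgetting the $G$-action, the underlying space and topology of $C$ are exactly those of the ordinary sheafification of $P$, which is the non-equivariant cokernel sheaf of $f$; in particular the canonical map $q \co F \to C$ underlies the non-equivariant quotient map. The universal property of $C$ among $G$-morphisms $F \to E'$ with $qf = 0$ follows by combining the non-equivariant universal property of the cokernel sheaf with the sheafification adjunction of Theorem \ref{thm:equisheafifyfunctor}, uniqueness again forcing equivariance.

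Finally, the forgetful functor $U \co \Reqsheaf{G}{X}{R} \to \Rsheaf{X}{R}$ is faithful, sends the kernels and cokernels constructed above to the corresponding non-equivariant kernels and cokernels, and reflects isomorphisms: a $G$-equivariant morphism of sheaf spaces of $R$-modules which is a homeomorphism on underlying spaces has a continuous inverse, and that inverse is again $G$-equivariant and $R$-linear on stalks. Given a monomorphism $f$ in $\Reqsheaf{G}{X}{R}$, the canonical comparison map $\coker(\ker f \to E) \to \ker(F \to \cok f)$ is $G$-equivariant, and $U$ sends it to the analogous comparison map for $U(f)$, which is an isomorphism because $\Rsheaf{X}{R}$ is abelian; hence it is an isomorphism in $\Reqsheaf{G}{X}{R}$. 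The dual statement for epimorphisms is identical. Therefore $\Reqsheaf{G}{X}{R}$ is abelian.
\end{proof}
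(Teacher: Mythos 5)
Your proof is correct and follows essentially the same route as the paper: kernels are built as $G$-invariant open subspaces of the sheaf space (the paper's Construction \ref{limconstruct}), cokernels as the equivariant sheafification of the presheaf cokernel (the paper's Construction \ref{sheafcolim}), and normality is transferred from the non-equivariant case, which the paper handles by citing Tennison. The only difference is that you spell out explicitly what the paper delegates to those constructions and to the reference.
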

\begin{proof}
The category of $G$-sheaves of $R$-modules is an additive category via the addition on 
$\hom$ defined earlier. 
The constructions of (finite) limits and colimits is given in 
Constructions \ref{sheafcolim} and \ref{limconstruct}. 
From that work it is clear that finite product and finite coproducts 
agree. Moreover, kernels and cokernels exist. 
It remains to prove that the category is normal, this follows from Tennison \cite[Theorem 2.4.13]{tenn}.
\end{proof}

\section{Colimits and limits of equivariant sheaves and presheaves}\label{sec:colimitssheaves}

We construct colimits and limits of $G$-equivariant presheaves of $R$-modules
then apply this to construct colimits and limits of  $G$-sheaves of $R$-modules.
Point set models for the constructions can be found in  \cite[Sections 4.2 and 4.4]{sugruethesis}.
As every limit is an equaliser of products, it suffices to construct finite limits and 
infinite products. 
We do this separately, as infinite products are much more difficult to construct than finite limits. 

To distinguish presheaf products from sheaf products, we use a superscript $P$ and $S$.
\begin{lemma}\label{lem:Reqpresheafcolimitslimits}
The category of $G$-presheaves of $R$-modules has all colimits, given by taking the termwise colimit. 
Finite limits are constructed termwise. 

If $\{F_i\}_{i \in I}$ is a set of $G$-presheaves of sets over $\mcB$, 
with $I$ an indexing set, then on $U \in \mcB$
\[
\Big( {\prod_{i \in I}}^P F_i \Big) (U) = \disc \prod_{i \in I} F_i(U)
\]
is the infinite product in the category of $G$-presheaves of sets. 
\end{lemma}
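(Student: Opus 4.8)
The plan is to verify each assertion in turn, reducing everything to the non-equivariant theory of presheaves of $R$-modules combined with the continuity condition built into Definition \ref{def:Gpresheafrmod}. The first two claims — that all colimits and finite limits of $G$-presheaves of $R$-modules are computed termwise — are essentially formal. For a diagram $\{F_j\}$ in $\seteqpresheaf{G}{\mcB}$ (taking $R$-modules in the statement; the set-level version in the last display is the same argument with $R$-modules replaced by sets), one forms the termwise colimit $G(U) = \colim_j F_j(U)$ in $R\leftmod$, with the functoriality in $\mcO_G(\mcB)^{op}$ induced by universal property. The only thing to check beyond the non-equivariant case is that the $\stab_G(U)$-action on $G(U)$ is again \emph{continuous}, i.e.\ discrete: but a filtered-or-arbitrary colimit of discrete $\stab_G(U)$-modules is discrete, since every element of $\colim_j F_j(U)$ is represented by an element of some $F_j(U)$, which is fixed by an open subgroup, and that subgroup then fixes the image. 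For finite limits the same remark applies: a finite limit of discrete $\stab_G(U)$-modules, computed as a submodule of a finite product, is discrete because the relevant stabilizer is the intersection of finitely many open subgroups, hence open. Naturality of the universal maps then shows these termwise constructions carry the required universal property in $\seteqpresheaf{G}{\mcB}$.

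Next I would treat the infinite product formula $\big({\prod_i}^P F_i\big)(U) = \disc \prod_i F_i(U)$, where $\disc$ denotes "equip with the discrete topology" (equivalently, in the module setting, take the submodule on which the $\stab_G(U)$-action is smooth/discrete). Here the subtlety is exactly the one flagged in the text: the honest product $\prod_i F_i(U)$ of discrete $\stab_G(U)$-sets need not be discrete, so it is not an object of $\seteqpresheaf{G}{\mcB}$. The fix is to take $\disc \prod_i F_i(U) := \{ (s_i)_i \in \prod_i F_i(U) : \text{the } \stab_G(U)\text{-orbit of } (s_i)_i \text{ is finite} \}$, equivalently the set of tuples stabilized by some open subgroup of $\stab_G(U)$. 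One checks this is a sub-$\stab_G(U)$-set (it is closed under the action), that it is discrete by construction, and that the restriction maps $F(e)\colon F(V)\to F(U)$ and translations $g\colon F(U)\to F(gU)$ induced componentwise from the $F_i$ send discrete tuples to discrete tuples (a tuple fixed by open $N$ maps to a tuple fixed by the open subgroup $N \cap g N g^{-1}$ or similar), so $\disc\prod_i F_i$ is a well-defined $G$-presheaf over $\mcB$.

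Finally, I would verify the universal property: a map of $G$-presheaves $H \to \disc\prod_i F_i$ over $\mcB$ is the same as a family of maps $H \to F_i$. Given such a family, componentwise assembly gives, for each $U$, a $\stab_G(U)$-equivariant map $H(U) \to \prod_i F_i(U)$; since $H(U)$ is discrete, every element has finite orbit, so its image does too, and the map lands in $\disc\prod_i F_i(U)$. This assignment is natural in $\mcO_G(\mcB)$ and is evidently inverse to composition with the projections, giving the product property. \textbf{The main obstacle} is not any single hard computation but rather pinning down the right definition of $\disc$ on an infinite product and checking it is stable under all the structure maps of $\mcO_G(\mcB)$ — in particular making sure the discreteness (smoothness) condition is preserved by translations, not just by restrictions — so that $\disc\prod_i F_i$ genuinely lands in $\seteqpresheaf{G}{\mcB}$; once that is in place, the universal property falls out of the discreteness of the source exactly as above.
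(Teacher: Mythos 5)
Your proposal is correct and follows essentially the same route as the paper: termwise constructions for colimits and finite limits, and for infinite products the discrete part of the termwise product, with the universal property obtained from the discreteness of the source exactly as you describe (the paper's proof is in fact terser, only writing out the product case). One small caution: define $\disc$ as the set of tuples fixed by an open subgroup of $\stab_G(U)$ (matching the paper's discretisation functor) rather than those with finite orbit --- for a general profinite group a finite-index stabiliser need not be open, so the two conditions are not equivalent and the finite-orbit version need not be a discrete $\stab_G(U)$-set, though your argument goes through verbatim with the open-subgroup formulation.
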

\begin{proof}
The most important case is that of infinite products.
Given maps $f_i \colon E \to F_i$ of $G$-presheaves of $R$-modules
we have a map of $R$-modules
\[
f \co E(U) \lra \prod_{i \in I} F_i(U).
\]
By Corollary \ref{cor:sectionquot}, $E(U)$ is a discrete $\stab_G(U)$-module, so 
the image of $f$ must be in the discrete part of the product giving a map
\[
f' \co E(U) \lra \disc \prod_{i \in I} F_i(U).
\]
The monomorphism
\[
\disc \prod_{i \in I} F_i(U) \lra \prod_{i \in I} F_i(U)
\]
implies that composing $f'$ with the projection maps recovers the $f_i$. 
Hence the map $f'$ is unique. 
\end{proof}

The sheafification functor allow us to construct colimits of diagrams of $G$-sheaves of $R$-modules
in terms of the presheaf colimit.

\begin{construction}\label{sheafcolim}
Suppose we have a diagram $F^i$ of $G$-sheaves of $R$-modules (with maps omitted from the notation). 
We may construct the colimit $F$ of $F^i$ in the category of $G$-presheaves of $R$-modules
as earlier. 

The sheaf $\sheafify F$ is the colimit of the diagram $F^i$ in the 
category of $G$-sheaves of $R$-modules. 
As with the non-equivariant case, the stalks of a colimit are the colimits of the stalks. 
\end{construction}

Finite limits of sheaves of $R$-modules over a base space $X$ may be constructed 
by taking the limit in the category of sheaves of sets, then using the induced $R$-action on the stalks. 
Since the limit is finite, the stalks of the limit are exactly the limits of the stalks, so this $R$-action 
is well-defined. 
If $E_i$ is a diagram of sheaves (with maps omitted) and $E$ the limit, then the sheaf space
of $E$ is given by taking the limit of the diagram $E_i$, in the category of spaces over $X$. 
To illustrate, consider the case of a product of two sheaves of $R$-modules over $X$, 
$p \co E \to X$ and $p' \co E' \to X$. The sheaf space of the product is the pullback diagram 
given by $p$ and $p'$, with limit
\[
E \underset{X}{\times} E'  = \{ (e,e') \mid p(e)=p'(e') \}
\]
whose map to $X$ is given by $(e,e') \mapsto p(e)=p'(e')$. 

\begin{construction}\label{limconstruct}
Finite limits of $G$-sheaves of $R$-modules can be constructed following a similar process 
to the non-equivariant case, 
but now the sheaf space of the limit is constructed in the category of $G$-spaces over $X$. 

Equally, we can use Definition \ref{defn:rmodulecocyclesheaf} as our starting point. 
Since the pullback functor of Definition \ref{defn:pullpushfunctors}
preserves finite limits, a finite limit of $G$-sheaves of $R$-modules 
is given by taking the limit of sheaves of $R$-modules and equipping it with the canonical $G$-action
coming from moving the functors $d_0^*$ and $d_1^*$ past the finite limit. 
\end{construction}

The preceding construction of finite limits does not extend to the infinite case. 
Instead, we give an explicit construction of infinite product of $G$-sheaves
of $R$-modules in Proposition \ref{prop:infproduct}. 
A point-set model can be found in \cite[Section 4.4]{sugruethesis}. 
That model can be compared with 
the point-set construction of finite limits in \cite[Subsection 4.2.2]{sugruethesis},
which makes explicit use of the finiteness assumption.
Formally, the existence of infinite products is already known, as the category of $G$-sheaves
of $R$-modules is a Grothendieck topos by Giraud's theorem.  
See Moerdijk \cite{moer88} and \cite{moer90} for this and related results.

The problem is much easier in the case where $G$ is finite (and hence has the discrete topology). 
With this assumption, the construction of infinite products of non-equivariant sheaves 
can be readily extended to the $G$-equivariant case as the 
continuity of the $G$-action may be verified by looking at the map induced by each $g \in G$. 
In particular, if $G$ is a finite discrete group, then 
infinite products of $G$-sheaves are products taken in 
the category of non-equivariant sheaves with the additional structure of a $G$-action. 
This does not hold when $G$ is allowed to be infinite as the following example demonstrates.

\begin{example}
Let $G=\adic{p}$ be the $p$-adic integers. 
The category of $G$-sheaves of $\bQ$-modules over the 
one-point space is equivalent to the 
category of discrete $\bQ[\adic{p}]$-modules. 
The infinite product in the category of discrete $\bQ[\adic{p}]$-modules is the discretisation
of the infinite product. 
Hence, this must be the product in the category of $G$-sheaves of $\bQ$-modules over the 
one-point space.

The product of non-equivariant sheaves of $\bQ$-modules over the one-point space
is the product of the stalks, equipped with the box topology. 
In this case, the box product is the discrete topology on the infinite product. 
The box product is usually not the discretisation of the infinite product. 
For example, the discretisation of the $\bQ[\adic{p}]$-module
\[
\prod_{n \geqslant 0} \bQ[\bZ/p^n]
\] 
is a proper subset of the product. 
\end{example}

\begin{lemma}\label{lem:infprodalreadysheaf}
Let $X$ be a profinite $G$-space with open-closed basis $\mcB$.
Let $\{F_i\}_{i \in I}$ be a set of $G$-sheaves of $R$-modules.
The presheaf product $F=\prod_{i \in I}^P F_i$ satisfies the sheaf condition as a functor
\[
F \co \mcO_G(\mcB)^{op} \lra R \leftmod.
\]
\end{lemma}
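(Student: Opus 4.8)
The statement to prove is that the presheaf product $F = \prod_{i\in I}^P F_i$, which on a basic open set $U \in \mcB$ is given by $F(U) = \disc\prod_{i\in I} F_i(U)$ (Lemma \ref{lem:Reqpresheafcolimitslimits}), already satisfies the sheaf condition as a functor $\mcO_G(\mcB)^{op} \to R\leftmod$. The key observation is that the sheaf condition is a purely \emph{non-equivariant} condition: it only concerns covers of a compact-open $U$ by compact-open subsets $\{U_\alpha\}$ and the resulting equalizer diagram $F(U) \to \prod_\alpha F(U_\alpha) \rightrightarrows \prod_{\alpha,\beta} F(U_\alpha \cap U_\beta)$, all of which lives in the inclusion part of $\mcO_G(\mcB)$. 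So the equivariant structure plays no direct role, and the only thing one must reconcile is the discretization functor $\disc$ appearing in the formula for the presheaf product.

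**First step: reduce to the non-equivariant sheaf condition.** I would begin by noting that since $X$ is profinite, every compact-open $U$ is itself profinite and compact, so any open cover of $U$ by elements of $\mcB$ admits a finite subcover; moreover one can refine to a finite \emph{partition} of $U$ into disjoint compact-open sets $U_1, \dots, U_n$ (using that $\mcB$ is a basis of clopen sets and $U$ is compact Hausdorff totally disconnected). For such a disjoint finite cover, the intersections $U_j \cap U_k$ are empty for $j \neq k$, and the sheaf condition reduces to the assertion that $F(U) \to \prod_{j=1}^n F(U_j)$ is an isomorphism. Since the category of $R$-modules has the property that any sheaf condition over a general cover follows from the one over disjoint finite covers together with the individual sheaf property of each $F_i$, it suffices to check this finite-disjoint case — or more carefully, to check the equalizer condition for an arbitrary finite cover, which I would handle by the standard argument that a finite cover can be replaced by the disjoint cover generated by all the atoms of the Boolean algebra it generates inside $\mcB$.

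**Second step: pass the product and discretization through the equalizer.** Each $F_i$ is a $G$-sheaf of $R$-modules, so (as a non-equivariant sheaf) it satisfies $F_i(U) \cong \prod_{j=1}^n F_i(U_j)$ for a disjoint finite cover. Taking the product over $i \in I$ gives $\prod_i F_i(U) \cong \prod_i \prod_j F_i(U_j) \cong \prod_j \prod_i F_i(U_j)$, i.e. the ordinary (non-discretized) presheaf product is already a sheaf. The remaining point is to show that applying $\disc$ commutes with this finite product: $\disc \prod_i F_i(U) \cong \disc \prod_j \prod_i F_i(U_j) \cong \prod_j \disc \prod_i F_i(U_j)$. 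Here I would invoke that $\disc$, as a functor from $\stab_G(U)$-modules to discrete $\stab_G(U)$-modules, is a right adjoint (it is the right adjoint to the inclusion of discrete modules), hence preserves products; one must only be mildly careful that the stabilizer groups $\stab_G(U_j)$ all contain $\stab_G(U)$ as a finite-index (in fact open) subgroup so the discretizations are compatible, which follows because a disjoint clopen partition of $U$ is permuted by $\stab_G(U)$ and each $\stab_G(U_j)$ is open by Lemma \ref{lem:openstab}. Actually the cleanest route: since $U$ is compact-open and $F_i$ a $G$-sheaf, $F_i(U)$ is already a discrete $\stab_G(U)$-module by Corollary \ref{cor:sectionquot}, so $\disc$ is a no-op on each factor and the subtlety about whether $\disc$ commutes with the \emph{infinite} product is precisely what "$\disc$ is a right adjoint hence preserves all products" resolves — but for the sheaf gluing we only need finite products of the $U_j$'s, and a finite product of discrete modules is discrete, so the discretization is automatic on the cover side. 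This is where I would spend the most care.

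**The main obstacle.** The genuine difficulty is \emph{not} the equivariance but the interaction of $\disc$ with the equalizer diagram: one needs that the equalizer of $\disc\prod_i F_i(U_1)\times\cdots \rightrightarrows \cdots$ computed among discrete modules coincides with the honest sections $\disc\prod_i F_i(U)$. This works because, as just noted, the relevant finite products of discrete modules are discrete, so the equalizer is computed the same way in discrete $R[\stab_G(U)]$-modules as in all $R$-modules, and each $F_i$ being a sheaf does the gluing. I would phrase the proof as: take a cover, refine to a disjoint finite clopen refinement, use that each $F_i$ is a sheaf to glue, and observe all modules in sight are already discrete so nothing is lost — then remark that a general (not-necessarily-disjoint) finite cover is handled by the usual reduction to the disjoint case, and arbitrary covers by compactness of $U$.
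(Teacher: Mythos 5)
Your proof is correct and follows essentially the same route as the paper's: compactness of $U$ reduces the problem to a finite (sub)cover, each $F_i$ satisfies the sheaf condition individually, and the discretisation $\disc$ --- being a filtered colimit of fixed points --- commutes with the resulting finite limit, which is exactly what lets the equaliser condition pass through the formula $F(U)=\disc\prod_{i\in I}F_i(U)$. One caution about your preferred variant: the disjoint clopen refinement of a finite cover need not consist of elements of the given basis $\mcB$ (which is not assumed closed under differences or complements), so the alternative you mention in passing --- checking the equaliser condition directly for the finite subcover, as the paper does --- is the one to use, or else first enlarge $\mcB$ as in Remark \ref{rmk:basischoice}.
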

\begin{proof}
Take an open cover of $U \in \mcO_G(\mcB)$ by sets $V_i \in \mcO_G(\mcB)$, with $i \in I$ an indexing set.
As $U$ is compact, there is a finite subcover $V_{i_1}$, \dots, $V_{i_n}$. 
Consider the patching and separation conditions for this subcover.  
Since the subcover is finite, the limit over $j=1, \dots, n$ is finite and commutes with the filtered colimit
defining $\disc$. Thus we have isomorphisms
\[
\lim_{j} F(V_{i_j}) 
=
\lim_{j} \disc \prod_{i \in I} F_i(V_{i_j}) 
\cong
\disc \lim_{j} \prod_{i \in I} F_i(V_{i_j}) 
\cong 
\disc  \prod_{i \in I} F_i(U) = F(U). 
\]
One can check that this implies the patching and separation conditions for the original cover.
\end{proof}

\begin{remark}\label{rmk:sheafkanonly}
We can apply the sheafification functor to $\prod_{i \in I}^P F_i$ from the preceding lemma. 
As the presheaf product is already a sheaf over $\mcB$, all we are doing is 
taking a right Kan extension of this object, extending the domain from 
$\mcO_G(\mcB)$ to $\mcO_G(X)$. Hence, the value of 
$\sheafify \prod_{i \in I}^P F_i$ on some $U \in \mcB$ 
is given by 
\[
\Big(\sheafify {\prod_{i \in I}}^P F_i \Big) (U)
=
\disc \prod_{i \in I} \left( F_i (U) \right). 
\]
\end{remark}

\begin{proposition}\label{prop:infproduct}
The infinite product in $G$-sheaves of $R$-modules is given by the sheafification of the 
underlying product of $G$-presheaves of $R$-modules and is independent (up to natural isomorphism)
of the choice of basis for the category of presheaves. 

That is, let $\{F_i\}_{i \in I}$ be a set of $G$-sheaves of $R$-modules
and use a superscript $P$ for the presheaf product and a superscript $S$ for the sheaf product, 
then  
\[
F= {\prod_{i \in I}}^S F_i \coloneqq \sheafify {\prod_{i \in I}}^P F_i 
\]
is the product in the category of $G$-sheaves of $R$-modules.
\end{proposition}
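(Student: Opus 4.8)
The plan is to verify the universal property of the product directly, using the adjunction from Theorem \ref{thm:equisheafifyfunctor} together with Lemma \ref{lem:infprodalreadysheaf}. First I would recall that $\textnormal{forget}$ has a left adjoint $\sheafify$, and $\sheafify$ therefore preserves colimits; but products are limits, so I cannot simply transport the presheaf product. Instead, the right approach is to exploit that $\textnormal{forget}$ is fully faithful (Theorem \ref{thm:equisheafifyfunctor}), so that $\seteqsheaf{G}{X}$ (in the $R$-module version) sits inside $\seteqpresheaf{G}{\mcB}$ as a coreflective subcategory via $\sheafify$. For a coreflective subcategory, limits of the subcategory are computed by taking the limit in the ambient category and then applying the coreflector; so the sheaf product is $\sheafify$ applied to the presheaf product $\prod^P_{i\in I} F_i$. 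This is exactly the claimed formula.

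More concretely, I would argue as follows. Let $E$ be any $G$-sheaf of $R$-modules with maps $f_i \co E \to F_i$. Since each $F_i$ is a sheaf and $E$ is a sheaf, and all are objects of the presheaf category via $\textnormal{forget}$, the maps $f_i$ give a map into the presheaf product: there is a unique presheaf map $\tilde f \co E \to \prod^P_{i\in I} F_i$ with $\pi_i \circ \tilde f = f_i$, by Lemma \ref{lem:Reqpresheafcolimitslimits} (in its $R$-module form). Now apply $\sheafify$. Because $E$ is already a sheaf, the unit $E \to \sheafify E$ is an isomorphism; composing $\sheafify(\tilde f)$ with the inverse of this unit yields a $G$-sheaf map $\bar f \co E \to \sheafify \prod^P_{i\in I} F_i$. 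Using Lemma \ref{lem:infprodalreadysheaf}, the presheaf product $\prod^P_{i\in I} F_i$ is already a sheaf over $\mcB$, so the sheafification is just the right Kan extension described in Remark \ref{rmk:sheafkanonly}; in particular the projection maps $\pi_i$ sheafify to the projections $\sheafify \prod^P_{i\in I} F_i \to F_i$ (each $F_i$ being a sheaf, unchanged by $\sheafify$), and one checks $\pi_i \circ \bar f = f_i$. Uniqueness of $\bar f$ follows from uniqueness of $\tilde f$ in the presheaf category together with full faithfulness of $\textnormal{forget}$: any $G$-sheaf map $E \to \sheafify \prod^P_{i\in I} F_i$ commuting with the $\pi_i$ is, after forgetting, a presheaf map commuting with the $\pi_i$, hence equals $\tilde f$ up to the identification, hence is $\bar f$.

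The independence of the basis is then immediate from Remark \ref{rmk:basischoice}: enlarging $\mcB$ to $\mcB'$ replaces the presheaf product by its right Kan extension, whose sheafification is unchanged; and any two bases embed in a common one. I would also remark that this recovers, on compact-opens $U \in \mcB$, the explicit formula $\big(\prod^S_{i\in I} F_i\big)(U) = \disc \prod_{i\in I} F_i(U)$ from Remark \ref{rmk:sheafkanonly}, and that on stalks it gives $\big(\prod^S_{i\in I} F_i\big)_x$ is the discretisation of $\prod_{i\in I} (F_i)_x$ as a $\stab_G(x)$-module.

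The main obstacle I anticipate is not logical depth but bookkeeping: one must be careful that the "presheaf product" really does satisfy the presheaf-level universal property against sheaves (this is Lemma \ref{lem:Reqpresheafcolimitslimits}, but it is stated for presheaves of sets and needs the $R$-module upgrade, which is routine), and one must keep straight the distinction between the presheaf product over $\mcB$ and its Kan extension to $\mcO_G(X)$. The only genuinely substantive input is Lemma \ref{lem:infprodalreadysheaf} — that the presheaf product already satisfies the sheaf condition — which is what makes the sheafification harmless; once that is in hand, the proof is a formal coreflective-subcategory argument and the verification that $\sheafify$ fixes each $F_i$ and hence the projection maps.
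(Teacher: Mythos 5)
Your argument is essentially the paper's: form the map into the presheaf product, sheafify, use Lemma \ref{lem:infprodalreadysheaf} and Remark \ref{rmk:sheafkanonly} to see that sheafification is harmless on the basis, deduce compatibility with the projections and uniqueness from the presheaf universal property together with the fact that a sheaf map is determined on $\mcB$, and get basis-independence from Remark \ref{rmk:basischoice}. The concrete second and third paragraphs of your proposal are correct and carry the proof.

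Two corrections. First, the abstract framing in your opening paragraph has the duality backwards: the adjunction is $\sheafify \dashv \textnormal{forget}$, so the forgetful functor is the \emph{right} adjoint and $G$-sheaves form a \emph{reflective} (not coreflective) subcategory of $G$-presheaves over $\mcB$, with reflector $\sheafify$. The general principle you quote (``limits are computed in the ambient category and then the coreflector is applied'') is the statement for coreflective subcategories; for a reflective subcategory the correct statement is that the subcategory is closed under ambient limits, which is exactly what Lemma \ref{lem:infprodalreadysheaf} verifies here, after which applying $\sheafify$ only performs the Kan extension from $\mcO_G(\mcB)$ to $\mcO_G(X)$. (The paper reserves ``coreflective'' for the Weyl-sheaf inclusion, where $\inc \dashv \Weyl$.) Second, your closing remark that $\bigl(\prod^S_{i\in I} F_i\bigr)_x$ is the discretisation of $\prod_{i\in I}(F_i)_x$ is false in general: the stalk is $\colim_{U \ni x} \disc\prod_{i\in I} F_i(U)$, and the filtered colimit over neighbourhoods does not commute with the infinite product (a tuple of germs need not admit representatives over a common $U$, and a tuple of sections with vanishing germs need not vanish on a common neighbourhood). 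This remark is not used in your argument, but it should be deleted.
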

\begin{proof}
Choose an open-closed  basis $\mcB$ to obtain a suitable category of $G$-presheaves of $R$-modules. 
Let $f_i \co E \lra F_i$ be a map of sheaves for each $i \in I$. 
Then we have a map 
\[
f \co E \lra {\prod_{i \in I}}^P F_i
\]
in the category of $G$-presheaves of $R$-modules (over $\mcB$). 
Moreover, if we compose $f$ with the projection from 
\[
p_i \co {\prod_{i \in I}}^P F_i \to F_i
\]
we recover the map $f_i$ as the presheaf product on $U \in \mcB$ is 
a subset of the product of the $F_i(U)$.   
Applying sheafification gives a map
\[
f' \co E \cong \sheafify E \lra \sheafify {\prod_{i \in I}}^P F_i = F
\]
post-composing $f'$ with $\sheafify p_i$ gives $f_i$.  

We must now show that the map $f'$ is uniquely determined.
By Remark \ref{rmk:sheafkanonly}, the sheafification functor does not change the value 
of the presheaf product on open-closed sets in some basis $\mcB$. 
Hence, on these sets the map $f' \co E \lra F$ is uniquely determined. 
The value of the sheaves $E$ and $F$ on any other open set of $X$
is then determined by the sheaf condition using open sets in the basis $\mcB$
and therefore is also unique up to isomorphism. 

By Remark \ref{rmk:basischoice} we see the construction is (up to natural isomorphism) independent 
of the choice of basis $\mcB$ for the category of $G$-presheaves. 
\end{proof}

\section{Pull backs and push forwards}\label{sec:pbpf}

Recall the definition of pull backs and push forwards from Appendix \ref{Sec:sheavespresheaves}.
We prove that these functors pass to categories of equivariant sheaves of sets. 
These results can also be extended to sheaves of $R$-modules. 
 
\begin{lemma}
Let $f \co X \to Y$ be an equivariant map of profinite $G$-spaces, for $G$ be a profinite group.
If $E$ is a $G$-sheaf space over $Y$, then the \emph{pull back} $f^*(E)$ 
\[
\xymatrix{
f^*(E) \ar[r] \ar[d] &
E \ar[d] \\
X \ar[r]_f & Y
}
\]
is a $G$-sheaf space over $X$.
\end{lemma}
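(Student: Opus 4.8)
The plan is to verify the three conditions of Definition \ref{defn:eqsheafRmod} — or rather, since this lemma concerns sheaves of sets, the two conditions of Definition \ref{defn:eqsheaf}: that $f^*(E) \to X$ is a local homeomorphism with a continuous $G$-action making the projection $G$-equivariant. The key observation is that the pullback $f^*(E)$ is constructed as the fibre product $X \times_Y E = \{(x,e) \mid f(x) = p(e)\}$ with the subspace topology from $X \times E$, so all the structure maps are the evident ones. First I would recall that $X \times_Y E \to X$ is a local homeomorphism; this is the non-equivariant content and holds because local homeomorphisms are stable under pullback (pullback of an étale map is étale), which is standard and recalled in Appendix \ref{Sec:sheavespresheaves}.

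Next I would put a $G$-action on $f^*(E)$ by the diagonal formula $g \cdot (x,e) = (gx, ge)$. This is well-defined on the fibre product because $f$ and $p$ are both $G$-equivariant: $f(gx) = gf(x) = gp(e) = p(ge)$, so $(gx,ge)$ again lies in $X \times_Y E$. The action is continuous since it is the restriction of the diagonal action on $X \times E$, which is continuous as a product of continuous actions, and the subspace $f^*(E)$ is $G$-invariant by the computation just given. The projection $f^*(E) \to X$, $(x,e) \mapsto x$, is then manifestly $G$-equivariant. This shows $(f^*(E), \mathrm{pr}_X)$ satisfies Definition \ref{defn:eqsheaf}, so it is a $G$-sheaf space over $X$.

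I do not anticipate a serious obstacle here: every step is a routine check once one writes $f^*(E)$ as the explicit fibre product and uses equivariance of $f$ and $p$. The one point requiring minor care is confirming that the subspace topology on $f^*(E) \subseteq X \times E$ is the same as the topology making it the sheaf space of the pulled-back sheaf, so that ``local homeomorphism'' and ``continuous $G$-action'' are being checked with respect to the same topology; this is exactly the content of the non-equivariant pullback construction recalled in the Appendix, so it can simply be cited. If one instead wished to argue from Definition \ref{defn:cocyclesheaf}, one would observe that pullback along $f$ commutes with the maps $d_0, d_1$ (since $f$ is $G$-equivariant, these squares commute) and hence transports the isomorphism $\rho$ and the cocycle and identity conditions; but the sheaf-space argument is shorter and I would present that.
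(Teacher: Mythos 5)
Your proposal is correct and follows the same route as the paper, which simply asserts that the fibre product inherits a $G$-action (necessarily the diagonal one) and that the induced projection to $X$ is a $G$-equivariant local homeomorphism; you have merely spelled out the routine verifications the paper leaves implicit. No gaps.
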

\begin{proof}
The space $f^*(E)$ has a $G$-action and the induced map to $X$ is 
a $G$-equivariant local homeomorphism. 
\end{proof}

Arguing from the non-equivariant case, for $x \in X$, there is an isomorphism of stalks
\[
f^*(E)_x \cong E_{f(x)}.
\]
Moreover, if $f$ is open, $f^*(E)(U) = E(f(U))$.

\begin{lemma}
Let $f \co X \to Y$ be an equivariant map of profinite $G$-spaces, for $G$ be a profinite group.
If $F$ is a $G$-sheaf of sets over $X$, then the \emph{push forward sheaf} $f_*(F)$
is a $G$-sheaf of sets over $X$. 
\end{lemma}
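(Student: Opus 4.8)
The plan is to verify the two conditions needed to show that $f_*(F)$ is a $G$-sheaf of sets over $Y$, building on the non-equivariant push forward functor from the appendix. Recall that non-equivariantly $f_*(F)$ is the sheaf over $Y$ associated to the presheaf $V \mapsto F(f^{-1}(V))$; equivalently its sheaf space $f_*(E)$ may be described via this presheaf. First I would define the $G$-action: since $f$ is $G$-equivariant, for an open $V \subseteq Y$ and $g \in G$ we have $f^{-1}(gV) = g f^{-1}(V)$, so the $G$-action on sections of $F$ over the sets $f^{-1}(V)$ induces maps $F(f^{-1}(V)) \to F(f^{-1}(gV))$, i.e. a $G$-equivariant presheaf structure on $V \mapsto F(f^{-1}(V))$ in the sense of Definition \ref{def:gpresheaf} (one must check the stabiliser action $\stab_G(V)$ on $F(f^{-1}(V))$ is continuous, but $\stab_G(V) \leqslant \stab_G(f^{-1}(V))$ and $F(f^{-1}(V))$ is a discrete $\stab_G(f^{-1}(V))$-set by Corollary \ref{cor:sectionquot}, so this is immediate). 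Then equivariant sheafification (Theorem \ref{thm:equivariantsheafspace}) produces a $G$-sheaf over $Y$, and I would identify this with the non-equivariant push forward $f_*(F)$ equipped with this action.

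The key steps, in order, are: (1) observe $f^{-1}(gV) = gf^{-1}(V)$ from equivariance of $f$, so the assignment $V \mapsto F(f^{-1}(V))$ is a functor on $\mcO_G(\mcB_Y)^{op}$ for $\mcB_Y$ an open-closed basis of $Y$; (2) check the continuity/discreteness condition of Definition \ref{def:gpresheaf}, which reduces to Corollary \ref{cor:sectionquot} applied to $F$ as above; (3) apply Theorem \ref{thm:equivariantsheafspace} to get a $G$-sheaf space, and note that forgetting the $G$-action recovers the ordinary push forward sheaf since the underlying presheaf and sheafification construction are unchanged; (4) record the resulting action on stalks — for $y \in Y$ one has $f_*(F)_y = \colim_{y \in V} F(f^{-1}(V))$, and $\stab_G(y)$ acts compatibly, making each stalk a discrete $\stab_G(y)$-set as required by Lemma \ref{lem:discretestalks}. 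Functoriality of $f_*$ and the fact that a morphism of $G$-sheaves over $X$ induces a morphism of $G$-presheaves $V \mapsto F(f^{-1}(V))$ and hence (by Theorem \ref{thm:equisheafifyfunctor}) a morphism of $G$-sheaves over $Y$ completes the verification that $f_*$ is a functor $\seteqsheaf{G}{X} \to \seteqsheaf{G}{Y}$.

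The main obstacle I anticipate is purely bookkeeping: making sure the basis $\mcB_Y$ on $Y$ interacts correctly with $f^{-1}$, since $f^{-1}$ of a compact-open set need not be compact-open unless $f$ is, say, proper or $Y$ is nice. However, since we only need $f^{-1}(V)$ to be open (so that $F(f^{-1}(V))$ makes sense) and $F$ is a sheaf over all open sets of $X$ anyway, this is not a genuine difficulty — one works with the presheaf $V \mapsto F(f^{-1}(V))$ on a basis of $Y$ and invokes Remark \ref{rmk:basischoice} for basis-independence. A second, minor point is confirming that the $G$-action produced by Theorem \ref{thm:equivariantsheafspace} agrees on stalks with the naive action $g \co (f_*F)_y \to (f_*F)_{gy}$ induced by the germ formula $g(s_y) = (g\ast s)_{gy}$; this is immediate from the construction of the action in the proof of that theorem. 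Everything else follows formally from the non-equivariant push forward together with the equivariant sheafification machinery already established, so there is no hard analytic content.

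\begin{proof}
Let $\mcB_Y$ be an open-closed basis for $Y$. Since $f$ is $G$-equivariant,
$f^{-1}(gV) = g f^{-1}(V)$ for every open $V \subseteq Y$ and $g \in G$.
Hence the assignment
\[
V \longmapsto F\big(f^{-1}(V)\big), \qquad (g \co V \to gV) \longmapsto \big(g \co F(f^{-1}(V)) \to F(f^{-1}(gV))\big)
\]
defines a functor $\mcO_G(\mcB_Y)^{op} \to \sets$. For $V \in \mcB_Y$ we have
$\stab_G(V) \leqslant \stab_G(f^{-1}(V))$, and by Corollary \ref{cor:sectionquot} (applied to $F$, whose domain sets $f^{-1}(V)$ are open) the set $F(f^{-1}(V))$ is a discrete $\stab_G(f^{-1}(V))$-set, hence a discrete $\stab_G(V)$-set.
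Thus $V \mapsto F(f^{-1}(V))$ is a $G$-equivariant presheaf of sets over $\mcB_Y$ in the sense of Definition \ref{def:gpresheaf}.

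Applying equivariant sheafification (Theorem \ref{thm:equivariantsheafspace}) yields a $G$-sheaf over $Y$.
Forgetting the $G$-action, the underlying presheaf and the sheafification construction coincide with those defining the non-equivariant push forward, so this $G$-sheaf has underlying sheaf $f_*(F)$, equipped with a canonical $G$-action.
For $y \in Y$ the stalk is $f_*(F)_y = \colim_{y \in V} F(f^{-1}(V))$ with the induced $\stab_G(y)$-action, which is a discrete $\stab_G(y)$-set as required.
Finally, a morphism of $G$-sheaves $F \to F'$ over $X$ induces a morphism of the associated $G$-presheaves over $\mcB_Y$, hence by Theorem \ref{thm:equisheafifyfunctor} a morphism $f_*(F) \to f_*(F')$ of $G$-sheaves over $Y$; functoriality follows. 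By Remark \ref{rmk:basischoice} the result is independent of the choice of basis.
\end{proof}
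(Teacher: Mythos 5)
Your proof is correct and follows essentially the same route as the paper: identify $f_*(F)$ on an open-closed set $V$ with $F(f^{-1}V)$, apply Corollary \ref{cor:sectionquot} together with the inclusion $\stab_G(V)\leqslant\stab_G(f^{-1}V)$ coming from equivariance of $f$, and invoke the equivariant presheaf/sheafification machinery. One small correction: the "obstacle" you anticipate about $f^{-1}(V)$ failing to be compact-open is not there (and your parenthetical "whose domain sets $f^{-1}(V)$ are open" is slightly too weak for the corollary, which needs compact-open) --- since $V$ is open-closed and $f$ is continuous, $f^{-1}(V)$ is open-closed in the profinite space $X$ and hence compact, which is exactly the observation the paper makes.
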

\begin{proof}
By definition, $f_*(F)$ is the sheafification of the functor
\[
\mcO(Y)^{op} \overset{f^{-1}}{\lra} \mcO(X)^{op} \overset{F}{\lra} \sets
\]
and hence is a sheaf of sets over $X$. 
We show that on compact-open sets, the group action is discrete. 
The results of Section \ref{sec:equivpresheaves} will then imply that 
$f_*(F)$ is a $G$-sheaf of sets over $X$.

Take $K \subseteq Y$ an open-closed set, which is therefore compact.
The set $f^{-1} K \subseteq X$ is also open-closed and hence compact. 
Thus, $F(f^{-1} K)$ is a discrete $\stab_G (f^{-1} K)$-set by 
Corollary \ref{cor:sectionquot}. 
As $f$ is $G$-equivariant,  $\stab_G (K) \subseteq \stab_G (f^{-1} K)$, 
so the $\stab_G (K)$-action on $f_*(F)(K) = F(f^{-1} K)$ is also discrete.
\end{proof}

\begin{proposition}
Let $f \co X \to Y$ be a $G$-equivariant map of profinite $G$-spaces, for $G$ a profinite group.
The functors $f^*$ and $f_*$ form an adjunction
\[
\adjunction{f^*}{\seteqsheaf{G}{Y}}{\seteqsheaf{G}{X}}{f_*.}
\] 
\end{proposition}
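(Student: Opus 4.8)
The plan is to upgrade the classical non-equivariant adjunction $f^* \dashv f_*$ between $\setsheaf{X}$ and $\setsheaf{Y}$, recalled in Appendix~\ref{Sec:sheavespresheaves}, to the equivariant setting. Write $U_X \co \seteqsheaf{G}{X} \to \setsheaf{X}$ and $U_Y \co \seteqsheaf{G}{Y} \to \setsheaf{Y}$ for the (faithful) forgetful functors. By the two preceding lemmas the equivariant $f^*$ and $f_*$ lie over the non-equivariant ones, in the sense that $U_X \circ f^* = f^* \circ U_Y$ and $U_Y \circ f_* = f_* \circ U_X$. Hence it suffices to show that the components of the classical unit $\eta \co \id \Rightarrow f_* f^*$ and counit $\epsilon \co f^* f_* \Rightarrow \id$ are maps of $G$-sheaves: by classical naturality they then assemble into natural transformations in the equivariant categories, and the triangle identities for them hold because they already hold after applying the faithful functors $U_X$ and $U_Y$.

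So the real content is to verify equivariance of $\eta_F$ and $\epsilon_E$, which I would do by detecting equivariance on a convenient family of test data, using that $G$-equivariance of $f$ gives $f^{-1}(gW) = g\, f^{-1}(W)$ for all $W \subseteq Y$ and $g \in G$. For the unit, fix an open-closed basis $\mcB$ of $Y$; for $V \in \mcB$ one has $(f_* f^* F)(V) = (f^* F)(f^{-1} V)$, and $\eta_{F,V} \co F(V) \to (f^* F)(f^{-1} V)$ is the canonical map sending a section $s$ (viewed into the sheaf space of $F$) to its pullback along $f$. A short computation with the operators $g \ast (-)$ gives $\eta_{F,gV}(g \ast s) = g \ast \eta_{F,V}(s)$, using $f^{-1}(gV) = g\, f^{-1}(V)$; since equivariance of a map of $G$-sheaves is exactly compatibility with these operators over $\mcB$ (Corollary~\ref{cor:sectionquot} together with the presheaf description of Section~\ref{sec:equivpresheaves}), $\eta_F$ is equivariant. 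For the counit $\epsilon_E \co f^* f_* E \to E$ I would instead work on stalks, using the identification $(f^* G')_x \cong G'_{f(x)}$ for a sheaf $G'$ over $Y$, under which the translation isomorphism $g \co (f^* G')_x \to (f^* G')_{gx}$ corresponds to $g \co G'_{f(x)} \to G'_{g f(x)}$; the stalk of $\epsilon_E$ at $x$ is then read off from the classical counit and the required square commutes by $G$-equivariance of $f$. Equivalently, one may check directly that the classical natural bijection $\hom_{\setsheaf{X}}(f^* E, F) \to \hom_{\setsheaf{Y}}(E, f_* F)$ carries equivariant maps to equivariant maps: since $f$ commutes with the translation homeomorphisms $L_g$ of $X$ and $Y$, the functors $f^*$ and $f_*$ commute with the corresponding "twist by $L_g$" endofunctors, and being a mate construction the bijection respects this extra symmetry, so $\phi$ is compatible with the descent/translation data of $E$ and $F$ if and only if its adjunct is.

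The obstacle here is bookkeeping rather than a genuine difficulty: one must be careful that $f_*$ does \emph{not} commute with arbitrary base change, only with the translation isomorphisms $L_g$ (which is all that is used), and one must line up two different models of the sheaves involved — sections over an open-closed basis of $Y$ on the $f_*$ side, stalks on the $f^*$ side — against the classical adjunction data. Once the compatibility of the canonical pullback maps with the operators $g \ast (-)$ is recorded, everything is formal. The same argument, applied to sheaves of $R$-modules via Theorem~\ref{thm:eqsheafequivRmod} and the $R$-linear forms of the preceding lemmas, yields the corresponding adjunction $f^* \dashv f_*$ between $\Reqsheaf{G}{Y}{R}$ and $\Reqsheaf{G}{X}{R}$.
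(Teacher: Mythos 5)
Your proposal is correct, and the paper's actual proof is precisely your second, ``equivalently'' argument: it encodes the $G$-action on a sheaf as maps $E \to g^*E = g_*E$ (using that each $g$ is a homeomorphism, so $g^* = g_*$), notes that equivariance of $f$ gives $f^*g^* = g^*f^*$ and $f_*g_* = g_*f_*$, and observes that the mate of the commutative square expressing equivariance of a map is the square expressing equivariance of its adjunct. Your primary route via equivariance of the unit and counit (checked on basis sections and on stalks, with the triangle identities inherited through the faithful forgetful functors) is a correct but more computational repackaging of the same idea, which the paper bypasses entirely.
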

\begin{proof}
Write the left action maps of $G$ as 
$g \co X \to X$ and $g \co Y \to Y$, using context to distinguish them.
These maps are homeomorphisms, hence $g^*=g_*$. 

Let $E$ be a $G$-sheaf of sets over $X$. Then the 
$G$-action on $E$ defines, and is defined by, maps $E \to g^*E=g_*E$. 
Similarly, 
for $F$ a $G$-sheaf of sets over $Y$, the 
$G$-action on $F$ defines, and is defined by, maps $F \to g^*F=g_*F$. 

Assume that we have a $G$-map of $G$-sheaves $F \lra f^*E$. 
Then we have a commutative square as below-left, with its adjoint
square below-right.
\[
\xymatrix{
F \ar[d] \ar[r] & f^*E \ar[d]
&&
f_* F \ar[d] \ar[r] & E \ar[d] \\
g^* F \ar[r] & g^* f^*E = f^* g^* E  &&
g_* f_*  F=f_* g_* F \ar[r] & g_* E  \\
}
\]
Since the right hand square above commutes for each $g \in G$, 
the map $f_* F \lra E$ is a $G$-map as well.  The converse is similar.
\end{proof}

\begin{remark}\label{rmk:equivariantextres}
As with Definition \ref{defn:extres}, when $f \colon X \to Y$ 
is the inclusion of a $G$-subspace, we call $f^* F$ the \emph{restriction}. 
Similarly, we call $f_*$ \emph{extension by zero}
when $f \colon X \to Y$ is the inclusion of closed $G$-subspace $X$.
In this case, the stalks of $f_* F$ are zero outside $X$
and $f^* f_* F$ is isomorphic to $F$. 
\end{remark}

\section{Equivariant skyscraper sheaves}\label{sec:adjunctionsexamples}

In this section we construct an equivariant analogue of a skyscraper 
sheaf -- a sheaf with only one non-zero stalk. 
When working with $G$-sheaves over a profinite $G$-space $X$, we see that 
if a stalk is non-zero at $x$, it must be non-zero at all points of the form $gx$, for $g \in G$.
This is an instance of how in equivariant settings, one should consider $G$-orbits 
in the place of points. 
We conclude that an equivariant skyscraper should be a sheaf which is non-zero on precisely one orbit. 
We work towards this definition, starting by studying sheaves on transitive $G$-spaces
(that is, spaces homeomorphic to a homogeneous space, $G/H$). 
As previously, the group $G$ will be profinite throughout this section. 

\begin{example}\label{transitivesheafex}
Let $H$ be a closed subgroup of $G$ and 
$M$ a  discrete $R[H]$-module. 
Then the map
\[
G \times_{H} M \lra G/H, \quad \quad [g,m] \mapsto gH
\]
defines a $G$-sheaf of $R$-modules over $G/H$. To see that it is a local homeomorphism, 
choose $[e,m]$  in $G \times_{H} M$. Then as $M$ is a discrete $H$-module, there is 
some open normal subgroup of $H$ which fixes $m$. Without loss of generality, we may take that subgroup to be 
of the form $N \cap H$ for $N$ an open normal subgroup of $G$. We want to find an open set containing
$[e,m]$ which is homeomorphic to some open subset of $G/H$.

Let $q \co G \times M \to G \times_H M$ be the quotient map. 
Consider a basic open subset of the domain, $K \times \{l\}$ ($K$ an open subgroup of $G$, $l \in M$).
Since
\[
q^{-1} q (K \times \{l\} ) = K \times (K \cap H)\{l\}
\]
is also an open set of $G \times M$, we see that $q$ is an open map. 
It follows that $q(N \times {m})$ is an open set of  $G \times_H M$.

We now claim that $q(N \times {m})$ is an open set upon which $p$ induces a homeomorphism to the open set 
$NH/H \subset G/H$.
That this restriction of $p$ is a bijection can be checked directly, indeed the domain is 
homeomorphic to $N/N \cap H$. As the sets are all compact and Hausdorff, 
this restriction of $p$ is a homeomorphism as claimed.
\end{example}


As with equivariant vector bundles, requiring that the base space be transitive
is a very strong restriction, as the following lemma makes clear.

\begin{lemma}\label{lem:gsheafghishmod}
For $H$ a closed subgroup of $G$, there is an equivalence of categories between
$G$-sheaves of $R$-modules over $G/H$ and 
discrete $R[H]$-modules. This equivalence is realised by the adjunction
\[
\adjunction{G \times_{H} (-)}{\Gmoddisc{R}{H}}{\Reqsheaf{G}{G/H}{R}}{(-)_{eH}}
\]
where $G \times_{H} -$ sends a discrete $R[H]$-module $M$ to the
sheaf $G \times_{H} M \lra G/H$ of Example \ref{transitivesheafex}.
\end{lemma}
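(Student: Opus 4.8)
The plan is to establish the two functors are well-defined, exhibit the unit and counit of the claimed adjunction, and then check that both composites are naturally isomorphic to the identity, which upgrades the adjunction to an equivalence.

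First I would check the functors land where claimed. For $G \times_H (-)$, Example \ref{transitivesheafex} already shows that $G \times_H M \to G/H$ is a $G$-sheaf of $R$-modules, and functoriality in $M$ is immediate since a map of discrete $R[H]$-modules $M \to M'$ induces $G \times_H M \to G \times_H M'$ commuting with the projections. For the stalk functor $(-)_{eH}$, given a $G$-sheaf $p \colon E \to G/H$, the stalk $E_{eH}$ is a discrete $R[\stab_G(eH)]$-module by Lemma \ref{lem:discretestalks} (in its $R$-module form), and $\stab_G(eH) = H$, so $(-)_{eH}$ indeed takes values in $\Gmoddisc{R}{H}$; it is functorial because a map of $G$-sheaves restricts to a map on the stalk over $eH$.

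Next I would produce the adjunction (co)unit. The counit $\epsilon_E \colon G \times_H E_{eH} \to E$ sends $[g, f_{eH}] \mapsto g \cdot f_{eH} \in E_{geH}$; this is well-defined because for $h \in H$ we have $[gh, h^{-1} f_{eH}] \mapsto gh \cdot h^{-1} f_{eH} = g f_{eH}$, using that the $H$-action on $E_{eH}$ is the restriction of the $G$-action, and it is a $G$-equivariant map of sheaf spaces over $G/H$ which is an $R$-module map on each stalk. The unit $\eta_M \colon M \to (G \times_H M)_{eH}$ identifies $M$ with the stalk $\{[e,m] \mid m \in M\}$ via $m \mapsto [e,m]_{eH}$; this is an isomorphism of discrete $R[H]$-modules directly from the construction in Example \ref{transitivesheafex}, where the fibre over $eH$ is exactly $\{e\} \times M$. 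Checking the triangle identities is then a routine unwinding of these formulas.

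The remaining work is to verify the counit $\epsilon_E$ is an isomorphism for every $G$-sheaf $E$ over $G/H$ — this is the main obstacle, since the unit being an isomorphism is essentially formal from the construction. Bijectivity on the stalk over $eH$ follows from $\eta$ being an isomorphism and the triangle identity; bijectivity on the stalk over $geH$ then follows because the $G$-action gives a bijection $E_{eH} \to E_{geH}$ (with inverse given by $g^{-1}$) compatible with $\epsilon$, and similarly $(G \times_H E_{eH})_{geH} \cong E_{eH}$ $G$-equivariantly. So $\epsilon_E$ is a stalkwise bijection of $R$-modules; since it is moreover a map of sheaf spaces (a continuous map over $G/H$ that is a local homeomorphism source and target), a stalkwise bijection is automatically a homeomorphism, hence an isomorphism of $G$-sheaves. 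One should also note the base space $G/H$ is profinite — it is a continuous image of the profinite group $G$ and Hausdorff, hence profinite — so the standing hypotheses of the earlier sections (in particular Corollary \ref{cor:sectionquot} and Lemma \ref{lem:discretestalks}) apply. Assembling these points gives that $(G \times_H (-), (-)_{eH})$ is an adjoint equivalence.
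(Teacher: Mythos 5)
Your proposal is correct and follows essentially the same route as the paper: identify the unit as the evident isomorphism $M \cong (G\times_H M)_{eH}$, observe the counit is an isomorphism on the stalk at $eH$, propagate to all stalks by transitivity of $G/H$, and conclude via the fact that a stalkwise isomorphism of sheaves is an isomorphism (the paper cites Tennison for this last step). The only cosmetic difference is that the paper establishes the adjunction by directly exhibiting the hom-set bijection (a map out of $G\times_H M$ is determined by its restriction to the stalk at $eH$), whereas you write the unit and counit explicitly and appeal to the triangle identities; both are fine.
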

\begin{proof}
Let $M$ be a discrete $R[H]$-module and $E$ a $G$-sheaf of $R$-modules 
over $G/H$. Consider a map of sheaves $f \co G \times_{H} M \lra E$. 
As a map of $G$-spaces, $f$ is uniquely determined by its restriction to 
$(G \times_H M)_{eH} \cong M$. Since $f$ is also a map over $G/H$, it follows that 
the image of this restriction must be in $E_{eH} \subseteq E$. Thus, $f$ is uniquely determined by the map
\[
f_{eH} \co M \lra E_{eH}
\]
and we have shown that the adjunction exists. The unit is the isomorphism
\[
M \to (G \times_H M)_{eH}.
\]
The counit is the map $G \times_H E_{eH} \to E$, which is an isomorphism on 
the stalk at $eH$. As the base space  $G/H$ is transitive, the counit is an isomorphism on all stalks
and \cite[Theorem 3.4.10]{tenn} implies that the counit is an isomorphism of sheaves. 
\end{proof}

We can relate this example and lemma to general sheaves by fixing a specific point $x$ in the base space
and considering the orbit $O=Gx$ of $x$. 

\begin{lemma}\label{restconst}
Let $O=Gx$ be the orbit of a point $x$ in a $G$-space $X$
and $E$ be a $G$-sheaf of $R$-modules over $X$. 
There is a natural isomorphism of $G$-sheaves over $O$
\begin{align*}
\psi \co G\underset{\stab_G(x)}{\times}E_x \cong E_{|_{O}}.
\end{align*}
\end{lemma}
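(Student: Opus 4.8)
The goal is to produce a natural isomorphism of $G$-sheaves over the orbit $O = Gx$ between $G \times_{\stab_G(x)} E_x$ and the restriction $E_{|_O}$. The plan is to recognise this as a direct application of Lemma \ref{lem:gsheafghishmod} together with the pull-back formalism of Section \ref{sec:pbpf}. First I would observe that, since $O = Gx$ is a transitive $G$-space, fixing the basepoint $x$ gives a $G$-equivariant homeomorphism $G/\stab_G(x) \cong O$, where $\stab_G(x)$ is a closed subgroup of $G$ because $X$ is Hausdorff and the action is continuous (see Appendix \ref{Sec:profinitestuff}). Under this identification, the inclusion $\iota \co O \hookrightarrow X$ is a map of profinite $G$-spaces, so by Remark \ref{rmk:equivariantextres} the restriction $E_{|_O} = \iota^* E$ is a $G$-sheaf of $R$-modules over $O$, and the stalk formula $\iota^*(E)_y \cong E_{\iota(y)}$ gives that its stalk at the point corresponding to $e\stab_G(x)$ is precisely $E_x$, as a discrete $R[\stab_G(x)]$-module (using Lemma \ref{lem:discretestalks}).

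Next I would invoke Lemma \ref{lem:gsheafghishmod} with $H = \stab_G(x)$: it provides an equivalence of categories between $G$-sheaves of $R$-modules over $G/\stab_G(x) \cong O$ and discrete $R[\stab_G(x)]$-modules, realised by the adjunction with left adjoint $G \times_{\stab_G(x)} (-)$ and right adjoint $(-)_{e\stab_G(x)}$. Applying the counit of this adjunction to the $G$-sheaf $E_{|_O}$ yields a canonical map
\[
\psi \co G \underset{\stab_G(x)}{\times} (E_{|_O})_{e\stab_G(x)} \lra E_{|_O},
\]
and we identified $(E_{|_O})_{e\stab_G(x)} \cong E_x$ above, so this is the desired map $\psi \co G \times_{\stab_G(x)} E_x \to E_{|_O}$. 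That the counit is an isomorphism is exactly the content of the proof of Lemma \ref{lem:gsheafghishmod}: it is an isomorphism on the stalk at $e\stab_G(x)$ essentially by construction, and since the base $O$ is transitive, $G$-equivariance propagates this to an isomorphism on every stalk, whence it is an isomorphism of sheaves by Tennison \cite[Theorem 3.4.10]{tenn}.

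Finally I would address naturality: given a map $f \co E \to E'$ of $G$-sheaves over $X$, restricting along $\iota$ gives $f_{|_O} \co E_{|_O} \to E'_{|_O}$, and on stalks at $x$ this is $f_x \co E_x \to E'_x$; the square relating $\psi$ for $E$ and $\psi$ for $E'$ commutes because $\psi$ is the counit of an adjunction, hence natural in its argument. The only mild subtlety — and the point I expect to need the most care — is making the identification $G/\stab_G(x) \cong O$ and the resulting identification of stalks genuinely $G$-equivariant and compatible with the $R[\stab_G(x)]$-module structures, so that Lemma \ref{lem:gsheafghishmod} applies verbatim; once that bookkeeping is in place, the statement is formal. (One could alternatively give a hands-on construction of $\psi$ by sending the class $[g, s_x]$ of a germ to $g \ast s_x = (g \ast s)_{gx} \in E_{gx}$ and checking well-definedness, continuity, and bijectivity directly, but routing through Lemma \ref{lem:gsheafghishmod} avoids repeating that work.)
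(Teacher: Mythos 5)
Your proposal is correct and matches the argument the paper intends: the lemma is stated immediately after Lemma \ref{lem:gsheafghishmod} precisely so that it follows by identifying the closed orbit $O$ with $G/\stab_G(x)$ (a continuous bijection from a compact space to a Hausdorff space, hence a homeomorphism), restricting $E$ to $O$, and applying the counit of that adjunction, which is an isomorphism on all stalks by transitivity of the base. Your naturality remark via functoriality of the counit is also the right way to dispose of that point.
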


We combine this result with the extension by zero and restriction functors 
of Remark \ref{rmk:equivariantextres} to give a functor 
from discrete $R[H]$-modules to $G$-sheaves over a profinite $G$-space $X$. 
We can use extension and restriction (rather than the more general push forward or pull back
functors) as orbits are closed.

\begin{example}\label{Gskyscraper}
Let $O=Gx$ be an orbit of a profinite $G$-space $X$, with $i \co O \lra X$ the inclusion. 
For a discrete 
$R\left[\stab_G(x)\right]$-module $M$ 
there is a sheaf over $X$ 
\[
i_* G\underset{\stab_G(x)}{\times} M
\]
where $i_*$ is the extension by zero functor.
We call this the \emph{equivariant skyscraper} of $M$ over $O$. 
The stalk of this sheaf at $x$ is $M$, the stalks at other points in $O$
are isomorphic to $M$ as $R$-modules and the stalks at $z \notin O$ are zero. 
\end{example}

Thus we have a definition of an equivariant generalisation of a skyscraper 
sheaf. If we think of this construction as a functor, 
we find it has a right adjoint given by taking a stalk of the sheaf. 

\begin{proposition}\label{equiadjunct}
Let $O=Gx$ be an orbit of a profinite $G$-space $X$, with $i \co O \lra X$ the inclusion. 
There is an adjunction 
\[
\adjunction
{(-)_{x}}
{\Reqsheaf{G}{X}{R}}
{\Gmoddisc{R}{\stab_G(x)}}
{i_* G\underset{\stab_G(x)}{\times} (-) .}
\]
\end{proposition}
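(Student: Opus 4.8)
The plan is to chain together the adjunctions that have already been established. By Lemma~\ref{lem:gsheafghishmod}, for the closed subgroup $\stab_G(x) \leqslant G$ we have an adjunction
\[
\adjunction{G \underset{\stab_G(x)}{\times} (-)}{\Gmoddisc{R}{\stab_G(x)}}{\Reqsheaf{G}{Gx}{R}}{(-)_{eH}}
\]
where the right adjoint is the stalk at the basepoint of the transitive space $O = Gx$. Writing $i \co O \to X$ for the inclusion of the orbit (which is a closed $G$-subspace, since orbits of a profinite $G$-space are closed), Remark~\ref{rmk:equivariantextres} gives that $i_*$ is the extension-by-zero functor and records the further adjunction
\[
\adjunction{i^*}{\Reqsheaf{G}{X}{R}}{\Reqsheaf{G}{O}{R}}{i_*.}
\]
Composing these two adjunctions yields an adjoint pair whose left adjoint is $i_* \bigl(G \underset{\stab_G(x)}{\times} (-)\bigr)$ and whose right adjoint is $(i^*(-))_{eH}$, i.e.\ restrict to $O$ and then take the stalk at $x$. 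The final step is to identify $(i^*(-))_{eH}$ with the stalk functor $(-)_x$ directly on $X$; this is immediate because pullback along an inclusion preserves stalks, so $(i^*E)_x \cong E_{i(x)} = E_x$ naturally in $E$ (this isomorphism of stalks was recorded just after the pull back lemma in Section~\ref{sec:pbpf}).

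The key steps, in order, are: (1) invoke Lemma~\ref{lem:gsheafghishmod} for the subgroup $\stab_G(x)$ to get the induction/stalk adjunction over $O$; (2) observe $O$ is a closed $G$-subspace of $X$, so the pull back/push forward adjunction of Section~\ref{sec:pbpf} specialises to the restriction/extension-by-zero adjunction $(i^*, i_*)$ of Remark~\ref{rmk:equivariantextres}; (3) compose the two adjunctions, using that a composite of left adjoints is left adjoint to the composite of the right adjoints; (4) simplify the resulting right adjoint using $(i^*E)_x \cong E_x$. One may also spell out the unit and counit: the unit is $M \xrightarrow{\ \cong\ } \bigl(i^*\, i_*(G \underset{\stab_G(x)}{\times} M)\bigr)_{eH} \cong (G \underset{\stab_G(x)}{\times} M)_{eH} \cong M$ built from the unit of $(i^*, i_*)$ (an isomorphism here since $i^* i_* \cong \id$ by Remark~\ref{rmk:equivariantextres}) and the unit of Lemma~\ref{lem:gsheafghishmod}, while the counit at a $G$-sheaf $E$ over $X$ is the composite $i_*\bigl(G \underset{\stab_G(x)}{\times} E_x\bigr) \xrightarrow{\ i_*(\psi)\ } i_*\bigl(i^* E\bigr) \to E$ using the isomorphism $\psi$ of Lemma~\ref{restconst} and the counit of $(i^*, i_*)$.

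I do not expect any genuine obstacle: the entire content is bookkeeping with adjunctions that are all in hand, plus the two standard naturality facts (pullback preserves stalks; $i^* i_* \cong \id$ for a closed inclusion). The one point that needs a sentence of care is matching the variances and basepoints: one must check that the stalk functor $(-)_x \co \Reqsheaf{G}{X}{R} \to \Gmoddisc{R}{\stab_G(x)}$ really does land in discrete $R[\stab_G(x)]$-modules, which is exactly the content of Lemma~\ref{lem:discretestalks} (in its $R$-module form), and that the $\stab_G(x)$-action on $E_x$ agrees under $E_x \cong (i^*E)_{eH}$ with the action used in Lemma~\ref{lem:gsheafghishmod}. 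This is routine from the definition of the pullback sheaf, so the proof is short.
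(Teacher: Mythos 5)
Your overall strategy is the same as the paper's: compose the equivalence of Lemma \ref{lem:gsheafghishmod} with the restriction/extension-by-zero adjunction for the closed inclusion $i \co O \to X$, and use that restriction does not change stalks. However, there is a genuine error in the execution: you have the handedness of the composite adjunction backwards. In the adjunction of Section \ref{sec:pbpf}, $i^*$ is the \emph{left} adjoint and $i_*$ the \emph{right} adjoint, and for a closed (but not open) inclusion these roles cannot be interchanged. The correct composition uses that Lemma \ref{lem:gsheafghishmod} is an equivalence, so that $G\underset{\stab_G(x)}{\times}(-)$ is \emph{also} right adjoint to $(-)_{eH}$; then $i_*\circ\bigl(G\underset{\stab_G(x)}{\times}(-)\bigr)$ is a composite of right adjoints and hence is the right adjoint of the pair, with left adjoint $(-)_{eH}\circ i^* \cong (-)_x$. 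That is what the proposition asserts, and the handedness matters downstream: Proposition \ref{equivinjshf} needs $i_*G\underset{\stab_G(x)}{\times}(-)$ to be the \emph{right} adjoint in order to conclude that it preserves injectives.

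The adjunction you actually claim, with $i_*\bigl(G\underset{\stab_G(x)}{\times}(-)\bigr)$ as left adjoint and $(-)_x$ as right adjoint, is false in general: the right adjoint of $i_*$ for a closed inclusion is the ``sections supported on $O$'' functor, not $i^*$. For instance, take $G$ trivial, $X=\{0\}\cup\{1/n \mid n \in \bN\}$, $O=\{0\}$ and $E$ the constant sheaf at $R$; then every map $i_*R \to E$ vanishes on the stalk at $0$ (a section of $i_*R$ near $0$ restricts to $0$ away from $0$, so its image, being locally constant, is $0$), so $\Hom(i_*R,E)=0$ while $\Hom(R,E_0)\cong R \neq 0$. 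Relatedly, the natural map you use for your ``counit'', $i_*i^*E \to E$, does not come from the $(i^*,i_*)$ adjunction: the counit of that adjunction is $i^*i_*F \to F$ for $F$ over $O$, and the natural map involving $i_*i^*E$ is the unit $E \to i_*i^*E$ --- which is precisely the unit of the adjunction as stated in the proposition. With the handedness corrected, the remaining ingredients of your argument (closedness of orbits, $(i^*E)_x\cong E_x$, discreteness of stalks via Lemma \ref{lem:discretestalks}) are fine and match the paper's one-line proof.
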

\begin{proof}
Since the restriction functor does not alter stalks, the right adjoint is the composite
of the equivalence of Lemma \ref{lem:gsheafghishmod} and the restriction functor of 
Remark \ref{rmk:equivariantextres}.
\end{proof}

\section{Godemont resolutions of  equivariant sheaves}

A particularly useful property of skyscraper sheaves over a space $X$ is that they contain a 
set of enough injectives for the category of (non-equivariant) sheaves over $X$. 
Hence, they can be used to give a canonical injective resolution of a sheaf, often known as the 
Godemont resolution, see Bredon \cite[pp. 36, 37]{bredonsheaf}.
We replicate this for equivariant sheaves. 
Our aim is to show that equivariant skyscraper sheaves
give enough injectives for $G$-sheaves of $R$-modules over $X$.
We first see that the equivariant skyscraper functor preserves injective objects. 
We remind the reader that $R$ is a unital ring, $G$ is a profinite group and $X$ is a profinite $G$-space. 

\begin{proposition}\label{equivinjshf}
Let $x$ in $X$. 
If $A$ is an injective discrete $R[\stab_G(x)]$-module,
then the $G$-sheaf 
\[
i_* G\underset{\stab_G(x)}{\times} A
\] 
is injective in the category of $G$-sheaves of $R$-modules over $X$. 
\end{proposition}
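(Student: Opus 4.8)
The plan is to exhibit the equivariant skyscraper $i_* G\times_{\stab_G(x)} A$ as the image of the injective module $A$ under a right adjoint functor whose left adjoint is exact, and then invoke the standard fact that such a right adjoint preserves injectives. Concretely, by Proposition \ref{equiadjunct} we have the adjunction
\[
\adjunction{(-)_x}{\Reqsheaf{G}{X}{R}}{\Gmoddisc{R}{\stab_G(x)}}{i_* G\underset{\stab_G(x)}{\times}(-)}
\]
so it suffices to check that the left adjoint, the stalk functor $(-)_x$, is exact. Granting this, for any injective discrete $R[\stab_G(x)]$-module $A$ the functor
\[
\Hom_{\Reqsheaf{G}{X}{R}}\big(-,\; i_* G\underset{\stab_G(x)}{\times} A\big)
\cong
\Hom_{\Gmoddisc{R}{\stab_G(x)}}\big((-)_x,\; A\big)
\]
is a composite of the exact functor $(-)_x$ with the exact functor $\Hom(-,A)$, hence is exact, which is exactly the statement that $i_* G\times_{\stab_G(x)} A$ is injective.

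First I would recall that $\Reqsheaf{G}{X}{R}$ is abelian (Theorem \ref{Gsheafabelian}) and that, by Constructions \ref{sheafcolim} and \ref{limconstruct}, both kernels and cokernels of maps of $G$-sheaves of $R$-modules are computed stalkwise — cokernels because colimits of sheaves are sheafifications of presheaf colimits and sheafification preserves stalks, kernels because they are finite limits and the sheaf-space of a finite limit is the fibrewise limit. It follows immediately that a sequence of $G$-sheaves of $R$-modules is exact if and only if it is exact on every stalk, and in particular that $(-)_x$ is exact. (Alternatively one can note that $(-)_x$ is the composite of the restriction functor to the orbit $O=Gx$, which is a pullback and hence exact, with the equivalence of Lemma \ref{lem:gsheafghishmod}.) This is the only point requiring any real verification.

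The main obstacle, such as it is, is simply confirming that the stalk functor really is exact as a functor into \emph{discrete} $R[\stab_G(x)]$-modules rather than merely into $R$-modules: one must check that the discreteness of the stabiliser action is preserved under kernels and cokernels, which is automatic since a submodule or quotient of a discrete module is discrete. With that in hand the argument is formal. I would also remark that the same proof shows the equivariant skyscraper functor sends injectives to injectives for sheaves of sets wherever that makes sense, though here we only need the $R$-module statement. No delicate point-set topology is needed beyond what has already been established in Sections \ref{sec:equivpresheaves}–\ref{sec:colimitssheaves}.
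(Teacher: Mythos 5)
Your proposal is correct and follows essentially the same route as the paper: the paper's proof is exactly the observation that the left adjoint $(-)_x$ of Proposition \ref{equiadjunct} preserves monomorphisms, so the right adjoint preserves injectives. You simply spell out in more detail why the stalk functor is exact (stalkwise computation of kernels and cokernels), which the paper leaves implicit.
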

\begin{proof}
As the left adjoint of Proposition \ref{equiadjunct} preserves monomorphisms, 
the right adjoint preserves injective objects.
\end{proof}

For any $G$-sheaf of $R$-modules we may
construct a monomorphism into an injective sheaf. 
Non-equivariantly this is usually done by taking a product 
of skyscraper sheaves indexed over the points of the base space. 
Equivariantly, we use equivariant skyscraper sheaves and index over
\emph{orbits} of the base space.

\begin{theorem}\label{thm:enoughinj}
The injective equivariant skyscraper sheaves provide enough injectives for 
the category of $G$-sheaves of $R$-modules over $X$.
\end{theorem}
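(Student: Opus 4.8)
The plan is to mimic the classical Godement construction, replacing points with orbits and skyscrapers with equivariant skyscrapers. Let $F$ be a $G$-sheaf of $R$-modules over $X$. For each orbit $O = Gx$ of $X$, the stalk $F_x$ is a discrete $R[\stab_G(x)]$-module by Lemma \ref{lem:discretestalks}, so we may choose a monomorphism $F_x \hookrightarrow A_O$ into an injective discrete $R[\stab_G(x)]$-module; such an $A_O$ exists because the category of discrete $R[\stab_G(x)]$-modules has enough injectives (it is a Grothendieck category, being modules over the ring $R$ internal to the topos of discrete $\stab_G(x)$-sets, or directly: discrete modules over a profinite group form a Grothendieck abelian category). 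By the adjunction of Proposition \ref{equiadjunct}, the chosen monomorphism $F_x \hookrightarrow A_O$ corresponds to a map of $G$-sheaves
\[
\eta_O \co F \lra i_{O*} G\underset{\stab_G(x)}{\times} A_O,
\]
where $i_O \co O \to X$ is the inclusion. Proposition \ref{equivinjshf} tells us each target is an injective $G$-sheaf of $R$-modules.

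Next I would assemble these into a single map into a product. Set $J = \prod_{[O]}^S i_{O*} G\times_{\stab_G(x)} A_O$, where the product is over orbits of $X$ and is the sheaf-theoretic product of Proposition \ref{prop:infproduct}. A product of injective objects in an abelian category is injective, so $J$ is an injective $G$-sheaf of $R$-modules. The universal property of the product gives a map $\eta \co F \to J$ whose composite with the projection to the $O$-factor is $\eta_O$. It remains to check that $\eta$ is a monomorphism, which in the category of $G$-sheaves of $R$-modules may be checked stalkwise (kernels are computed stalkwise, as noted after Theorem \ref{Gsheafabelian}). Fix $y \in X$ and let $O = Gy$ be its orbit, say $y = gx$ for the chosen representative $x$. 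On stalks at $y$, the projection of $\eta$ onto the $O$-factor realizes, via $g$-translation, the composite $F_y \xrightarrow{g^{-1}} F_x \hookrightarrow A_O$, which is injective; hence $\eta_y$ is injective. Thus $\eta$ is a monomorphism from $F$ into an injective, completing the proof.

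The one genuine point requiring care — and the step I expect to be the main obstacle — is the set-theoretic one: the product $J$ must be indexed over a \emph{set} of orbits, and one must know that the category of discrete $R[\stab_G(x)]$-modules genuinely has enough injectives so that the $A_O$ can be chosen. The set-of-orbits issue is fine because $X$ is a (profinite, hence small) space; the enough-injectives claim for discrete modules over a profinite group is standard but should be cited (for instance it follows from the fact that this category is a Grothendieck category, or one can use the usual trick of viewing a discrete $\stab_G(x)$-module as a colimit of its $R[\stab_G(x)/N]$-submodules over open normal $N$ and dualizing). A secondary point worth spelling out is the stalkwise computation of the projection map in the last paragraph: one needs the explicit identification of stalks of $i_{O*} G\times_{\stab_G(x)} A_O$ from Example \ref{Gskyscraper} together with the compatibility of the unit of the adjunction in Lemma \ref{lem:gsheafghishmod} with $G$-translation, but this is routine given the results already established.
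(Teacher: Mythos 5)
Your proposal is correct and follows essentially the same route as the paper's proof: embed each stalk $F_{x_O}$ into an injective discrete $R[\stab_G(x_O)]$-module (this is exactly Lemma \ref{lem:rgmodinjectives} in the appendix, so no external citation is needed), pass to a map into the injective equivariant skyscraper via the adjunction of Proposition \ref{equiadjunct} (the paper unpacks this same step through Lemma \ref{restconst} and the extension--restriction adjunction), form the sheaf-theoretic product over orbits, and verify the monomorphism stalkwise by projecting to the factor indexed by the orbit of the given point. The set-theoretic and translation-compatibility points you flag are handled implicitly in the paper in just the way you describe.
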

\begin{proof}
Let $E$ be a $G$-sheaf of $R$-modules over $X$.
For each orbit $O$ of $X$, pick an element $x_O \in O$. 
By Lemma \ref{restconst} there is an isomorphism of sheaves over $O$ 
\begin{align*}
G\underset{\stab_G(x_O)}{\times}E_{x_O} \cong E_{|_{O}}.
\end{align*}
For each $x_O \in X$, we use Lemma \ref{lem:rgmodinjectives} to obtain
a monomorphism into an injective  discrete $R[\stab_G(x)]$-module
\[
E_{x_O} \lra I_{x_O}.
\]
By Tennison \cite[Theorem 3.3.5]{tenn}, this induces a monomorphism of $G$-sheaves over $O$
\[
E_{|_{O}} \cong G\underset{\stab_G(x_O)}{\times} E_{x_O}
\lra
G\underset{\stab_G(x_O)}{\times} I_{x_O}
\] 
into an injective $G$-sheaf over $O$.
By the extension--restriction adjunction, this gives a 
map of $G$-sheaves over $X$
\[
E \lra i^* G\underset{\stab_G(x_O)}{\times} I_{x_O} = I_O.
\]
This map is induces a monomorphism on stalks at every $y \in O$.

We claim the product of these maps
\[
E \lra {\prod_{O}}^S I_O
\]
is a monomorphism into an injective sheaf. 
To see it is a monomorphism we compose with the 
projection to $I_O$ and check stalkwise for points $y \in O$.
By Proposition \ref{equivinjshf}, the codomain is injective,
so we have finished our claim and the proof. 
\end{proof}

\begin{definition}\label{godeequi}\index{equivariant Godement resolution}
If $E$ is a $G$-sheaf of $R$-modules over a profinite $G$-space we define the 
\emph{equivariant Godement resolution} as follows.

Let $\delta_0 \co E \lra I_0(E)$ be the monomorphism into an injective sheaf constructed in the proof of 
Theorem \ref{thm:enoughinj} applied to the sheaf $E$.
\begin{align*}
\xymatrix{0\ar[r]&E\ar[r]^{\delta_0}&I^0(E)\ar[d]^p\ar[r]&I^0(\coker\delta_0)=I^1(E)\ar[r]&\ldots\\&&\coker\delta_0\ar[ur]_{\delta_1}}
\end{align*}
Where $\delta_1$ and $I^1(E)$ come from Theorem \ref{thm:enoughinj} applied to the sheaf $\coker\delta_0$.
\end{definition}

As $\delta_1$ is a monomorphism, we see that the sequence is exact at $I^0(E)$.
Inductively, we see that this is an injective resolution of $E$.
This resolution is particularly sensitive to the stalks at isolated points
and was used in \cite[Chapter 7]{sugruethesis} to give calculations of the injective dimension of 
the category of rational $G$-sheaves over a profinite $G$-space $X$. 
The authors will revisit this point in \cite{BSspectra}.




\section{Weyl sheaves over spaces of subgroups}

The second author used the theory of equivariant sheaves to 
construct an equivalence of categories between the category of rational $G$-equivariant Mackey functors with
a full subcategory of the category of $G$-sheaves of $\bQ$-modules. 
This equivalence requires $G$ to be profinite and 
can be found in \cite[Chapter 5]{sugruethesis} and \cite{BSmackey}. 
In this section we examine this subcategory, known as the category of Weyl-$G$-sheaves. 
We start by introducing the space of closed subgroups of a profinite group $G$, 
which is the base space used in the equivalence. This section overlaps slightly with \cite[Section 2.3]{BSmackey}, see Appendix \ref{Sec:profinitestuff} for a brief recap of 
profinite spaces. 

\begin{definition}
For $G$ a profinite group, let $\sub G$ denote the set of closed subgroups of $G$.
For $N$ an open normal subgroup of $G$, we define surjective maps 
\[
p_N \co \sub G \lra \sub G/N \quad \quad K \longmapsto KN/N.
\]
\end{definition}. 

We use the maps $p_N$ to put a topology on $\sub G$. This gives open sets
\[
p_N^{-1} \{ KN/N \} = O(N,NK) = \left\lbrace L \in \sub G\mid NL= NK \right\rbrace
\]
for $K$ a closed subgroup of $G$. These sets form a basis $\mcB_{\sub G}$ for the topology.
Indeed, if $N$ and $M$ are open normal subgroups and $K$ and $L$ closed subgroups, 
then 
\[
O(N \cap M, (N \cap M) C ) \subseteq O(N,NK) \cap O(M, ML)
\]
for any $C \in O(N,NK) \cap O(M, ML)$. Moreover, 
if $N \leqslant M$, then $O(N,NK)\subseteq O(M,MK)$.

\begin{examples}
The following examples can be found as 
\cite[Proposition 2.5 and Example 3.2]{GartSmith10}
and \cite[Page 3]{GSclassify}.
The space $\sub \adic{p}$ can be described as the set 
\[
P=\{0\} \cup \{ 1/n \mid n \in \bN \} \subset \bR.
\]
If $p_1, \dots, p_n$ are distinct primes then 
\[
S \left( \prod_{i=1}^n \adic{p_i} \right) = P^n.
\]
The space of subgroups of $(\adic{p})^{\times n}$ is the 
Pe\l{}czy\'nski space.
\end{examples}

\begin{lemma}\label{lem:intersections}
Let $G$ be a profinite group and $K$ a closed subgroup. Then
\[
K=\bigcap_{N \opennormalsub G} NK  \quad \textrm{ and } \quad 
\{ K \} = \bigcap_{N \opennormalsub G} O(N, NK).
\]
\end{lemma}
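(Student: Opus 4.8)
The plan is to prove both identities by exploiting the fact that $G$ is profinite, so $K$ is the intersection of the open normal subgroups containing $K$, and more generally $K = \bigcap_{N \opennormalsub G} NK$ is a standard profinite group fact; nonetheless I would give the argument cleanly. First I would establish the left-hand equality $K = \bigcap_{N \opennormalsub G} NK$. The inclusion $K \subseteq NK$ holds for every $N$, so $K \subseteq \bigcap_N NK$. For the reverse inclusion, suppose $g \notin K$. Since $K$ is closed and $G$ is profinite (hence Hausdorff with a neighbourhood basis of open normal subgroups at the identity), there is an open normal subgroup $N$ with $gN \cap K = \emptyset$, equivalently $g \notin NK$. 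Hence $g \notin \bigcap_N NK$, proving $\bigcap_N NK \subseteq K$.

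Next I would deduce the right-hand equality $\{K\} = \bigcap_{N \opennormalsub G} O(N,NK)$ from the first. Recall $O(N,NK) = \{ L \in \sub G \mid NL = NK \}$, so in particular $K \in O(N,NK)$ for every $N$, giving $\{K\} \subseteq \bigcap_N O(N,NK)$. Conversely, let $L \in \bigcap_N O(N,NK)$, so $NL = NK$ for every open normal $N$. Then by the first identity applied to both $L$ and $K$,
\[
L = \bigcap_{N \opennormalsub G} NL = \bigcap_{N \opennormalsub G} NK = K,
\]
so $L = K$ and the intersection is exactly $\{K\}$.

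The only point requiring care — and the main (minor) obstacle — is the separation step: justifying that for $g \notin K$ with $K$ closed there is an open normal $N$ with $g \notin NK$. This is where profiniteness of $G$ is essential; it follows because $G \setminus K$ is open, the open normal subgroups form a neighbourhood basis of $e$, and $NK$ is the union of cosets $nK$ which shrinks down to $K$ as $N$ shrinks. One can phrase this as: $\bigcap_N NK$ is closed (an intersection of closed sets $NK$, each closed since $N$ is open hence closed and $K$ compact), it is $K$-invariant on the right, and it contains no coset properly outside $K$; or simply cite the standard fact (see the appendix on profinite groups). Everything else is a formal manipulation of the definitions of $O(N,NK)$.
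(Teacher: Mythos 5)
Your proof is correct. The second identity is handled exactly as in the paper: from $NL=NK$ for all $N$ one deduces $L=\bigcap_N NL=\bigcap_N NK=K$ using the first identity for both closed subgroups. For the first identity, however, you take a genuinely different route. The paper argues directly with an element $g$ of the intersection: writing $g=nk$ gives non-empty closed sets $N\cap gK$ satisfying the finite intersection property, so by compactness of $G$ their total intersection is non-empty yet contained in $\bigcap_N N=\{e\}$, forcing $e\in gK$ and hence $g\in K$. You instead argue by contraposition via a separation step: if $g\notin K$, then since $K$ is closed and the cosets $gN$ form a neighbourhood basis at $g$, some $gN$ misses $K$, i.e.\ $g\notin NK$. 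Both arguments use that $K$ is closed and that the open normal subgroups form a neighbourhood basis of the identity; yours is arguably the more elementary point-set argument, while the paper's reduces the claim to the standard fact that the open normal subgroups intersect in the trivial subgroup. Your closing paragraph hedging about ``the main obstacle'' is the weakest part --- the justification you sketch there (intersection of closed sets, ``shrinks down to $K$'') is vaguer than the clean neighbourhood-basis argument you already gave in the first paragraph, which is all that is needed.
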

\begin{proof}
The intersection contains $K$, so assume $g \in G$ is in the intersection.
For any open normal subgroup $N$, we may write $g=nk$ for $n \in N$ and $k \in K$.
Hence $n = g k^{-1} \in N \cap gK$.
The sets $N \cap gK$ are non-empty and satisfy the finite intersection property for varying $N$,
as $N$ is an open subgroup. By compactness of $G$, we have that
\[
\emptyset \neq \bigcap_{N \opennormalsub G} N \cap gK \subseteq \bigcap_{N \opennormalsub G} N = \{ e\}.
\]
Hence, for any $N$, $e \in N \cap gK \subseteq gK$ for any $N$.
We have shown that $g \in K$.

For the second equality, consider some closed subgroup $L$ such that $NL=NK$ for all $N$. 
Then 
\[
K=\bigcap_{N \opennormalsub G} NK = \bigcap_{N \opennormalsub G} NL=L. \qedhere
\]
\end{proof}

The space $\sub G$ has a continuous $G$-action given by 
\[
G \times \sub G \lra \sub G \quad \quad (g,K) \longmapsto gKg^{-1}.
\]
The stabiliser of a closed subgroup $K$ of $G$ in $\sub G$ is therefore the normaliser
of $K$ in $G$, $N_G K$. We may also calculate the stabiliser of the basic open sets:
\begin{align*}
\stab_G\left(O(N,NK)\right)=N_G(NK).
\end{align*}


\begin{example}\label{ex:groupring}
We define a $G$-equivariant presheaf $F$ of $R$-modules over $\mcB_{\sub G}$ by
\[
F(O(N,NK))  = R[G/NK]
\]
with structure maps for inclusion and translation as follows.
Since 
\[
O(M,ML) \subset O(N,NK)   \Longleftrightarrow ML \leqslant NK
\]
the inclusion map $i \colon O(M,ML) \subset O(N,NK)$ defines a projection $p \colon G/ML \to G/NK$.  
We define the restriction map $F(i)$ to be the transfer map
\[
F(i) \colon R[G/NK] \lra R[G/ML], \quad \quad xNK \mapsto \sum_{p(yML)=xNK} yML
\]
which sends $gNK$ to the sum of elements in its preimage under $p$.
Given a translation map $g \colon O(N,NK) \to O(N,NgKg^{-1})$ we define
\[
F(g) \colon R[G/NgKg^{-1}] \lra R[G/NK], \quad \quad xNgKg^{-1} \mapsto g^{-1}xgNK.
\]

The restriction maps are equivariant in the sense that 
the maps $F(g)$ and $F(i)$ fit into a commutative square. 
Moreover, if $g \in NK$, the map $F(g)$ is given by 
\[
R[G/NK] \lra  R[G/NK], \quad \quad xNK \mapsto g^{-1} xNK
\]
and hence is continuous, as the left-multiplication action 
of $N_G(NK)$ on the discrete space $G/NK$ is continuous.
Hence, $F$ defines a $G$-equivariant presheaf of $R$-modules as in 
Definition \ref{def:Gpresheafrmod}.

We can use equivariant sheafification, Theorem \ref{thm:equisheafifyfunctor}, 
to construct a $G$-equivariant sheaf of $R$-modules $E$ from $F$. 
By Lemma \ref{lem:intersections}, the stalk of $E$ at a closed subgroup $K$ of $G$
is 
\[
E_K=F_K =\colim_{N \opennormalsub G} R[G/NK].
\]
When $R=T^*$, the dual of a profinite ring $T$, this colimit is the dual 
of the completed group algebra $\llbracket T G \rrbracket$ of \cite[Section 5.3]{rz00}.
\end{example}

\begin{example}\label{ex:groupring2}
As a related example, we can define a $G$-presheaf and $G$-sheaf by taking
\[
F'(O(N,NK))  = R[(G/NK)^L]
\]
and using the same structure maps. The stalk at $K$ is 
\[
F'_K=\colim_{N \opennormalsub G} \bQ[(G/NK)^L]
\]
which we claim is zero unless $L$ is conjugate to a subgroup of $NK$. 
To prove the claim, assume that $L$ is not conjugate to a subgroup of K.
Then we have an inverse system of finite sets with limit the empty set 
\[
\emptyset = (G/K)^L = \ilim_{N \opennormalsub G} (G/NK)^L.
\]
By \cite[Proposition 1.1.4]{rz00} we must have that some $(G/NK)^L$
is empty, hence so is $(G/M K)^L$ whenever $M \leqslant N$. 
The claim follows from a cofinality argument.
\end{example}


\begin{definition}\label{defn:weylsheaf}
A \emph{Weyl-$G$-sheaf} of $R$-modules $E$ is a $G$-sheaf of $R$-modules
over $\sub G$ such that $E_K$ is $K$-fixed and hence 
is a discrete $R[W_G(K)]$-module. 
A map of Weyl-$G$-sheaves is a map of $G$-sheaves of $R$ modules over $\sub G$.
We write this category as $\Rweylsheaf{G}{R}$, with the inclusion into 
$G$-sheaves of $R$-modules over $\sub G$ denoted by $\inc$. 

Let $F$ be a $G$-sheaf of $R$-modules over $\sub G$. 
A \emph{Weyl section} of $F$
is a section $s \co U \to F$ such that for each $K \in U$, $s_K$ is $K$-fixed. 
\end{definition}

\begin{example}
The constant sheaf at an $R$-module $M$ is a Weyl-$G$-sheaf. It should be 
remembered that here we are taking an $R$-module, not an $R[G]$-module. 

Let $K$ be a closed subgroup of $G$, if $A$ is a $K$-fixed discrete $R[N_G K]$-module, 
then the equivariant skyscraper sheaf
\[
i_* G\underset{N_G K}{\times} A 
\]
is a Weyl-$G$-sheaf of $R$-modules that we may call a \emph{Weyl skyscraper sheaf}. 
\end{example}

The definitions directly imply that every section of a Weyl-$G$-sheaf is a Weyl section. 
Weyl sections occur quite often in general $G$-equivariant 
sheaves. We restate the local sub-equivariance of 
sections of sheaves of $\sub G$ and show that $K$-fixed germs 
over $K \in \sub G$ are represented by Weyl sections.
This result and proof also appears as \cite[Proposition 2.21]{BSmackey}.

\begin{proposition}\label{prop:localsubeqweyl}
If $E$ is a $G$-sheaf of $R$-modules over $\sub G$ and $s_K \in E_K$ is fixed by $K$, 
then there there is an open normal subgroup $N$ of $G$ such that $s_K$ is represented by an $NK$-equivariant section:
\begin{align*}
s \colon O(N,NK) \lra E.
\end{align*}
\end{proposition}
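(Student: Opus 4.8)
The plan is to combine the local sub-equivariance already established in Proposition~\ref{prop:compactdiscrete} with the fact that each germ over $K$ is fixed by $K$, and then shrink the domain of a representing section until the section itself is invariant under an open subgroup $NK$. First I would use Proposition~\ref{prop:compactdiscrete} applied to any section representing $s_K$: there is an open normal subgroup $N_1$ of $G$ and an $N_1$-invariant compact-open domain $V \subseteq \sub G$ with $K \in V$ on which we have an $N_1$-equivariant section $t \colon V \to E$ representing $s_K$. Without loss of generality, by shrinking $V$ using the basis $\mcB_{\sub G}$, I may assume $V = O(N_1, N_1 K)$ (replacing $N_1$ by a smaller open normal subgroup if needed, noting $O(N,NK)$ is $N$-invariant since $nK n^{-1} \subseteq NK$ for $n \in N$, and in fact $\stab_G(O(N,NK)) = N_G(NK) \supseteq NK$).

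The remaining issue is that $N_1 K$-equivariance is stronger than $N_1$-equivariance: we know $t$ is fixed by $N_1$ but not yet by all of $K$. This is where the hypothesis that $s_K$ is $K$-fixed enters. For each $k \in K$, the section $k \ast t$ also represents a germ over $K$ (since $k \in \stab_G(O(N_1,N_1K))$), and by the key formula $g(s_x) = (g\ast s)_{gx}$ we get $(k \ast t)_K = k(t_K) = k(s_K) = s_K = t_K$. Since two sections of a sheaf agreeing at a point agree on a neighbourhood, $k \ast t$ and $t$ agree on some compact-open neighbourhood of $K$. The key point is to make this uniform in $k$: by Corollary~\ref{cor:sectionquot}, $E(O(N_1, N_1K))$ is a discrete $\stab_G(O(N_1,N_1K))$-set, so the stabiliser of $t$ in $\stab_G(O(N_1,N_1K))$ is an open subgroup. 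Hence there is an open normal subgroup $N_2 \leqslant N_1$ of $G$ such that $t$ restricted to $O(N_2, N_2K)$ (which is contained in $O(N_1,N_1K)$) is fixed by every element of $N_2 K \cap \stab_G(O(N_2,N_2K))$; since $N_2 K \subseteq N_G(N_2 K) = \stab_G(O(N_2,N_2K))$, this says exactly that $s \coloneqq t|_{O(N_2,N_2K)}$ is $N_2K$-equivariant. Set $N = N_2$.

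The main obstacle is the passage from pointwise agreement (each $k \ast t = t$ near $K$) to agreement on a single common domain for all $k \in K$ simultaneously — a priori $K$ is infinite, so one cannot just intersect neighbourhoods. The clean way around this is to not work germ-by-germ in $k$ at all, but rather to invoke the discreteness of $E(O(N_1,N_1K))$ as a $\stab_G(O(N_1,N_1K))$-set (Corollary~\ref{cor:sectionquot}): the orbit-stabiliser of the single element $t$ under this discrete action is open, so it contains an open subgroup, and one then checks that restricting along a basic open subset cut out by a small enough open normal $N$ forces $N K$ to sit inside that stabiliser. So the real content is bookkeeping with the basis $\mcB_{\sub G}$ and the identities $\stab_G(O(N,NK)) = N_G(NK)$, $NK \subseteq N_G(NK)$, together with Lemma~\ref{lem:intersections} to know the $O(N,NK)$ shrink down to $\{K\}$; the rest is the formula $g(s_x) = (g\ast s)_{gx}$ and discreteness. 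I expect the write-up to be short once the reduction to $V = O(N,NK)$ is made at the outset.
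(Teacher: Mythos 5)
Your skeleton is the right one (and matches the intended argument: the paper itself defers the proof to \cite[Proposition 2.21]{BSmackey}, citing exactly Proposition \ref{prop:compactdiscrete} and Corollary \ref{cor:sectionquot} as the inputs): reduce to an $N_1$-equivariant representative $t$ over a basic open set $O(N_1,N_1K)$, use $K$-fixedness of $s_K$ and the formula $g(s_x)=(g\ast s)_{gx}$ to see that $k\ast t$ and $t$ have the same germ at $K$ for every $k\in K$, and then find one neighbourhood of $K$ on which all these agreements hold simultaneously. The gap is at precisely the step you flag as the crux. You claim that because $E(O(N_1,N_1K))$ is a discrete $\stab_G(O(N_1,N_1K))$-set, the stabiliser $S$ of $t$ is open, and that this ``forces $NK$ to sit inside that stabiliser'' after restricting to $O(N,NK)$ for small enough $N$. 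Openness of $S$ only gives you a neighbourhood of the identity inside $S$ --- it is exactly what you already had from $N_1$-equivariance --- and it says nothing about whether the closed (typically non-open) subgroup $K$ lands in the stabiliser of $t$ or of any restriction of $t$. If that implication held, the hypothesis that $s_K$ is $K$-fixed would never be used at that point, which cannot be right. So the sentence ``one then checks that\dots'' is asserting the proposition rather than proving it.

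The missing ingredient is compactness of $K$, used to make the orbit finite rather than to enlarge the stabiliser. Since $K\subseteq N_1K\subseteq N_G(N_1K)=\stab_G(O(N_1,N_1K))$ acts continuously on the discrete set $E(O(N_1,N_1K))$ by Corollary \ref{cor:sectionquot}, the stabiliser $S$ of $t$ is open, so finitely many cosets $k_1S,\dots,k_mS$ cover the compact group $K$; hence the orbit $\{k\ast t\mid k\in K\}=\{k_1\ast t,\dots,k_m\ast t\}$ is finite. Each $k_i\ast t$ has germ $s_K$ at $K$ by your computation, hence agrees with $t$ on some $O(M_i,M_iK)$ (sections with equal germs agree near the point). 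Now take $N\leqslant N_1\cap M_1\cap\dots\cap M_m$ open normal: on $O(N,NK)$, which $K$ and $N$ both stabilise, the restriction of $t$ satisfies $k\ast(t|_{O(N,NK)})=(k\ast t)|_{O(N,NK)}=t|_{O(N,NK)}$ for all $k\in K$, and it is still $N$-equivariant, so it is $NK$-equivariant. With that replacement your proof is complete; the rest of your write-up (the reduction to $V=O(N_1,N_1K)$, the germ computation, the identification $\stab_G(O(N,NK))=N_G(NK)\supseteq NK$) is correct.
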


\begin{corollary}\label{cor:fixgermweylsection}
Let $E$ be a $G$-sheaf of $R$-modules over $\sub G$.
A $K$-fixed germ in $E_K$ can be represented by a Weyl section. 
\end{corollary}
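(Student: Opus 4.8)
The plan is to deduce Corollary \ref{cor:fixgermweylsection} directly from Proposition \ref{prop:localsubeqweyl}. Suppose $f \in E_K$ is a germ fixed by $K$. First I would apply Proposition \ref{prop:localsubeqweyl} to obtain an open normal subgroup $N$ of $G$ and an $NK$-equivariant section
\[
s \co O(N,NK) \lra E
\]
representing $f$, so in particular $s_K = f$.

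The remaining task is to check that $s$ is a Weyl section, i.e.\ that $s_L$ is $L$-fixed for \emph{every} $L \in O(N,NK)$, not just for $L = K$. The key point is that $L \in O(N,NK)$ means $NL = NK$, and in particular $L \subseteq NL = NK$. So for $l \in L$ we have $l \in NK$, hence $l$ lies in the subgroup $NK$ under which $s$ is equivariant. Using the formula $g(s_x) = (g \ast s)_x$ relating the $G$-action on stalks and sections (stated just after Corollary \ref{cor:sectionsaresections}), together with $g\ast s = s$ for $g \in NK$ and the fact that $l$ fixes the point $L \in \sub G$ (as $l \in L = $ its own conjugate, or more simply $lLl^{-1}=L$), we get
\[
l(s_L) = (l \ast s)_{lL} = s_{lLl^{-1}} = s_L
\]
for all $l \in L$. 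Thus $s_L$ is $L$-fixed, and since $L \in O(N,NK)$ was arbitrary, $s$ is a Weyl section on the Weyl-open neighbourhood $O(N,NK)$ of $K$.

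I do not anticipate a serious obstacle here: the statement is essentially a repackaging of Proposition \ref{prop:localsubeqweyl} once one observes that $NK$-equivariance of $s$ forces $L$-fixedness of each stalk over the basic open set $O(N,NK)$, because every $L$ in that set is contained in $NK$. The only mild subtlety worth spelling out is that one needs the basic open set $O(N,NK)$ — rather than an arbitrary open neighbourhood — so that the containment $L \subseteq NK$ is automatic; this is exactly the neighbourhood produced by Proposition \ref{prop:localsubeqweyl}, so everything fits together. One should also note $s_K = f$ so the representing section does recover the given germ.
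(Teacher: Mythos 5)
Your proof is correct and follows essentially the same route as the paper's: apply Proposition \ref{prop:localsubeqweyl} to get an $NK$-equivariant representative on $O(N,NK)$, note that every $L$ in that basic open set satisfies $L \leqslant NL = NK$ so $s$ is $L$-equivariant, and conclude $s_L$ is $L$-fixed since $lLl^{-1}=L$. Your explicit stalk computation $l(s_L) = (l\ast s)_{lL} = s_{lLl^{-1}} = s_L$ just spells out the step the paper states in one line.
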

\begin{proof}
By Proposition \ref{prop:localsubeqweyl}, we have an $NK$-equivariant  representative
\[
s \colon O(N,NK) \lra E
\]
for a $K$-fixed germ $s_K \in E_K$. Let $L \in O(N,NK)$, then $L \leqslant NL = NK$, 
so $s$ is $L$-equivariant. As $L \in \sub G$ is fixed by the action of $L$, 
$s_L$ is $L$-fixed. 
\end{proof}

Using these results we can construct a Weyl-$G$-sheaf from a $G$-equivariant 
sheaf of $R$-modules over $\sub G$. 

\begin{construction}
Let $E$ be a $G$-sheaf of $R$-modules over $\sub G$.
We define a Weyl-$G$-sheaf $\Weyl(E)$. 
The stalk over $K$ is given by the $K$-fixed points of $E_K$
\[
\Weyl(E)_K = E_K^K.
\]
We then equip $\Weyl(E)$ with the subspace topology. 
Equally, we define the topology using only the Weyl sections of 
$E$. As every germ of $\Weyl(E)$ can be represented by a Weyl section, 
we have the local homeomorphism condition.  
\end{construction}

As we are working with sheaves of $R$-modules, we note that the stalks of 
$\Weyl(E)$ will be non-empty, as the zero element is always fixed by the group actions. 

Since the image of a $K$-fixed element of a $G$-space must be $K$-fixed, 
we have the adjunction of the following lemma. 
The last statement of the lemma is equivalent to the statement that
$\inc$ is fully faithful.

\begin{lemma}\label{lem:weyladjunction}
For $G$ a profinite group there is an adjunction
\[
\adjunction{\inc}{\Rweylsheaf{G}{R}}{\Reqsheaf{G}{\sub G}{R}}{\Weyl.}
\]

Moreover, the counit of this adjunction
\[
\inc \Weyl(E) \lra E
\]
is a monomorphism of $G$-sheaves of $R$-modules.
and the unit map 
\[
F \lra \Weyl(\inc F) 
\]
is an isomorphism. 
\end{lemma}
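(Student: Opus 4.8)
The plan is to establish the adjunction by exhibiting a natural bijection on Hom-sets, then read off the unit and counit and verify their stated properties. The two functors are $\inc$ (the inclusion of the full subcategory of Weyl-$G$-sheaves) and $\Weyl$ (stalkwise $K$-fixed points, topologized via Weyl sections). First I would show $\Weyl$ really lands in Weyl-$G$-sheaves: the stalk $\Weyl(E)_K = E_K^K$ is by construction $K$-fixed, and it is a discrete $R[W_G(K)]$-module because $E_K$ is a discrete $R[N_G K]$-module (Lemma \ref{lem:discretestalks} and the surrounding discussion) and taking $K$-fixed points of a discrete $R[N_G K]$-module yields a discrete $R[N_G K / K] = R[W_G(K)]$-module. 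That $\Weyl(E)$ is genuinely a $G$-sheaf — i.e. the projection is a local homeomorphism with continuous $G$-action — is exactly the content of the preceding Construction, which uses Corollary \ref{cor:fixgermweylsection}: every germ of $\Weyl(E)$ is represented by a Weyl section, giving the local homeomorphism property, and the $G$-action is inherited as a subspace of $E$.

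Next I would construct the adjunction bijection. Given a Weyl-$G$-sheaf $F$ and a $G$-sheaf $E$ over $\sub G$, I claim
\[
\Hom_{\Reqsheaf{G}{\sub G}{R}}(\inc F, E) \cong \Hom_{\Rweylsheaf{G}{R}}(F, \Weyl E).
\]
A map $\varphi \co \inc F \to E$ induces on stalks $\varphi_K \co F_K \to E_K$; since $F_K$ is $K$-fixed and $\varphi_K$ is a map of $R[\stab_G(K)]$-modules, its image lies in $E_K^K = \Weyl(E)_K$. So $\varphi$ factors (on each stalk, hence — by the local homeomorphism structure and Tennison \cite[Theorem 3.3.5]{tenn} or the analogous stalkwise criterion used elsewhere in the paper) as a map of sheaf spaces $F \to \Weyl E$. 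Conversely, any $\psi \co F \to \Weyl E$ composes with the inclusion of sheaf spaces $\Weyl E \hookrightarrow E$ to give a map $\inc F \to E$. These two assignments are mutually inverse and natural in $F$ and $E$, which is a routine check on stalks. The counit $\inc \Weyl(E) \to E$ is then the inclusion of sheaf spaces, which is injective on the total space (it is a subspace inclusion) and hence a monomorphism of $G$-sheaves. The unit $F \to \Weyl(\inc F)$ is the identity on each stalk, because $F_K$ is already $K$-fixed so $F_K = F_K^K = \Weyl(\inc F)_K$; being a stalkwise isomorphism of sheaves over the same base, it is an isomorphism of sheaves (again by the stalkwise criterion, cf. \cite[Theorem 3.4.10]{tenn}), and $G$-equivariance is immediate.

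**The main obstacle** I anticipate is not the algebra but the topology: verifying that the set-theoretic factorization $\inc F \to \Weyl E$ is actually continuous, i.e. a morphism of sheaf spaces, and dually that the counit and unit are morphisms in the right category rather than mere stalkwise maps. The clean way around this is to note that a map of sheaf spaces over $\sub G$ is determined by, and can be checked via, its effect on stalks together with the requirement that it sends sections to sections; since $\Weyl E$ is topologized precisely so that its sections are the Weyl sections of $E$, and every section of $\inc F$ is a Weyl section (immediate from the definition of a Weyl-$G$-sheaf), the factorization sends sections to Weyl sections and is therefore continuous. Once continuity is in hand, $G$-equivariance of all the maps involved follows because everything is built from $G$-equivariant data and fixed-point subspaces are $G$-stable up to the normalizer action, which is exactly what $\Weyl$ records.

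Finally, I would note that the abelian structure and existence of limits stated in the companion theorem follow formally once the adjunction and the idempotency of $\inc \circ \Weyl$ (equivalently, $\inc$ fully faithful, equivalently the unit being an isomorphism) are established, using that $\Weyl$ is a right adjoint hence preserves limits and that $\inc$ detects them.
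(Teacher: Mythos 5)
Your proposal is correct and follows essentially the same route as the paper, which justifies the adjunction by the single observation that a $G$-equivariant map sends $K$-fixed elements to $K$-fixed elements, so any morphism $\inc F \to E$ factors stalkwise through $\Weyl(E)$, the unit is a stalkwise identity, and the counit is the subspace inclusion. Your worry about continuity of the factorization is resolved even more directly than you suggest: since $\Weyl(E)$ carries the subspace topology inherited from $E$, the corestriction of a continuous map whose image lies in $\Weyl(E)$ is automatically continuous.
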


\begin{remark}\label{rmk:weylmackeysheaf}
As mentioned, earlier work of the authors \cite{BSmackey} gives an 
equivalence between rational $G$-equivariant Mackey functors and 
Weyl-$G$-sheaves of $\bQ$-modules
\[
\adjunction{\mackeyfunctor}{\weylsheaf{G}}{\mackey{G}}{\sheaffunctor.}
\]
The functor $\mackeyfunctor$ can in fact take a 
$G$-sheaf of $\bQ$-modules over $\sub G$ as input. 
By \cite[Proposition 5.5]{BSmackey} the construction 
\[
\sheaffunctor \circ \mackeyfunctor (E)
\]
for $E$ a $G$-sheaf (not necessarily Weyl) gives $\Weyl(E)$. 

The construction of a rational Mackey functor $\mackeyfunctor(E)$ from a $G$-sheaf $E$ defines
\[
\mackeyfunctor(E)(H) = E(\sub H)^H.
\]
If $K \in \sub H$, then any $H$-fixed (or equally, $H$-equivariant) section $s \in E(\sub H)$ 
is $K$-equivariant, so $s_K$ is $K$-fixed. 
Hence, the construction of a Mackey functor from a $G$-sheaf 
only makes use of the Weyl sections. 
As a consequence, 
\[
\mackeyfunctor (\Weyl E) = \mackeyfunctor(E)
\]
giving a diagram of functors as below.
\[
\xymatrix{
\Rweylsheaf{G}{\bQ} 
\ar@<0.7ex>[r]^-{\inc}
\ar@<0.7ex>[d]^-{\mackeyfunctor} 
&  
\Reqsheaf{G}{\sub G}{\bQ} \ar@<0.7ex>[l]^-{\Weyl} 
\ar@<+2pt>@/^10pt/[dl]^-{\mackeyfunctor}
\\
\mackey{G}
\ar@<0.7ex>[u]^-{\sheaffunctor}
}
\]
\end{remark}

\begin{theorem}\label{thm:weylabelian}
The category of Weyl-$G$-sheaves of $R$-modules is an abelian category with all
small colimits and limits. 
Limits are constructed by applying $\Weyl$ to the underlying diagram of 
$G$-sheaves of $\bQ$-modules over $\sub G$. 
\end{theorem}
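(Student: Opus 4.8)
The plan is to leverage the adjunction from Lemma \ref{lem:weyladjunction} together with the already-established abelian structure on $\Reqsheaf{G}{\sub G}{R}$ (Theorem \ref{Gsheafabelian}), and to recognize $\Rweylsheaf{G}{R}$ as a coreflective subcategory of it. Since $\inc$ is fully faithful and has a right adjoint $\Weyl$, the category of Weyl-$G$-sheaves is a coreflective subcategory; general category theory then gives colimits immediately (apply $\inc$, take the colimit in $\Reqsheaf{G}{\sub G}{R}$, and note that the result already lies in the subcategory since the stalk of a colimit is the colimit of the stalks, and a filtered-or-arbitrary colimit of $K$-fixed modules is $K$-fixed), while limits are computed by applying $\Weyl$ to the limit taken in the ambient category (the standard formula for limits in a coreflective subcategory). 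So the structural claims about colimits and limits are essentially formal once the adjunction is in hand.

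The substantive work is checking that $\Rweylsheaf{G}{R}$ is \emph{abelian}, not merely additive with (co)limits. First I would observe it is a full additive subcategory of the abelian category $\Reqsheaf{G}{\sub G}{R}$, closed under finite products (a finite product of $K$-fixed stalks is $K$-fixed), hence additive with a zero object and biproducts. For kernels: given a map $f \co E \to E'$ of Weyl-$G$-sheaves, form its kernel $\ker f$ in $\Reqsheaf{G}{\sub G}{R}$; since kernels are computed stalkwise and a submodule of a $K$-fixed module is $K$-fixed, $\ker f$ is again a Weyl-$G$-sheaf, so $\Rweylsheaf{G}{R}$ is closed under kernels and these are the categorical kernels in the subcategory. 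Cokernels are the delicate point: the cokernel of $f$ formed in $\Reqsheaf{G}{\sub G}{R}$ need not have $K$-fixed stalks (a quotient of a $K$-fixed module by a $K$-fixed submodule is $K$-fixed when $K$ acts trivially on both, which it does here — so actually stalkwise this is fine), but one must be careful that the sheafification used in Construction \ref{sheafcolim} does not destroy the Weyl property; applying $\Weyl$ to the ambient cokernel gives the cokernel in $\Rweylsheaf{G}{R}$. Then I would verify the abelian axiom that every monomorphism is a kernel and every epimorphism is a cokernel: this follows because these notions agree with the stalkwise ones and $\Reqsheaf{G}{\sub G}{R}$ is abelian, combined with the fact that $\inc$ preserves monomorphisms (it has a left adjoint — wait, it has a right adjoint $\Weyl$, so $\inc$ preserves colimits; but $\inc$ also visibly preserves monomorphisms since they are detected on stalks).

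A cleaner route, which I would actually prefer to write, is to invoke a general theorem: a coreflective (equivalently, by duality to the reflective case) full subcategory of an abelian category that is closed under subobjects and finite products, with the coreflector exact enough, is abelian. More concretely, since $\inc$ is fully faithful with right adjoint $\Weyl$, and $\Weyl$ is easily checked to be additive and left exact (it preserves kernels because taking $K$-fixed points is left exact and kernels are stalkwise), the category $\Rweylsheaf{G}{R}$ is equivalent to the category of coalgebras for the idempotent comonad $\inc\Weyl$ on $\Reqsheaf{G}{\sub G}{R}$; the coalgebras for an idempotent comonad form a coreflective subcategory, and when the base category is abelian and the subcategory is closed under subobjects (which we checked via stalks) it is itself abelian with the inherited exact structure. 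The main obstacle — and the step deserving the most care — is confirming that cokernels in $\Rweylsheaf{G}{R}$ are computed as $\Weyl$ of the ambient cokernel and that this is compatible with the (co)limit formulas, i.e.\ that $\Weyl$ interacts correctly with the sheafification appearing in colimit constructions; everything else is a routine transport of structure along the fully faithful coreflective inclusion. I would close by spelling out the limit formula explicitly as the theorem statement requests: a small limit in $\Rweylsheaf{G}{R}$ is obtained by forming the limit of the underlying diagram in $\Reqsheaf{G}{\sub G}{R}$ and then applying $\Weyl$, which is the standard description of limits in a coreflective subcategory via its right adjoint.
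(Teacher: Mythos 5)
Your proposal is correct and follows essentially the same route as the paper: closure under colimits and finite limits is checked stalkwise (stalks of (co)limits are (co)limits of stalks, and these remain $K$-fixed since $K$ acts trivially), infinite products are obtained as $\Weyl$ applied to the ambient sheaf product justified by the $(\inc,\Weyl)$ adjunction, and the additive/abelian structure is transported from $\Reqsheaf{G}{\sub G}{R}$. The coreflective-subcategory and idempotent-comonad language is just a tidier packaging of the same argument, and your worry about sheafification destroying the Weyl property is already resolved by the stalk computation you cite.
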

\begin{proof}
Take a colimit diagram of Weyl-$G$-sheaves of $R$-modules. 
Applying $\inc$ and taking the colimit gives a $G$-sheaf
whose stalk at $K \in \sub G$ is the colimit of the stalks of the diagram. 
It follows that the stalk at $K$ of the colimit is $K$-fixed. 
Hence, the colimit is a Weyl-$G$-sheaf. 
Finite limits may be constructed similarly. 

The addition operation for maps of $G$-sheaves
from Definition \ref{def:sheafaddition} applies equally well to Weyl-$G$-sheaves. 
The global zero section of a $G$-sheaf is $G$-equivariant, hence  
a Weyl-$G$-sheaf has a global zero section. This gives a zero map in the set of
maps from one Weyl-$G$-sheaf to another Weyl-$G$-sheaf. 
Arguments similar to those in the proof of Theorem \ref{Gsheafabelian}
show that we have an abelian category. 

It remains to construct (small) infinite products. Let $E_i$ be a set of  
Weyl-$G$-sheaves of $R$-modules and $F$ another Weyl-$G$-sheaf. 
Using a superscript $S$ for the sheaf product we have isomorphisms
\begin{align*}
\Rweylsheaf{G}{R}(F, \Weyl {\prod}^S_i \inc E_i)
\cong &
\Reqsheaf{G}{\sub G}{R}(\inc F, {\prod}^S_i \inc E_i) \\
\cong &
\prod_i \Reqsheaf{G}{\sub G}{R}(\inc F, \inc E_i) \\
\cong &
\prod_i \Rweylsheaf{G}{R}(F, E_i)
\end{align*}
Hence, the product in the category of Weyl-$G$-sheaves is given by $\Weyl \prod^S_i \inc E_i$.
\end{proof}

When working rationally, Remark \ref{rmk:weylmackeysheaf}
and \cite[Subsection 5.3.3]{sugruethesis} 
give another construction of limits via the category 
of rational $G$-Mackey functors. That description has the 
advantage that the product of Mackey functors is constructed objectwise.

We can relate the $(\inc,\Weyl)$-adjunction to equivariant skyscraper sheaves by 
the following commutative diagram of adjunctions (which means the diagram of left adjoints 
commutes up to natural isomorphism and the diagram of right adjoints 
commutes up to natural isomorphism).

\begin{lemma}
For any closed subgroup $K$ of $G$, there is a commutative diagram of adjunctions 
\[
\xymatrix@C+1cm@R+0.5cm{
\Rweylsheaf{G}{R}
\ar@<-0.7ex>[d]_-{\inc} 
\ar@<+0.7ex>[r]^-{(-)_K} 
&
\Gmoddisc{R}{W_G K} 
\ar@<-0.7ex>[d]_-{\inc} 
\ar@<+0.7ex>[l]^-{i_* G\underset{N_G K}{\times} (-)} 
\\
\Reqsheaf{G}{\sub G}{R}
\ar@<-0.7ex>[u]_-{\Weyl} 
\ar@<+0.7ex>[r]^-{(-)_K} 
&
\Gmoddisc{R}{N_G K}. 
\ar@<-0.7ex>[u]_-{(-)^K} 
\ar@<+0.7ex>[l]^-{i_* G\underset{N_G K}{\times} (-)} 
}
\]
\end{lemma}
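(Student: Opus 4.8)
The plan is to verify the four commutativity statements implicit in the diagram: two for the left adjoints (the composites going right-then-down and down-then-right agree up to natural isomorphism) and two for the right adjoints. In fact, by uniqueness of adjoints it suffices to check one square in each direction, since the other follows formally. First I would observe that the two horizontal adjunctions are already established --- the bottom row is Proposition \ref{equiadjunct} specialised to the orbit $O = G\!\cdot\! K$ in $\sub G$ (so that $\stab_G(K) = N_G K$), and the top row is the analogous adjunction between Weyl-$G$-sheaves and discrete $R[W_G K]$-modules, which one obtains by restricting Proposition \ref{equiadjunct} along the fully faithful inclusions and using that $\Weyl$ sends the equivariant skyscraper at a $K$-fixed module to the Weyl skyscraper. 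The two vertical adjunctions are $(\inc, \Weyl)$ from Lemma \ref{lem:weyladjunction} on the left, and $(\inc, (-)^K)$ --- the inclusion of discrete $R[W_G K]$-modules into discrete $R[N_G K]$-modules with right adjoint ``take $K$-fixed points'' --- on the right.

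Next I would check commutativity of the right-adjoint diagram, which is the more transparent direction. Starting from a $G$-sheaf $E$ over $\sub G$, going down-then-right gives $(\Weyl E)_K = E_K^K$ by construction of $\Weyl$, viewed as a discrete $R[W_G K]$-module; going right-then-down gives $(E_K)^K$, the $K$-fixed points of the discrete $R[N_G K]$-module $E_K$, again a discrete $R[W_G K]$-module. These are literally the same object, and naturality in $E$ is immediate since both are induced by the stalk functor followed by taking $K$-fixed points. For the left-adjoint diagram, starting from a discrete $R[W_G K]$-module $A$, going right-then-down along the bottom composite gives $\inc$ applied to the Weyl skyscraper $i_* (G \times_{N_G K} A)$, while going down-then-right gives the equivariant skyscraper $i_* (G \times_{N_G K} \inc A)$ where now $\inc A$ means $A$ regarded as a discrete $R[N_G K]$-module via restriction along $W_G K \twoheadleftarrow N_G K$; but the induced $G$-sheaf $G \times_{N_G K} A$ does not depend on whether we remember the $W_G K$-structure or only the $N_G K$-structure of $A$, so these agree naturally. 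The required compatibility of units and counits then follows formally from these isomorphisms together with the adjunction identities, which is what ``commutative diagram of adjunctions'' encodes.

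The only point requiring a little care --- and the main (minor) obstacle --- is the well-definedness of the top horizontal adjunction itself, namely that $(-)_K \co \Rweylsheaf{G}{R} \to \Gmoddisc{R}{W_G K}$ does land in discrete $W_G K$-modules (which is exactly the defining property of a Weyl-$G$-sheaf) and that its left adjoint is the Weyl skyscraper rather than the full equivariant skyscraper. For the latter I would argue that for a $K$-fixed discrete $R[N_G K]$-module $A$ the sheaf $i_*(G \times_{N_G K} A)$ is already a Weyl-$G$-sheaf: its stalk at a conjugate $gKg^{-1}$ is isomorphic to $A$ with the action twisted by $g$, which is $gKg^{-1}$-fixed precisely because $A$ is $K$-fixed, and its stalks off the orbit are zero, hence fixed. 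Thus the equivariant skyscraper functor restricts to a functor landing in Weyl-$G$-sheaves, and the adjunction of Proposition \ref{equiadjunct} restricts accordingly using that $\inc$ is fully faithful (Lemma \ref{lem:weyladjunction}). Once this is in place, all four commutativities are either identities on the nose or the observation that $\inc$ on module categories only forgets structure, so no genuine computation remains.
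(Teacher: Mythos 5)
The paper states this lemma without proof, so there is nothing to compare against; taken on its own terms your argument contains all the needed ingredients and the key non-formal point is handled correctly. That point is your third paragraph: the equivariant skyscraper $i_*\,G\times_{N_G K}A$ of a $K$-fixed module $A$ is already a Weyl-$G$-sheaf (stalk at $gKg^{-1}$ is $A$ twisted by $g$, hence $gKg^{-1}$-fixed; stalks off the orbit vanish), so the top horizontal adjunction exists as a restriction of Proposition \ref{equiadjunct} along the fully faithful inclusions. That, plus the observation that $(-)_K$ lands in discrete $R[W_GK]$-modules by the very definition of a Weyl sheaf, is the entire content of the lemma.

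One bookkeeping slip you should repair: the two squares you actually verify in your second paragraph are not the ones the paper's definition of ``commutative diagram of adjunctions'' asks for. In the diagram the left adjoints are $(-)_K$ (horizontal) and $\inc$ (vertical), and the right adjoints are the skyscraper functors (horizontal) and $\Weyl$, $(-)^K$ (vertical). Your first check, $(\Weyl E)_K \cong (E_K)^K$, composes the vertical \emph{right} adjoints with the horizontal \emph{left} adjoints, and your second check, $\inc\bigl(i_*\,G\times_{N_G K}A\bigr) \cong i_*\,G\times_{N_G K}(\inc A)$, composes the vertical \emph{left} adjoints with the horizontal \emph{right} adjoints; both are mixed (Beck--Chevalley-type) squares, and neither formally implies the other two. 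Fortunately the genuine left-adjoint square is even easier than what you computed: for a Weyl sheaf $E$, both composites $\Rweylsheaf{G}{R}\to\Gmoddisc{R}{N_G K}$ send $E$ to the stalk $E_K$ with its $N_GK$-action, which factors through $W_GK$ precisely because $E$ is Weyl, so the square commutes on the nose. The right-adjoint square, $\Weyl\bigl(i_*\,G\times_{N_G K}M\bigr)\cong i_*\,G\times_{N_G K}M^K$, then follows by uniqueness of right adjoints, exactly as your opening remark anticipates. So the fix is only to relabel and to add the one-line stalk computation; no new mathematics is required.
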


Since the above mentioned stalk functors preserves monomorphisms, the horizontal right adjoints preserve
injective objects. This fact and the evident analogues of Theorem \ref{thm:enoughinj}
and Definition \ref{godeequi}  
(using discrete $\bQ[W_G K]$-modules in place of discrete $\bQ[N_G K]$-modules)
give the following corollary.

\begin{corollary}
Let $K$ be a closed subgroup of a profinite group $G$. 
If $A$ is an injective discrete $\bQ[W_G K]$-module, then the Weyl skyscraper sheaf 
\[
i_* G\underset{N_G K}{\times} A
\]
is an injective object of Weyl-$G$-sheaves of $R$-modules. 
Hence, the category of Weyl-$G$-sheaves of $R$-modules has enough injectives
and the Godement resolution can be used for Weyl-$G$-sheaves.
\end{corollary}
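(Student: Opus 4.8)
The plan is to expand the one-line argument sketched just before the statement, following the template of Theorem~\ref{thm:enoughinj}. The first step is to show that a Weyl skyscraper $i_* G\underset{N_G K}{\times} A$ with $A$ injective is itself an injective Weyl-$G$-sheaf. For this I would first record that a morphism $f$ of Weyl-$G$-sheaves is a monomorphism if and only if each stalk map $f_L$ is injective: by Theorem~\ref{thm:weylabelian} the kernel of $f$ is obtained by applying $\Weyl$ to the kernel of $\inc f$ in $\Reqsheaf{G}{\sub G}{R}$, kernels of $G$-sheaves over $\sub G$ are computed stalkwise, and $\Weyl$ changes nothing on the (already $L$-fixed) kernel stalks, so $(\ker f)_L \cong \ker f_L$; and a Weyl-$G$-sheaf vanishes iff all its stalks do. Consequently the stalk functor $(-)_K\co\Rweylsheaf{G}{R}\to\Gmoddisc{R}{W_G K}$ preserves monomorphisms. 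Being the left adjoint of the Weyl skyscraper functor $i_* G\underset{N_G K}{\times}(-)$ in the top row of the diagram above, its right adjoint therefore carries injective discrete $R[W_G K]$-modules to injective objects of $\Rweylsheaf{G}{R}$; applied to $A$ this gives the first assertion.

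For the ``enough injectives'' claim I would copy the proof of Theorem~\ref{thm:enoughinj} verbatim, with $\sub G$ as base space and the top-row adjunction of the diagram above in place of Proposition~\ref{equiadjunct}. Pick a representative $K_O$ of each orbit $O$ of the conjugation action on $\sub G$; since $\stab_G(K_O)=N_G K_O$, the Weyl-sheaf analogue of Lemma~\ref{restconst} gives an isomorphism $E|_O\cong G\underset{N_G K_O}{\times}E_{K_O}$, and $E_{K_O}$ is a discrete $R[W_G K_O]$-module because $E$ is a Weyl-$G$-sheaf. Embedding $E_{K_O}$ into an injective discrete $R[W_G K_O]$-module $I_{K_O}$ (the evident analogue of Lemma~\ref{lem:rgmodinjectives}) and transposing along the adjunction produces a map $E\to i_* G\underset{N_G K_O}{\times}I_{K_O}$ that is injective on every stalk over $O$. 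The product of these maps over all orbits exists by Theorem~\ref{thm:weylabelian}, is computed stalkwise, and is hence a monomorphism $E\to\prod_O^S\, i_* G\underset{N_G K_O}{\times}I_{K_O}$ whose target is a product of injective Weyl-$G$-sheaves, and therefore injective.

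The Godement resolution is then immediate: iterate the previous construction exactly as in Definition~\ref{godeequi}, embedding $E$ into such a product $I^0(E)$, passing to $\coker$, embedding the cokernel again, and repeating; exactness at each term holds because the next map is a monomorphism.

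The only substantive work beyond invoking the already-established adjunction of the preceding lemma and the abelian structure of Theorem~\ref{thm:weylabelian} is checking the Weyl-sheaf versions of Lemma~\ref{restconst} and of the stalkwise detection of monomorphisms and of products. I expect this to be the main obstacle, though a mild one: each reduces quickly to the corresponding fact for $G$-sheaves over $\sub G$ combined with the description of limits in Theorem~\ref{thm:weylabelian}, so the task is organising routine verifications rather than finding a new idea.
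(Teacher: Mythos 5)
Your proposal is correct and matches the paper's (very brief) argument: the paper likewise deduces injectivity of the Weyl skyscraper from the fact that its left adjoint, the stalk functor on Weyl-$G$-sheaves, preserves monomorphisms, and then simply invokes the ``evident analogues'' of Theorem \ref{thm:enoughinj} and Definition \ref{godeequi}, which is exactly what you spell out. One small caution: the infinite product of Weyl-$G$-sheaves is $\Weyl \prod^S_i \inc(-)$ and is not literally computed stalkwise, but your monomorphism check still goes through because, as in the proof of Theorem \ref{thm:enoughinj}, one composes with the projection to each factor and detects monomorphisms on stalks.
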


\section{Sheaves over a diagram of spaces}\label{sec:diagramsheaves}

A profinite space is an inverse limit of finite discrete spaces.
It is logical to ask if this structure can be applied 
to describe the category of sheaves over a profinite space $X$ in terms of 
a diagram of sheaves over the finite spaces from the limit defining $X$. 
Working equivariantly for a profinite group $G$, we further want a finite group action
on each of the finite spaces, with $G$ the limit of those finite groups. 
We provide an equivalence between a class of diagrams of equivariant sheaves 
and $G$-sheaves of $R$-modules over $X$ in Theorem \ref{thm:diagramsheafcolim}.
Analogous statements and proofs hold for equivariant sheaves of sets. 

The starting point is to extend the notion of push forward and pull back sheaves 
of Section \ref{sec:pbpf} to include a change of group. A full treatment of change of group functors would take too long, so we
focus on the case of most interest to the aim of this section. 

\begin{lemma}\label{lem:doublechangefunctor}
Let $N$ be an open normal subgroup of a profinite group $G$ and $p^G \co G \to G/N$ the projection. 
Let $p \co X \to Y$ be a surjective map from a profinite $G$-space $X$ to a $G/N$ space $Y$
such that $p^N(g) p(x) =  p(gx)$ for all $g \in G$ and $x \in X$. 

There is an adjunction 
\[
\adjunction
{\bar{p}^*}
{\Reqsheaf{G/N}{Y}{R}}
{\Reqsheaf{G}{X}{R}}
{\bar{p}_*}
\]
with the functors explained in the following construction.
\end{lemma}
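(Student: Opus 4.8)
The statement to be proven is Lemma~\ref{lem:doublechangefunctor}: for $N \trianglelefteqslant G$ open, $p^G \co G \to G/N$ the projection, and $p \co X \to Y$ a surjection of a profinite $G$-space onto a profinite $G/N$-space compatible with the group quotient, there is an adjunction $\bar p^* \dashv \bar p_*$ between $\Reqsheaf{G/N}{Y}{R}$ and $\Reqsheaf{G}{X}{R}$. The first task is simply to define the two functors, since the statement defers this to ``the following construction''. For $\bar p^*$, take a $G/N$-sheaf $F$ over $Y$, regard it as a $G$-sheaf over $Y$ by inflation along $p^G$ (the action of $N$ being trivial), and then apply the ordinary equivariant pullback $p^*$ of Section~\ref{sec:pbpf}; one checks $p^*F$ is naturally a $G$-sheaf over $X$ using the compatibility $p^G(g)p(x) = p(gx)$, which is exactly what makes $p$ a ``$G$-map'' after inflating the target. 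For $\bar p_*$, take a $G$-sheaf $E$ over $X$, form the ordinary push forward $p_* E$ (a sheaf over $Y$ with a $G$-action), and observe that the compatibility condition forces $N$ to act trivially on $p_*E$: a section over an open $V \subseteq Y$ lives on $p^{-1}(V)$, which is $N$-invariant since $p(nx) = p^G(n)p(x) = p(x)$, and by Corollary~\ref{cor:sectionquot} combined with the argument of Proposition~\ref{prop:compactdiscrete} the $N$-action on such sections is trivial. This exhibits $p_*E$ as a $G/N$-sheaf over $Y$, which is $\bar p_* E$.

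The second task is the adjunction isomorphism
\[
\Reqsheaf{G}{X}{R}(\bar p^* F, E) \cong \Reqsheaf{G/N}{Y}{R}(F, \bar p_* E).
\]
The plan here is to reduce to the non-equivariant (or rather, $G$-equivariant with trivial change of group) adjunction $p^* \dashv p_*$ established in Section~\ref{sec:pbpf}, exactly in the style of the proof of that proposition: both sides embed into the set of maps of underlying sheaves of $R$-modules over $X$ and $Y$ respectively, and one checks that a map is compatible with the $G$-action on the source side precisely when its adjoint is compatible with the $G/N$-action on the target side. Concretely, write the translation maps as $g \co X \to X$, $g \co Y \to Y$ (the latter depending only on $p^G(g)$), use that translations are homeomorphisms so $g^* = g_*$, and verify that the naturality squares encoding $G$-equivariance of $\bar p^* F \to E$ correspond under adjunction to the naturality squares encoding $G/N$-equivariance of $F \to \bar p_* E$. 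The point that $N$ acts trivially everywhere in sight means the $G$-equivariance condition descends to a $G/N$-equivariance condition without loss of information.

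\textbf{The main obstacle.} The genuinely delicate point is verifying that $\bar p_* E$ really is a $G/N$-sheaf, i.e.\ that the $N$-action on $p_* E$ is trivial rather than merely ``continuous and locally trivial''. The push forward is defined as a sheafification, so one must argue at the level of stalks or on a basis: for $K \subseteq Y$ open-closed, $(p_*E)(K) = E(p^{-1}K)$, and $p^{-1}K$ is open-closed in $X$ hence compact, so $E(p^{-1}K)$ is a discrete $\stab_G(p^{-1}K)$-module; the compatibility condition gives $N \leqslant \stab_G(p^{-1}K)$ and, crucially, that $n \ast s = s$ for $n \in N$ because $n$ acts trivially on the base $p^{-1}K$ and, by the local sub-equivariance of Proposition~\ref{prop:compactdiscrete} applied with the closed subgroup $N$, must fix $s$ after passing to a further invariant subdomain — but here every point of $p^{-1}K$ is $N$-fixed on the base, so the stalk action of $N$ is the one recorded by the $G$-sheaf structure, which we do \emph{not} a priori know to be trivial. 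The resolution is that the hypothesis of the lemma is really being used through the requirement that $E$ is a $G$-sheaf whose restriction along $p$ we want to push; one should check whether the intended claim needs $N$ to act trivially on $E$ itself (which is false in general) or only that the induced action on $p_*E$ over $Y$ descends — and I expect the correct reading, consistent with the pullback direction using inflation, is that $\bar p_*$ is defined on all of $\Reqsheaf{G}{X}{R}$ by taking $N$-fixed points stalkwise after pushing forward, i.e.\ $\bar p_* E = \Weyl$-style fixed-point construction relative to $N$, so that the real content is an adjunction $(\text{inflate} \circ p^*) \dashv ((-)^N \circ p_*)$. Pinning down this definition correctly is where I would spend the most care; once the functors are the right ones, the adjunction itself follows formally from composing the inflation--fixed-points adjunction with the $p^* \dashv p_*$ adjunction of Section~\ref{sec:pbpf}.
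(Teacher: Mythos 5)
Your final reading is the paper's: the Construction following the lemma defines $\bar{p}^*$ as inflation along $p^G$ followed by the pullback $U \mapsto D(pU)$, and $\bar{p}_*E$ by $V \mapsto E(p^{-1}V)^N$, with the adjunction verified via the triangle identities for the inflation--fixed-points adjunction together with $p(p^{-1}V)=V$ — exactly the factorisation $(\text{inflate}\circ p^*) \dashv ((-)^N\circ p_*)$ you propose. The one caution is that your first paragraph's claim that $N$ acts trivially on $p_*E$ (so that no fixed points would be needed) is false in general, since Corollary \ref{cor:sectionquot} only gives that each section is fixed by \emph{some} open subgroup rather than by $N$ itself; but you diagnose and repair this yourself in the closing paragraph, so the proposal as finally stated is correct and follows the paper's route.
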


\begin{construction}
Let $D$ be a $G/N$-sheaf of $R$-modules over $Y$. Ignoring the group actions, 
$p^*D$ is a sheaf of $R$-modules over $X$ taking values
\[
U \mapsto D(p U)
\]
for $U$ an open subset of $X$ (as the map $p$ is open).
As $Y$ is a $G/N$-space, $D(p U)$ has an action of $\stab_{G/N}(p U)$. 
As the map $p^G$ sends $\stab_{G}(U) \leqslant G$ to $\stab_{G/N}(p U)$, 
we may let $\stab_{G}(U)$ act on $D(p U)$ via $p^G$. 
This change of group functor (an instance of inflation) 
would normally be called $(p^G)^*$, we omit it from out notation
to avoid confusion with the change of base space functors. 
With this $G$-action, we have a $G$-sheaf over $X$ which we call $\bar{p}^*D$.

We construct the adjoint to $\bar{p}^*$. We will use the fact that for
any open subset $V$ of $Y$, $N$ is a subgroup of the stabiliser of $p^{-1}V$. 
Let $E$ be a $G$-sheaf of $R$-modules over $X$, then the functor 
\[
V \mapsto E(p^{-1}V)^N
\]
for $V$ an open subset of $Y$, defines a $G/N$ sheaf over $Y$.
We denote this sheaf by $\bar{p}_*E$. 

To see that we have an adjunction, one checks the triangle identities. 
The unit of this adjunction is the identity map, the counit at an open set $U$
\[
E(p^{-1}(p U))^N \longrightarrow E(U) 
\]
is given by combining the inclusion of fixed points with the restriction map. 
That the triangle identities hold 
comes from the triangle identity for the inflation-fixed point adjunction and the 
fact that $p(p^{-1} V) = V$ for any open $V$ in $Y$. 
\end{construction}

We now turn to the context of interest to us. 
Fix a cofiltered category $I$ and diagram of finite discrete groups and surjective group homomorphisms
\[
G_\bullet = \{G_i, \phi_{ij}^G \co G_i \to G_j \}
\] 
over $I$. 
Let $G$ be the limit of this diagram, with projection maps $p_i^G$, so that
$\phi_{ij}^G \circ p_i^G = p_j^G$. We let $N_i$ be the kernel of $p_i^G$.
Similarly, fix a 
diagram of finite discrete spaces and surjective maps
\[
X_\bullet = \{X_i, \phi_{ij} \co X_i \to X_j \}
\]
with projections maps $p_i$ such that $X_i$ is a $G_i$-space and the 
following diagram commutes.
\[
\xymatrix{
G_i \times X_i \ar[r] \ar[d]_{\phi_{ij}^G \times \phi_{ij}} &
X_i \ar[d]_{\phi_{ij}} \\
G_j \times X_j \ar[r] &
X_j 
}
\]
It follows that $G$ acts on $X$, the limit of the $X_i$, and the evident diagram of projection maps
and action maps commutes.

\begin{definition}
A \emph{$G_\bullet$-sheaf of $R$-modules over $X_\bullet$} is 
a $G_i$-equivariant sheaf $D_i \lra X_i$ of $R$-modules for each $i$, 
with maps of sheaves of $R$-modules  over $X_i$
\[
\alpha_{ij} \co (\bar{\phi}_{ij})^* D_j \lra D_i
\]
for each $i \to j$ in $I$.  
A map $f_\bullet \co D_\bullet \lra D'_\bullet$ of $G_\bullet$-sheaves of $R$-modules over $X_\bullet$ consists of maps 
$f_i \co D_i \to D_i'$ of $G_i$-sheaves of $R$-modules over $X_i$ that commute with the structure maps
\[
\xymatrix{
(\bar{\phi}_{ij})^* D_j \ar[r]^-{\alpha_{ij}} \ar[d]_-{\phi_{ij}^* f_j} & 
D_i \ar[d]^-{f_i} \\
(\bar{\phi}_{ij})^* D'_j \ar[r]_-{\alpha'_{ij}} & 
D'_i. \\
}
\]
We write $\Reqsheaf{G_\bullet}{X_\bullet}{R}$ for this category. 
\end{definition}

Note the contravariance introduced in the definition, the maps
of sheaves reverse the maps in the diagram $I$. 
It can be useful to write out how the functor $(\bar{\phi}_{ij})_*$ acts. 
For $U$ an open subset of $X_j$, the sheaf $D_i$ is defined by
\[
(\bar{\phi}_{ij})_* D_i (U) =  D_i (\phi_{ij}^{-1} U)^{N_j/N_i}.
\]
Moreover, $N_j/N_i$ is a subgroup of $G/N_i=G_i$ which is in the stabiliser of $\phi_{ij}^{-1} U$.

We can construct a $G_\bullet$ sheaf of $R$-modules over $X_\bullet$ from any $G$-sheaf over $X$
from push forward sheaves.

\begin{construction}
Let $E$ be a $G$-sheaf of $R$-modules over $X$. For each $i$ we have a push forward sheaf 
$E_i = (\bar{p}_i)_* E$ over $X_i$. For $i \to j$ in $I$, we have an equality
\[
E_j = (\bar{p}_j)_* E \overset{=}{\lra} (\bar{\phi}_{ij})_* (\bar{p}_i)_* E  = (\bar{\phi}_{ij})_* E_i
\]
of sheaves over $X_j$, using the equality $((-)^{N_i})^{N_j/N_i} = (-)^{N_j}$ 
and composition of the non-equivariant push forward functors.  
Taking the adjoint gives a map of sheaves over $X_i$
\[
\alpha_{ij} \co (\bar{\phi}_{ij})^* E_j \lra E_i.
\]
These sheaves and maps define a $G_\bullet$-sheaf of $R$-modules $\bar{p}_\bullet E$ over $X_\bullet$.
\end{construction}

In the reverse direction, we can construct a $G$-sheaf of $R$-modules over $X$ from a 
$G_\bullet$-sheaf of $R$-modules over $X_\bullet$ by taking a colimit of pullback sheaves. 

\begin{construction}
Given a $G_\bullet$-sheaf of $R$-modules $D_\bullet$ over $X_\bullet$, we have a sheaf
$(\bar{p}_i)^* D_i$ for each $i \in I$. For $i \to j$ in $I$, we have maps
\[
(\bar{p}_j)^* D_j = (\bar{p}_i)^* (\bar{\phi_{ij}})^* D_j \lra (\bar{p}_i)^* D_i
\]
of $G_j$-sheaves of $R$-modules over $X_j$. Define $D= \colim_i (\bar{p}_i)^* D_i$.
\end{construction}

The functor $\bar{p}_\bullet$ is part of an adjunction and is fully faithful, that is, 
$G$-sheaves over $X$ are a full subcategory of $G_\bullet$-sheaves over $X_\bullet$.
We can restrict that adjunction to an equivalence of categories. 

\begin{theorem}\label{thm:diagramsheafcolim}
There is an adjunction
\[
\adjunction{\colim_i (\bar{p}_i)^*}{\Reqsheaf{G_\bullet}{X_\bullet}{R}}{\Reqsheaf{G}{X}{R}}{\bar{p}_\bullet.}
\]
whose counit is an isomorphism. This adjunction restricts to an equivalence of categories
\[
\adjunction{\colim_i (\bar{p}_i)^*}{\colim \Reqsheaf{G_\bullet}{X_\bullet}{R}}{\Reqsheaf{G}{X}{R}}{\bar{p}_\bullet}
\]
where the left hand category is the full subcategory of $\Reqsheaf{G_\bullet}{X_\bullet}{R}$
of those sheaves $D_\bullet$ of $R$-modules over $X_\bullet$ whose adjoint structure maps 
\[
\bar{\alpha}_{ij} \co D_j \lra (\bar{\phi}_{ij})_* D_i
\]
are isomorphisms. 
\end{theorem}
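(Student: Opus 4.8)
The plan is to obtain the adjunction by combining the change‑of‑group‑and‑base adjunctions $(\bar{p}_i^*,\bar{p}_{i*})$ of Lemma~\ref{lem:doublechangefunctor} with the universal property of the (filtered) colimit over $I$, then to check the counit and the unit stalkwise. For the adjunction: a morphism $\colim_i (\bar{p}_i)^* D_i \to E$ in $\Reqsheaf{G}{X}{R}$ is exactly a cocone, i.e.\ a family of maps $g_i \co (\bar{p}_i)^* D_i \to E$ such that, for each $i \to j$ in $I$, $g_i$ precomposed with the structure map $(\bar{p}_j)^* D_j \to (\bar{p}_i)^* D_i$ (got from $\alpha_{ij}$ via $(\bar{p}_i)^*(\bar{\phi}_{ij})^* = (\bar{p}_j)^*$) equals $g_j$. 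By Lemma~\ref{lem:doublechangefunctor} each $g_i$ corresponds to $\tilde{g}_i \co D_i \to (\bar{p}_i)_* E = (\bar{p}_\bullet E)_i$, and using naturality of the unit and counit of $(\bar{p}_i^*,\bar{p}_{i*})$ together with the identity $(\bar{\phi}_{ij})_*(\bar{p}_i)_* = (\bar{p}_j)_*$, the cocone condition on the $g_i$ translates precisely into the statement that the $\tilde{g}_i$ form a map $D_\bullet \to \bar{p}_\bullet E$ of $G_\bullet$‑sheaves. This bijection of hom‑sets is natural, giving the adjunction; matching the two compatibility conditions is a routine diagram chase.

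Next, the counit. Since a map of $G$‑sheaves of $R$‑modules which is an isomorphism on every stalk is an isomorphism \cite[Theorem~3.4.10]{tenn}, it suffices to check the counit $\colim_i (\bar{p}_i)^*(\bar{p}_i)_* E \to E$ on stalks. Fix $x \in X$ and set $U_i = p_i^{-1}\{p_i(x)\}$, a clopen subset of $X$; as $i$ ranges over $I$ the $U_i$ form a neighbourhood basis of $x$. As pullback preserves stalks and $X_i$ is finite discrete, the stalk of $(\bar{p}_i)^*(\bar{p}_i)_* E$ at $x$ is $(\bar{p}_i)_*E(\{p_i(x)\}) = E(U_i)^{N_i}$, so the counit at $x$ is the canonical map $\colim_i E(U_i)^{N_i} \to E_x$. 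Because $G = \lim_i G_i$ we have $\bigcap_i N_i = \{e\}$, and since $G$ is compact and $I$ is cofiltered the $N_i$ are cofinal among the open normal subgroups of $G$. Given a germ at $x$, represent it by a section over some $U_i$; by Proposition~\ref{prop:compactdiscrete} that section restricts to an $N$‑equivariant section over an $N$‑invariant clopen neighbourhood of $x$ for some $N \opennormalsub G$, and passing to a $U_j$ inside that neighbourhood with $N_j \subseteq N$ (possible by the two cofinalities and cofilteredness of $I$) exhibits the germ in $E(U_j)^{N_j}$; so the map is surjective. Injectivity is the usual germ argument: if two such sections have the same germ at $x$ they agree on some $U_k$ with $k \to i,\, k \to i'$, and restriction carries $N_i$‑fixed sections to $N_k$‑fixed sections because restriction maps of an equivariant sheaf are equivariant and $N_k \subseteq N_i$. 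Hence the counit is an isomorphism.

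By construction the adjoint structure maps of $\bar{p}_\bullet E$ are the canonical identities $(\bar{p}_j)_* E = (\bar{\phi}_{ij})_*(\bar{p}_i)_* E$, so $\bar{p}_\bullet E$ lies in the subcategory $\colim\Reqsheaf{G_\bullet}{X_\bullet}{R}$. As the counit is an isomorphism, $\bar{p}_\bullet$ is fully faithful, so the adjunction restricts to an equivalence between $\Reqsheaf{G}{X}{R}$ and the essential image of $\bar{p}_\bullet$, which we have just seen lies inside $\colim\Reqsheaf{G_\bullet}{X_\bullet}{R}$. It remains to prove the reverse inclusion, i.e.\ that if $D_\bullet$ has all adjoint structure maps $\bar{\alpha}_{ij}$ isomorphisms then the unit $D_\bullet \to \bar{p}_\bullet\bigl(\colim_i (\bar{p}_i)^* D_i\bigr)$ is an isomorphism.

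For this, write $E = \colim_i (\bar{p}_i)^* D_i$ and check the unit stalkwise in each component. As the $p_i$ are open and the $X_i$ finite discrete, the presheaf colimit of the $(\bar{p}_i)^* D_i$ over the clopen basis is already a sheaf there (only finite covers are needed, and filtered colimits commute with the finite limits in the sheaf condition), so $E(V) = \colim_i D_i(p_i(V))$ for clopen $V$. Restricting to the cofinal family of indices $i$ admitting a map $i \to j$, the stalk of the $j$‑th component of $\bar{p}_\bullet E$ at $x_j \in X_j$ is
\[
E(p_j^{-1}\{x_j\})^{N_j} = \Big(\colim_{i \to j} D_i(\phi_{ij}^{-1}\{x_j\})\Big)^{N_j}.
\]
Every element of the term at level $i$ is already fixed by $N_i$, since the $G$‑action on $(\bar{p}_i)^* D_i$ is inflated from $G_i = G/N_i$; so the $N_j$‑action on this colimit is the filtered union of the finite actions of the quotients $N_j/N_i$, and $(-)^{N_j}$ commutes past the colimit, giving $\colim_{i\to j} D_i(\phi_{ij}^{-1}\{x_j\})^{N_j/N_i} = \colim_{i\to j}\bigl((\bar{\phi}_{ij})_* D_i\bigr)_{x_j}$. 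The hypothesis that each $\bar{\alpha}_{ij}$ is an isomorphism identifies each term with $(D_j)_{x_j}$, and the cocycle identity for the $\bar{\alpha}_{ij}$ shows these identifications carry all transition maps to the identity; hence the colimit is $(D_j)_{x_j}$ and its composite with the unit is the identity. Thus the unit is an isomorphism on all stalks of all components, hence an isomorphism, which completes the proof. I expect the crux to be exactly this last computation, and within it the two delicate points: justifying that $(-)^{N_j}$ commutes with the filtered colimit --- which rests on elements at level $i$ being automatically $N_i$‑fixed, so the ambient actions are pro‑finite over finite quotients --- and checking via the cocycle identity that the transition maps of the resulting colimit trivialise under the $\bar{\alpha}_{ij}$. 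Setting up the adjunction (matching the two compatibility conditions) is routine but also requires some bookkeeping.
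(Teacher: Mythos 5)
Your proposal is correct and follows essentially the same route as the paper: the adjunction is assembled from the levelwise adjunctions of Lemma \ref{lem:doublechangefunctor} together with the universal property of the colimit, the counit is checked stalkwise using the neighbourhood basis $p_i^{-1}\{p_i x\}$ and discreteness of the stalks, and the unit is checked stalkwise over the discrete spaces $X_i$ by commuting fixed points past the filtered colimit and invoking the hypothesis on the $\bar{\alpha}_{ij}$. Your write-up is in fact slightly more careful than the paper's at two points the paper elides --- the bookkeeping identifying cocones with maps of $G_\bullet$-sheaves, and the justification that the presheaf colimit already computes the sheaf colimit on compact opens --- but the substance is identical.
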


\begin{proof}
That the two constructions fit into an adjunction is similar to the colimit--constant functor adjunction combined with 
the change of base space \emph{and} group functors of Lemma \ref{lem:doublechangefunctor}. 
The counit statement is mentioned (non-equivariantly) by Goodwillie and Lichtenbaum in the proof of \cite[Lemma 3.4]{GL01}.
Following that reference, we check that the counit is an isomorphism on stalks.  

Take some $x \in X$ and $E$ a $G$-sheaf of $R$-modules over $X$. 
We show the counit is an isomorphism by considering the stalk of $E$ at $x$, as follows.
\begin{align*}
(\colim_i (\bar{p}_i)^* (\bar{p}_i)_*E)_x
& =
\colim_i ((\bar{p}_i)^* (\bar{p}_i)_*E)_x
=
\colim_i ((\bar{p}_i)_*E)_{p_i x} \\
& =
\colim_i ((\bar{p}_i)_*E) ( \{ p_i x \})
=
\colim_i E (p_i^{-1} \{ p_i x \})^{N_i} 
\overset{\cong}{\lra} 
E_x. 
\end{align*}
We use two facts to see that the final map is an isomorphism.
The first is that $x$
has a neighbourhood basis given by the open sets $p_i^{-1} \{ p_i x \}$.
The second is that $E_x$ is a discrete $R[\stab_G(x)]$-module, so
every element is fixed by some  $N_i$.

For the second statement, consider a $G_\bullet$-sheaf $D_\bullet$ of $R$-modules over $X_\bullet$ 
whose adjoint structure maps
\[
\bar{\alpha}_{ij} \co D_j \lra (\bar{\phi}_{ij})_* D_i
\]
are isomorphisms. We want to show that 
\[
D_i \lra (\bar{p}_i)_* \colim_k (\bar{p}_k)^* D_k
\]
is an isomorphism of sheaves over the discrete space $X_i$. 
Choose $x \in X_i$, then we have isomorphisms as follows. 
Since we are looking at stalks of a sheaf over a discrete space, we do not 
need to consider the effect of any sheafification functors. 
The first step is the definition of the functors, 
the second is the fact that fixed points commute with filtered colimits 
as the terms of all the colimits are discrete modules. We are also able to 
replace $N_i$ by $N_i/N_k$ in the third step as $D_k$ is a $G/N_k$-sheaf, that is 
$G$ acts on $(\bar{p}_k)^* D_k$ via the projection to $G/N_k$.  
\begin{align*}
((\bar{p}_i)_* \colim_k (\bar{p}_k)^* D_k)_x 
&=
(\colim_k (\bar{p}_k)^* D_k)(p_i^{-1} \{x\})^{N_i}  \\
&\cong 
\colim_k D_k(p_k (p_i^{-1} \{x\}))^{N_i}  \\
&\cong 
\colim_k D_k(p_k (p_i^{-1} \{x\}))^{N_i/N_k}  \\
&=
\colim_k D_k(\phi_{ki}^{-1} \{x\})^{N_i/N_k}  \\
&=
\colim_k ((\bar{\phi}_{ki})_* D_k)(\{x\})  \\
&\cong
D_i(\{x\}) = (D_i)_x
\end{align*}
The penultimate isomorphism is where we use our assumption that the adjoint structure maps of $D_\bullet$
are isomorphisms.
\end{proof}

Fixing $X= \sub G$ and $R=\bQ$, we may ask if the adjunctions of Lemma \ref{lem:doublechangefunctor}
and Theorem \ref{thm:diagramsheafcolim} pass to categories of Weyl sheaves.

\begin{lemma}
Let $N$ be an open normal subgroup of $G$, with $p^G \co G \to G/N$ the projection
and $p \co \sub G \to \sub G/N$ the induced map on spaces of closed subgroups. 

The functor $\bar{p}^*$ sends Weyl-$G/N$-sheaves to Weyl-$G$-sheaves. 
Using $\bar{p}^*$ for the functor on Weyl-sheaves, there is an adjunction 
\[
\adjunction
{\bar{p}^*}
{\Rweylsheaf{G/N}{\bQ}}
{\Rweylsheaf{G}{\bQ}}
{\Weyl \circ \bar{p}_* \circ \inc}.
\]
\end{lemma}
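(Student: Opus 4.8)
The plan is to verify the two assertions in turn: first that $\bar{p}^*$ preserves the Weyl condition, and then that the adjunction follows formally from the unrestricted adjunction of Lemma \ref{lem:doublechangefunctor} together with the $(\inc, \Weyl)$-adjunction of Lemma \ref{lem:weyladjunction}.

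For the first claim, recall that for a $G/N$-sheaf $D$ over $\sub G/N$ the stalk of $\bar{p}^*D$ at a closed subgroup $K$ of $G$ is, by the computation of stalks of pullbacks, $(\bar{p}^*D)_K \cong D_{p(K)} = D_{KN/N}$, with the $\stab_G(K)$-action obtained by restricting along $p^G \co G \to G/N$. If $D$ is a Weyl-$G/N$-sheaf then $D_{KN/N}$ is fixed by $KN/N$. Since $K$ maps into $KN/N$ under $p^G$, the action of $K$ on $(\bar{p}^*D)_K$ factors through $KN/N$ and hence is trivial. Therefore $\bar{p}^*D$ is a Weyl-$G$-sheaf, so $\bar{p}^*$ restricts to a functor $\Rweylsheaf{G/N}{\bQ} \to \Rweylsheaf{G}{\bQ}$.

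For the adjunction, I would build the right adjoint as the composite $\Weyl \circ \bar{p}_* \circ \inc$ and check the adjunction isomorphism directly by stringing together the two known adjunctions. For $F$ a Weyl-$G$-sheaf and $D$ a Weyl-$G/N$-sheaf, one has
\[
\Rweylsheaf{G}{\bQ}(\bar{p}^*D, F) \cong \Reqsheaf{G}{\sub G}{\bQ}(\inc \bar{p}^* D, \inc F)
\]
since $\inc$ is fully faithful (Lemma \ref{lem:weyladjunction}); note $\inc \bar{p}^* D = \bar{p}^* \inc D$ because $\bar{p}^*$ on Weyl-sheaves is by definition the restriction of $\bar{p}^*$ on all equivariant sheaves. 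Then Lemma \ref{lem:doublechangefunctor} gives
\[
\Reqsheaf{G}{\sub G}{\bQ}(\bar{p}^* \inc D, \inc F) \cong \Reqsheaf{G/N}{\sub G/N}{\bQ}(\inc D, \bar{p}_* \inc F),
\]
and finally the $(\inc, \Weyl)$-adjunction applied over the group $G/N$ yields
\[
\Reqsheaf{G/N}{\sub G/N}{\bQ}(\inc D, \bar{p}_* \inc F) \cong \Rweylsheaf{G/N}{\bQ}(D, \Weyl \bar{p}_* \inc F).
\]
Composing these natural isomorphisms gives the desired adjunction. I would also remark briefly that $\Weyl \bar{p}_* \inc F$ is genuinely a Weyl-$G/N$-sheaf, which is automatic since $\Weyl$ lands in Weyl-sheaves by construction.

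The only point requiring a little care — and the step I would flag as the main obstacle — is the compatibility of the two pullback functors, namely that applying the Weyl-sheaf version of $\bar{p}^*$ to $D$ and then including into all $G$-sheaves agrees with first including $D$ and then applying the general $\bar{p}^*$. This is true essentially by definition (the Weyl-sheaf $\bar{p}^*$ was defined as the restriction of the general one, which is legitimate precisely because of the first claim), but it must be stated explicitly so that the chain of adjunctions type-checks. Everything else is a formal composition of established adjunctions, so no serious difficulty arises beyond bookkeeping with the change-of-group conventions.
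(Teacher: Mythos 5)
Your proposal is correct and follows essentially the same route as the paper: the same stalk computation $(\bar{p}^*D)_K \cong D_{KN/N}$ with the $K$-action factoring through $KN/N$ to establish the Weyl condition, and the adjunction obtained by combining Lemma \ref{lem:doublechangefunctor} with the full faithfulness of $\inc$ from Lemma \ref{lem:weyladjunction}. You merely spell out the chain of hom-set isomorphisms that the paper leaves implicit, which is a harmless (indeed helpful) elaboration.
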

\begin{proof}
Let $E$ be a Weyl-$G/N$-sheaf, and choose $K$ a closed subgroup of $G$. Then
\[
(\bar{p}^*E)_K
=
\varepsilon^* E_{pK}
=
\varepsilon^* E_{KN/N}.
\]
Since $E$ is a Weyl-$G/N$-sheaf, $E_{KN/N}$ is fixed by $KN/N = K/(K \cap N)$.
The group $K$ acts on $\varepsilon^* E_{KN/N}$ by passing to the quotient
and hence acts trivially. 

That we have an adjunction as stated follows from the preceding result and 
the fact that the inclusion of Weyl sheaves into equivariant sheaves is fully faithful, 
see Lemma \ref{lem:weyladjunction}.
\end{proof}

Combining this lemma with Theorem \ref{thm:diagramsheafcolim} gives the following corollary.

\begin{corollary}\label{cor:weyldiagrams}
Let $\sub_\bullet G$ be the diagram coming from the finite spaces $\sub G/N$ and the finite groups $G/N$, indexed over the 
diagram of open normal subgroups $N$ of $G$ and inclusions. 

There is an equivalence of categories
\[
\adjunction
{\underset{N \opennormalsub G}{\colim} \bar{p}_N^*}
{\colim \weylsheaf{G_\bullet}}
{\weylsheaf{G}}
{\bar{p}_\bullet}
\]
where the left hand category is the full subcategory of $\Reqsheaf{G_\bullet}{\sub G_\bullet}{\bQ}$
of those Weyl sheaves $D_\bullet$ of $\bQ$-modules over $\sub G_\bullet$ whose adjoint structure maps 
\[
\bar{\alpha}_{M,N} \co D_N \lra (\bar{\phi}_{M,N})_* D_M
\]
for open normal subgroups $M \leqslant N$ are isomorphisms. 
\end{corollary}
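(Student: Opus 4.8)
The plan is to obtain the corollary by restricting the equivalence of Theorem~\ref{thm:diagramsheafcolim} to the appropriate Weyl subcategories. First I would check that the hypotheses of Section~\ref{sec:diagramsheaves} are met by the system $\sub_\bullet G$: each quotient $G/N$ is a finite discrete group, each $\sub(G/N)$ is a finite discrete $G/N$-space under conjugation, $G = \lim_N G/N$, and $\sub G = \lim_N \sub(G/N)$ (this last identification is exactly the content of Lemma~\ref{lem:intersections}), with all conjugation actions compatible. Theorem~\ref{thm:diagramsheafcolim} then supplies mutually inverse equivalences $\colim_N \bar p_N^*$ and $\bar p_\bullet$ between $\Reqsheaf{G}{\sub G}{\bQ}$ and the full subcategory of $\Reqsheaf{G_\bullet}{\sub G_\bullet}{\bQ}$ cut out by the isomorphism condition on adjoint structure maps.

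It then remains to see that both functors respect Weyl-ness, since a full subcategory inherits full faithfulness and an equivalence whose two functors each preserve a chosen full subcategory on either side restricts to an equivalence of those subcategories. For $\colim_N \bar p_N^*$ this is immediate: if $D_\bullet$ is levelwise a Weyl sheaf, then each $\bar p_N^* D_N$ is a Weyl-$G$-sheaf by the preceding lemma, and the (filtered) colimit of Weyl-$G$-sheaves is again a Weyl-$G$-sheaf, since its stalk at $K$ is a colimit of $K$-fixed modules (compare the proof of Theorem~\ref{thm:weylabelian}).

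The substantive point is that $\bar p_\bullet$ sends Weyl-$G$-sheaves to levelwise Weyl diagrams, i.e.\ that $(\bar p_N)_* E$ is a Weyl-$G/N$-sheaf when $E$ is a Weyl-$G$-sheaf. Because $\sub(G/N)$ is finite discrete, the stalk of $(\bar p_N)_* E$ at $\bar K = KN/N$ is the $\stab_{G/N}(\bar K) = N_G(NK)/N$-module $E(O(N,NK))^N$ of $N$-equivariant sections, so it suffices to show that every $N$-equivariant section $s$ over $O(N,NK)$ is in fact $NK$-equivariant. For $k \in K$ and $L \in O(N,NK)$ one has $K \subseteq NK = NL$, so write $k = n'\ell$ with $n' \in N$, $\ell \in L$; then $k^{-1}Lk = n_1^{-1}Ln_1$ with $n_1 = \ell^{-1}n'\ell \in N$ and $k n_1^{-1} = \ell$. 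Using the key formula relating the $G$-actions on sections and stalks, $N$-equivariance of $s$, and the fact that $s_L \in E_L$ is $L$-fixed (as $E$ is Weyl), one computes $(k\ast s)_L = k(s_{k^{-1}Lk}) = (kn_1^{-1})(s_L) = \ell(s_L) = s_L$ for every such $L$, hence $k\ast s = s$. The identical argument, with $G/M$ and $N/M$ in the roles of $G$ and $N$ (for $M \subseteq N$ open normal in $G$), shows $(\bar\phi_{M,N})_* D_M$ is a Weyl-$G/N$-sheaf whenever $D_M$ is a Weyl-$G/M$-sheaf, so the structure maps $\bar\alpha_{M,N}$ of $\bar p_\bullet E$ are maps of Weyl sheaves, and isomorphisms by Theorem~\ref{thm:diagramsheafcolim}.

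Putting these together, the equivalence restricts to one between $\weylsheaf{G}$ and the full subcategory of levelwise-Weyl objects of $\Reqsheaf{G_\bullet}{\sub G_\bullet}{\bQ}$ satisfying the isomorphism condition on adjoint structure maps, which is precisely the category $\colim\weylsheaf{G_\bullet}$ appearing in the statement. I expect the calculation in the third paragraph to be the only real obstacle: that push-forward with change of group preserves Weyl-ness, i.e.\ that $N$-equivariance of a section over $O(N,NK)$ already forces $NK$-equivariance once the sheaf is Weyl. Everything else is formal given Theorem~\ref{thm:diagramsheafcolim}, the preceding lemma, and Theorem~\ref{thm:weylabelian}.
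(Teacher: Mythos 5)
Your proposal is correct, and its skeleton — restrict the equivalence of Theorem~\ref{thm:diagramsheafcolim} to the Weyl subcategories using the lemma immediately preceding the corollary — is the same as the paper's, which simply states that combining that lemma with the theorem gives the result. The genuine difference is in how the right adjoint is handled. The paper's lemma takes the right adjoint on Weyl categories to be $\Weyl \circ \bar{p}_* \circ \inc$, i.e.\ it coreflects back into Weyl sheaves after pushing forward, and never asserts that $(\bar{p}_N)_*$ itself preserves Weyl-ness. You instead prove directly that if $E$ is a Weyl-$G$-sheaf then $(\bar{p}_N)_* E$ is already a Weyl-$G/N$-sheaf, via the computation that an $N$-equivariant section $s$ over $O(N,NK)$ of a Weyl sheaf satisfies $(k\ast s)_L = k(s_{k^{-1}Lk}) = kn_1^{-1}(s_L) = \ell(s_L) = s_L$ for $k = n'\ell$ with $n'\in N$, $\ell\in L$ and $n_1 = \ell^{-1}n'\ell \in N$; I have checked this algebra and it is correct (it uses normality of $N$, the $N$-equivariance of $s$, and the $L$-fixedness of $s_L$ exactly as you say). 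This buys something real: since the corollary's right adjoint $\bar{p}_\bullet$ is, on the face of it, the levelwise push-forward of Theorem~\ref{thm:diagramsheafcolim} with no $\Weyl$ inserted, one needs to know it lands in levelwise Weyl diagrams for the restricted equivalence to typecheck without further argument about units and counits surviving the coreflection; your computation supplies precisely this verification, which the paper leaves implicit. The remaining ingredients you use — $\bar{p}_N^*$ preserving Weyl-ness from the preceding lemma, filtered colimits of Weyl sheaves being Weyl as in the proof of Theorem~\ref{thm:weylabelian}, and the identification of $\sub G$ with $\lim_N \sub(G/N)$ — match the paper's intended route.
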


We can summarise the adjunctions of this paper into the following diagram, 
where arrows indicate inclusions of full subcategories.
\[
\xymatrix@C-0.35cm{
& 
\colim \weylsheaf{G_\bullet} \ar[r] \ar@{<->}[d]_-\cong \ar@{^{(}->}[r] 
&
\colim \Reqsheaf{G_\bullet}{\sub_\bullet G}{\bQ}
\ar@{<->}[d]_-\cong
\ar@{^{(}->}[r] 
&
\Reqsheaf{G_\bullet}{\sub_\bullet G}{\bQ}
\\
\mackey{G} \ar@{<->}[r]_-\cong &
\Rweylsheaf{G}{\bQ}
\ar@{^{(}->}[r] &
\Reqsheaf{G}{\sub G}{\bQ} 
\ar@{^{(}->}[r] &
\Reqpresheaf{G}{\bQ}{\sub G} 
}
\]


\begin{example}\label{ex:fixedpoints}
Let $V$ be a discrete $R[G]$-module, we want to make a Weyl-$G$-sheaf over $\sub G$
which at $K$ has stalk the $K$-fixed points of $V$. 
This is straightforward using the constructions of this section. 

For $N$ an open normal subgroup of $G$ there is a $G/N$-sheaf
$\textrm{Fix}_N(V)$ on the finite $G/N$-space $\sub G/N$ defined by 
\[
NK/N \mapsto V^{NK}.
\]
From these sheaves we define $\textrm{Fix}_{\bullet}(V)$, a $G_\bullet$-sheaf on $\sub G_\bullet$.
The structure maps
\[
\bar{\phi}_{M,N}^* \textrm{Fix}_N(V) \lra \textrm{Fix}_M(V)
\]
are induced by the inclusions $V^{NK} \longrightarrow V^{MK}$. 
Applying the colimit functor to this $G_\bullet$-sheaf 
gives a $G$-sheaf over $\sub G$
\[
\textrm{Fix}(V)=\colim_{N \opennormalsub G} \bar{p}_N^* \textrm{Fix}_{\bullet}(V).
\]
We calculate the stalks of $\textrm{Fix}(V)$ using the same
techniques as the proof of Theorem \ref{thm:diagramsheafcolim}.
The stalk of $\textrm{Fix}(V)$ at $K \in \sub G$ is 
\[
\colim_{N \opennormalsub G} V^{NK} \cong V^K.
\]
\end{example}

\appendix

\section{Profinite groups, profinite spaces and discrete modules}\label{Sec:profinitestuff}

We give a few reminders of useful facts on profinite groups, profinite spaces and discrete modules.
More details can be found in Wilson \cite{wilson98} or Ribes and Zalesskii \cite{rz00}.

A \emph{profinite group} is a compact, Hausdorff, totally disconnected
topological group.
A profinite group $G$ is homeomorphic to the inverse limit of its quotients by open normal subgroups:
\[
G \cong \underset{N \opennormalsub G}{\lim}  G/N \subseteq \underset{N \opennormalsub G}{\prod} G/N .
\]
The limit has the canonical topology which can either be described as the subspace topology on the product
or as the topology generated by the preimages of the open sets in
$G/N$ under the projection map $G \to G/N$, as $N$ runs over the open normal subgroups of $G$.
It follows that the open normal subgroups form a neighbourhood basis of the identity. 
Moreover, the intersection of all open normal subgroups is the trivial group, see also Lemma \ref{lem:intersections}.

Closed subgroups and quotients by closed normal subgroups of profinite groups are also profinite.
A subgroup of a profinite group is open if and only if it is of finite index and closed.
The trivial subgroup $\{ e\}$ is open if and only
if the group is finite.
Any open subgroup $H$ contains an open normal subgroup, the \emph{core} of $H$ in $G$,
which is defined as the finite intersection
\[
\core_G (H) = \bigcap_{g \in G} g H g^{-1}.
\]
If a (not necessarily closed) subgroup $H$ of $G$ contains a subgroup $K$ which is open in $G$, 
then $H$ itself is open. 

We can also define a \emph{profinite topological space}
to be a Hausdorff, compact and totally disconnected topological space.
As with profinite groups, such a space is homeomorphic to
the inverse limit of a cofiltered diagram of finite discrete spaces.
Moreover, a profinite topological space has an open-closed (hence compact-open) basis.

For $G$ a topological group, an action of $G$ on $X$ is 
a map $G \times X \to X$ which is associative and unital. 
We often refer to such an $X$ as a \emph{$G$-space}. 
A \emph{map of $G$-spaces} $f \co X \lra Y$ is a 
map of topological spaces which commutes with the action of $G$, that is, 
$f(gx) = g f(x)$ for all $x \in X$ and $g \in G$. 
We often refer to such a map as a \emph{$G$-equivariant} map.

We now give a few key results about the action of a profinite group on topological spaces.

\begin{lemma}\label{lem:restrictequi}
Let $G$ be a profinite group and $X$ a $G$-space. 
If $U$ is an open compact subset of $X$, then there exists an open
normal subgroup $N$ of $G$ such that $NU= U$. 
\end{lemma}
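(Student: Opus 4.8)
The plan is to exploit that $U$ is compact-open and that $G$ is profinite, so that the action map $G \times X \to X$ is continuous with the open normal subgroups of $G$ forming a neighbourhood basis of the identity. First I would observe that for each point $u \in U$, continuity of the action at $(e,u)$ gives an open normal subgroup $N_u$ of $G$ and an open neighbourhood $V_u$ of $u$ with $N_u V_u \subseteq U$; here one uses that the open normal subgroups are a neighbourhood basis of $e$ in $G$ (a standard fact recalled in Appendix \ref{Sec:profinitestuff}), so the basic open sets of $G$ at $e$ may be taken to be open normal subgroups. The sets $\{V_u\}_{u \in U}$ cover $U$, and since $U$ is compact we extract a finite subcover $V_{u_1}, \dots, V_{u_n}$. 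Setting $N' = \bigcap_{i=1}^n N_{u_i}$, which is again an open normal subgroup as a finite intersection, we get $N' U \subseteq U$: indeed any $x \in U$ lies in some $V_{u_i}$, so $N' x \subseteq N_{u_i} V_{u_i} \subseteq U$.

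The inclusion $N'U \subseteq U$ is not yet equality, so the next step is to upgrade it. This is immediate: if $N' U \subseteq U$ then applying $(N')^{-1} = N'$ (it is a subgroup) gives $U = N'(N'U) \subseteq N' U$, hence $N' U = U$. Taking $N = N'$ finishes the argument.

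The only mild subtlety — and the step I would flag as the one needing care — is justifying that the neighbourhood of the identity appearing from continuity of the action can be shrunk to an open \emph{normal} subgroup; this is exactly the defining property of a profinite group that its open normal subgroups form a neighbourhood basis of $e$, so no real obstacle arises. Everything else is a routine compactness argument. I expect the proof in the paper to be essentially this one-paragraph argument.
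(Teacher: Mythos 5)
Your argument is correct and is essentially the paper's proof: continuity of the action at each $(e,u)$ plus the neighbourhood basis of open normal subgroups, a finite subcover by compactness, and the intersection of the finitely many $N_{u_i}$. The only difference is that you spell out the upgrade from $N'U \subseteq U$ to equality (which follows most simply from $e \in N'$, giving $U \subseteq N'U$), a step the paper leaves implicit.
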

\begin{proof}
By continuity of the action map and the open normal subgroups forming a 
neighbourhood basis of the identity, for each $x \in U$ there 
are open normal subgroups $N_x$ and open neighbourhoods
$U_x \ni x$ such that $N_x U_x \subseteq U$. 
Take a finite subcover $\left\lbrace U_{x_i}\mid 1\leq i\leq n\right\rbrace$ of $U$,
then
\[
N=\underset{1\leq i\leq n}{\bigcap}N_{x_i}
\] 
is an open normal subgroup of $G$
which satisfies $N U =U$.
\end{proof}

We may think of this as an application of the fact that profinite groups have many ``small subgroups''. 
By way of comparison, Lie groups have no non-trivial ``small subgroups''. 
For example, the proper closed subgroups of $S^1$ are the finite cyclic groups.
Thus, a (sufficiently)  small neighbourhood of the identity of a Lie group 
will only contain the trivial subgroup.

\begin{lemma}\label{lem:openstab}
Let $G$ be a profinite group and $X$ a $G$-space.
If $U\subseteq X$ is compact and open, then the stabiliser subgroup of $U$
\begin{align*}
\stab_G(U)=\left\lbrace g\in G\mid gU= U\right\rbrace
\end{align*}
is an open subgroup of $G$. Hence, $\stab_G(U)$ has finite index in $G$.
\end{lemma}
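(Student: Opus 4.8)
The plan is to show $\stab_G(U)$ is open by exhibiting, for each $g \in \stab_G(U)$, an open neighbourhood of $g$ contained in $\stab_G(U)$; equivalently, since $\stab_G(U)$ is a subgroup, it suffices to produce an open normal subgroup $N$ of $G$ with $N \subseteq \stab_G(U)$, because then $\stab_G(U)$ is a union of cosets $gN$ and hence open. This is exactly what Lemma \ref{lem:restrictequi} gives: applied to the compact open set $U$, it yields an open normal subgroup $N$ with $NU = U$. For any $n \in N$ we then have $nU \subseteq NU = U$, and applying the same containment to $n^{-1} \in N$ gives $n^{-1}U \subseteq U$, hence $U \subseteq nU$, so $nU = U$ and $n \in \stab_G(U)$. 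Thus $N \leqslant \stab_G(U)$.

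From here the first conclusion is immediate: $\stab_G(U) = \bigcup_{g \in \stab_G(U)} gN$ is a union of open sets, hence open. For the ``finite index'' clause, I would invoke the standard fact about profinite groups recalled in Appendix \ref{Sec:profinitestuff}: an open subgroup of a profinite (hence compact) group has finite index. Alternatively one can argue directly — the cosets of $\stab_G(U)$ form an open cover of the compact space $G$, hence a finite subcover, and distinct cosets are disjoint, so there are finitely many of them.

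I do not anticipate a genuine obstacle here; the content of the statement is entirely absorbed into Lemma \ref{lem:restrictequi}, and the only thing to be careful about is the small two-line argument that $NU = U$ forces $nU = U$ for each individual $n \in N$ (one needs to use that $N$ is a group so that $n^{-1}$ is also available, rather than merely $NU \subseteq U$). Everything else is a routine unwinding of definitions together with the cited compactness fact about open subgroups of profinite groups.
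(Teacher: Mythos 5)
Your proof is correct and follows exactly the paper's route: apply Lemma \ref{lem:restrictequi} to get an open normal subgroup $N$ with $NU=U$, deduce $N\leqslant\stab_G(U)$, and conclude openness (and finite index from compactness). You simply spell out the short verification that $NU=U$ forces $nU=U$ for each individual $n\in N$, which the paper leaves implicit.
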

\begin{proof}
The preceding lemma guarantees an open normal subgroup is contained in $\stab_G(U)$, 
hence $\stab_G(U)$ is open. 
\end{proof}

We now look at the action of profinite groups on $R$-modules, where $R$ is a 
unital ring. We assume that $R$ has trivial $G$-action. 
  
\begin{definition}\label{def:modulediscrete}
Let $M$ be a set with an (associative and unital) action of $G$.
We say that $M$ is \emph{discrete} if, when $M$ is equipped with the discrete topology, 
the action of $G$ on $M$ is continuous. 
A map of such sets is a $G$-equivariant map of sets.

Let $M$ be an $R$-module with an (associative and unital) action of $G$ that commutes with the action of $R$,
that is, an $R[G]$-module.
We say that $M$ is a \emph{discrete} $R[G]$-module if, when $M$ is equipped with the discrete topology, 
the action of $G$ on $M$ is continuous.  
The category of $R[G]$-modules is denoted $\Gmod{R}{G}$ and the full subcategory of 
discrete $R[G]$-modules is denoted $\Gmoddisc{R}{G}$. 
\end{definition}

We can describe discreteness in more algebraic terms and produce a discretisation functor
using this description. We give the following results for the case of $R[G]$-modules, 
but similar statements can be made for $G$-sets.

\begin{lemma}\label{lem:discretefixedpoints}
Let $G$ be a profinite group and $M$ an abelian group with an action of $G$. 
Then $M$ is discrete if and only if the canonical map
\[
\colim_{N \opennormalsub G} M^N \cong \colim_{H \opensub G} M^H \overset{\cong}{\lra} M
\]
is an isomorphism.
\end{lemma}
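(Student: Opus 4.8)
The plan is to make the colimit completely explicit, reduce the ``if and only if'' to a statement about stabilisers, and then match that statement directly to the definition of discreteness.

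First I would note that, ordering open subgroups (resp.\ open normal subgroups) of $G$ by reverse inclusion, both indexing posets are directed, since the intersection of two open (resp.\ open normal) subgroups is again open (resp.\ open normal), and that the transition maps $M^{H'} \hookrightarrow M^{H}$ for $H \subseteq H'$ are simply the inclusions of subgroups of $M$. Consequently each of the two colimits is the directed union $\bigcup_{H} M^{H}$ of these subgroups inside $M$, the canonical map to $M$ is the inclusion of that union and is in particular always a monomorphism, and the union is precisely the set of $m \in M$ whose stabiliser $\stab_G(m)$ contains an open subgroup. Using the appendix fact that every open subgroup $H$ contains the open normal subgroup $\core_G(H)$, the open normal subgroups are cofinal among the open subgroups, which gives the first displayed isomorphism $\colim_{N \opennormalsub G} M^N \cong \colim_{H \opensub G} M^H$; and using the appendix fact that a subgroup of $G$ containing an open subgroup is itself open, ``$\stab_G(m)$ contains an open subgroup'' is the same as ``$\stab_G(m)$ is open''. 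Thus the canonical map is an isomorphism if and only if it is surjective, if and only if every element of $M$ has open stabiliser.

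It then remains to show that $M$ is discrete if and only if every element of $M$ has open stabiliser. For the forward implication, given $m \in M$ I would use that the orbit map $G \to M$, $g \mapsto gm$, is continuous, being the composite $G \cong G \times \{m\} \hookrightarrow G \times M \to M$ of the continuous action map (continuous by the assumption that $M$ is discrete); since $\{m\}$ is open in the discrete space $M$, its preimage $\stab_G(m)$ is open. For the converse, assuming all stabilisers are open and giving $M$ the discrete topology, I would check continuity of the action $G \times M \to M$ at an arbitrary point $(g,m)$ by exhibiting the open neighbourhood $g\,\stab_G(m) \times \{m\}$ --- open because $\stab_G(m)$ is open and $\{m\}$ is open in $M$ --- whose image under the action is contained in $\{gm\}$, since $ghm = gm$ for $h \in \stab_G(m)$. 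I do not anticipate a genuine obstacle here; the only place that needs a little care is keeping the (reverse-inclusion) direction of the transition maps straight and recognising the filtered colimit of those inclusions as the union $\bigcup_H M^H$, after which the two recalled facts about open subgroups of profinite groups do all the remaining work.
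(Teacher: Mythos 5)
Your proof is correct and follows essentially the route the paper itself indicates: the paper gives no formal proof but remarks immediately after the lemma that the isomorphism of the two colimits is a cofinality argument (every open subgroup contains an open normal one) and that discreteness is equivalent to every element having open stabiliser, which is exactly the reduction you carry out. Your fleshing-out of the filtered colimit as the union $\bigcup_H M^H$ and the two continuity checks for the stabiliser characterisation are all sound.
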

The isomorphism of colimits is a cofinality argument, 
based on the fact that every open subgroup contains an open normal subgroup. 
The lemma can be rephrased as saying that $M$ is discrete if and only if 
for each element  $m \in M$ there is an open (normal)
subgroup $H$ of $G$ such that $m$ is $H$-fixed. 
Equally, we may say the the stabiliser of any point is open. 

\begin{examples}
The primary examples of discrete $R[G]$-modules are the modules $R[G/H]$ 
(the $R$-module with basis the cosets of $H$ in $G$) for $H$ an open subgroup of $G$.
To see this is discrete, note that the basis element $gH$ is fixed by the open subgroup $gHg^{-1}$.
A finite sum of such elements is fixed by the intersection of those subgroups, which is also open. 
Hence, every element is fixed by some open subgroup.  
Conversely, the module $R[G]$ is not discrete when $G$ is infinite as the basis element $e$ is 
not fixed by any open subgroup.  
\end{examples}

Given any $R[G]$-module $M$, the colimit of Lemma \ref{lem:discretefixedpoints} defines a discrete $R[G]$-module, 
as the following lemma states. 
This construction is part of an adjunction which also allows for a description of
limits in the category of discrete $R[G]$-modules. 
\begin{lemma}
For $G$ a profinite group, the inclusion 
\[
\Gmoddisc{R}{G} \lra \Gmod{R}{G}
\]
has a right adjoint, $\disc$, which is defined by 
\[
\disc (M) = \colim_{H \opensub G} M^H.
\]
We call this functor \emph{discretisation}.
The map 
\[
\disc (M) \lra M
\]
is a monomorphism, which is an isomorphism if $M$ is already discrete.
We sometimes call $\disc (M)$ the \emph{discrete part} of $M$.

The category of $R[G]$-modules has all small limits and colimits, 
defined in terms of underlying $R$-modules with induced $G$-actions.
Colimits in $\Gmoddisc{R}{G}$ are constructed similarly. 
A limit in $\Gmoddisc{R}{G}$ is given by constructing the limit in $\Gmod{R}{G}$
and then applying $\disc$. 
\end{lemma}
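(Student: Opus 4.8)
The plan is to treat the four assertions as a single package resting on Lemma \ref{lem:discretefixedpoints} and the fact that the category of left modules over the ring $R[G]$ is complete and cocomplete. First I would check that $\disc(M) = \colim_{H \opensub G} M^H$ really lands in $\Gmoddisc{R}{G}$. The collection of open subgroups of $G$ is cofiltered under intersection, and for $H' \leqslant H$ the inclusion $M^H \hookrightarrow M^{H'}$ makes $H \mapsto M^H$ a filtered system of $R$-submodules of $M$, so $\disc(M) = \bigcup_{H \opensub G} M^H$ as an $R[G]$-submodule of $M$; here the reduction to a genuine $R[G]$-submodule uses that every open subgroup contains an open normal one, so by cofinality the colimit may be computed over open normal $N$, and each $M^N$ is a $G/N$-module, hence a discrete $R[G]$-module. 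It then suffices to know that a filtered colimit of discrete $R[G]$-modules is discrete, which I establish as part of the colimit claim below. Functoriality of $\disc$ is immediate, since an $R[G]$-map $M \to M'$ restricts to $M^H \to (M')^H$ for every $H$.

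Next I would prove the adjunction. Given a discrete $R[G]$-module $D$ and an arbitrary $R[G]$-module $M$, I claim that post-composition with $\disc(M) \to M$ induces a bijection $\Hom_{R[G]}(D, \disc M) \to \Hom_{R[G]}(D, M)$: injectivity is clear as $\disc(M) \to M$ is injective, and for surjectivity, given $f \co D \to M$ and $d \in D$, Lemma \ref{lem:discretefixedpoints} supplies an open subgroup $H$ with $d \in D^H$, whence $f(d) \in M^H \subseteq \disc(M)$, so $f$ factors through $\disc(M)$. Naturality in both variables is routine, giving that $\disc$ is right adjoint to the inclusion, i.e.\ that $\Gmoddisc{R}{G}$ is a coreflective subcategory of $\Gmod{R}{G}$. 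The counit $\disc(M) \to M$ is the inclusion of a submodule, hence a monomorphism, and it is an isomorphism precisely when $M$ is discrete, again by Lemma \ref{lem:discretefixedpoints}.

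For the (co)limit statements, I would first record that $\Gmod{R}{G}$ is the category of left modules over the group ring $R[G]$, hence has all small limits and colimits, computed on underlying $R$-modules (indeed abelian groups) with the induced $G$-action. Since the inclusion $\Gmoddisc{R}{G} \to \Gmod{R}{G}$ is fully faithful, to see that colimits in $\Gmoddisc{R}{G}$ agree with those in $\Gmod{R}{G}$ it is enough to show a colimit of discrete modules taken in $\Gmod{R}{G}$ is discrete. Any colimit is a quotient of a coproduct; in $\bigoplus_i D_i$ each element has finitely many nonzero coordinates, each fixed by an open subgroup, so fixed by the (open) finite intersection, whence $\bigoplus_i D_i$ is discrete; a quotient of a discrete module is discrete since the image of a fixed element is fixed. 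For limits, $\disc$ is a right adjoint and therefore preserves limits, and a coreflective subcategory inherits limits by forming the limit in the ambient category and applying the coreflector: if $\{D_i\}$ is a diagram in $\Gmoddisc{R}{G}$ with limit $L = \ilim_i D_i$ formed in $\Gmod{R}{G}$, then $\disc(L)$ represents the limit, since for discrete $D$ one has $\Hom_{R[G]}(D, \disc L) \cong \Hom_{R[G]}(D, L) \cong \ilim_i \Hom_{R[G]}(D, D_i)$.

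The main obstacle is the verification that colimits of discrete $R[G]$-modules stay discrete, which is the only step requiring more than formal adjunction bookkeeping; it hinges on the finiteness of supports in a direct sum. The other delicate point, though minor, is the cofinality reduction from open subgroups to open normal subgroups when identifying $\disc(M)$ as an honest $R[G]$-submodule, since $M^H$ need not be $G$-stable for non-normal $H$.
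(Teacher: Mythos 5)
Your proof is correct and follows the same route the paper implicitly takes: the paper offers no proof beyond the remark that the image of a discrete module under an equivariant map lands in the discrete part of the codomain, which is exactly your surjectivity step in the adjunction argument. The remaining verifications you supply (cofinality of open normal subgroups, discreteness of coproducts and quotients, and the coreflector construction of limits) are all sound and simply make explicit what the paper leaves to the reader.
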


That the limit is as described follows from the fact that the image of a 
discrete module under a $G$-equivariant map is contained in the discrete part of the codomain. 

\begin{remark}
Note that nontrivial modules can have trivial discretisation, such as $\bQ[G]$ when $G$ is infinite. 
Indeed, an element of $\bQ[G]$ is a finite sum of (rational multiples of) basis elements.
Since the sum is finite, it cannot be fixed by an open subgroup (which is necessarily infinite). 
\end{remark}

Since finite limits commute with filtered colimits in  module categories, 
a finite limit
in $\Gmoddisc{R}{G}$ is given by the limit in $\Gmod{R}{G}$. The problem lies with the infinite product. 
Indeed, let $G = \adic{p}$ (for $p$ a prime) and consider the following product 
of discrete $\adic{p}$-modules.
\[
\prod_{n \geqslant 0} R[\bZ/p^n].
\]
Recall that the open subgroups of $\adic{p}$ are the subgroups $p^k \adic{p}$, for $k \geqslant 0$.
The element of the product consisting of 1 in each factor is not fixed by any open subgroup. 
Of course, we already expected a problem like this, as the infinite product of 
discrete spaces is not discrete. Similarly, an infinite product of discrete $R[G]$-modules would have to be 
more complicated than the underlying product.  

We finish this section by noting that we have enough injective discrete $R[G]$-modules. 
\begin{lemma}\label{lem:rgmodinjectives}
For  $G$ a profinite group and $R$ a ring, the category of discrete $R[G]$-modules has enough injectives.
\end{lemma}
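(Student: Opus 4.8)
The plan is to deduce this from the standard fact for \emph{all} $R[G]$-modules, transported along the discretisation adjunction. Recall that $\Gmod{R}{G}$ is just the category of left modules over the ordinary group ring $R[G]$ (with finite support), which is a genuine unital ring; hence by Baer's criterion it has enough injectives. So the first step is: given a discrete $R[G]$-module $M$, choose a monomorphism $j \co M \lra I$ with $I$ an injective object of $\Gmod{R}{G}$.

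The second step is to replace $I$ by its discrete part. By the Remark above, the image of the discrete module $M$ under the $G$-equivariant map $j$ is contained in $\disc I$, so $j$ factors as a monomorphism $M \lra \disc I$ followed by the inclusion $\disc I \hookrightarrow I$. It then only remains to check that $\disc I$ is injective in $\Gmoddisc{R}{G}$.

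For this I would argue that $\disc$, being the right adjoint of the inclusion $\iota \co \Gmoddisc{R}{G} \lra \Gmod{R}{G}$, preserves injective objects because $\iota$ is exact. Exactness of $\iota$ is the one point needing a small argument: kernels and cokernels of maps of discrete $R[G]$-modules are computed on underlying $R$-modules, and a submodule or quotient of a discrete module is again discrete (in a quotient, any element lifts to an element fixed by an open subgroup, by Lemma \ref{lem:discretefixedpoints}); hence a sequence of discrete modules is exact in $\Gmoddisc{R}{G}$ precisely when it is exact in $\Gmod{R}{G}$. Given this, for $I$ injective the functor $\Hom_{\Gmoddisc{R}{G}}(-,\disc I) \cong \Hom_{\Gmod{R}{G}}(\iota(-),I)$ is a composite of exact functors, so $\disc I$ is injective. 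Combining with the previous step, $M \lra \disc I$ is the required embedding into an injective discrete $R[G]$-module.

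There is no serious obstacle: the only things to be careful about are using the plain group ring $R[G]$ (so that ``enough injectives'' is immediate) rather than any completed group algebra, and the verification that $\iota$ is exact. Alternatively, one could simply observe that $\Gmoddisc{R}{G}$ is a Grothendieck category --- it is cocomplete, filtered colimits are exact, and $\bigoplus_{N \opennormalsub G} R[G/N]$ is a generator --- and invoke the theorem that every Grothendieck category has enough injectives.
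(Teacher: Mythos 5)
Your proposal is correct and follows essentially the same route as the paper: embed $M$ into an injective object $I$ of $\Gmod{R}{G}$, observe that the image of a discrete module is discrete so the embedding factors through $\disc I$, and note that $\disc$ preserves injectives because its left adjoint (the inclusion) preserves monomorphisms. The only difference is cosmetic --- you verify full exactness of the inclusion where preservation of monomorphisms already suffices.
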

\begin{proof}
The inclusion functor from 
discrete $R[G]$-modules to $R[G]$-modules preserves monomorphisms.
Hence, its right adjoint (discretisation) preserves injective objects. 
Let $A$ be a discrete $R[G]$-module, then as 
the category of $R[G]$-modules has enough injectives,
there is an injective $R[G]$-module $I$ and a monomorphism $i \co A \lra I$. 
Since the image of $A$ in $I$ is discrete, $i$ factors over a monomorphism
$A \lra \disc (I)$. 
\end{proof}

\section{Sheaves and presheaves}\label{Sec:sheavespresheaves}

We give the basic definitions and terminology, Swan \cite{swan}, Tennison \cite{tenn} 
or Bredon \cite{bredonsheaf} provide more details.

\begin{definition}\label{def:sheafandstalk}
Let $X$ be a topological space. 
A \emph{presheaf of sets over $X$} is a contravariant functor 
from the category of open sets of $X$ and inclusions to sets
\[
F \co \mcO(X)^{op} \lra \sets.
\]
A \emph{map of presheaves} is a natural transformation. 

We call an element of $F(U)$ a \emph{section} of $F$ supported on $U$. 
The \emph{stalk} of $F$ at $x \in X$ is the following colimit
\[
F_x = \colim_{U \ni x} F(U)
\]
and an element of the stalk is called a \emph{germ}.
For $s \in F(U)$, the image of $s$ in $F_x$ is denoted $s_x$.
\end{definition}

\begin{definition}
A \emph{sheaf of sets over $X$} is a presheaf of sets over $X$ 
that satisfies the \emph{sheaf condition}:
for each cover of an open set $U$, 
$U = \cup_{\lambda \in \Lambda} U_\lambda$, 
we have an equaliser diagram
\[
\xymatrix{
F(U) \cong \textnormal{eq} \Bigg(  
\displaystyle\coprod_{\lambda \in \Lambda} F(U_\lambda) 
\ar@<0.7ex>[r] \ar@<-0.7ex>[r]   &  
\displaystyle\coprod_{(\lambda,\mu) \in \Lambda \times \Lambda} F(U_\lambda \cap U_\mu)
\Bigg)  
}
\]
where the maps are induced by the restriction maps.
A \emph{map of sheaves} over $X$ is a map of underlying presheaves.
\end{definition}

\begin{remark}
If we write the equaliser as $\lim_{\lambda \in \Lambda} F(U_\lambda)$, then we may describe the 
the sheaf condition as an isomorphism 
\[
F(U) \lra \lim_{\lambda \in \Lambda} F(U_\lambda).
\]
That this map is surjective is also known as the \emph{patching condition},
which says that a set of sections $s_\lambda \in F(U_\lambda)$,
which agree under restrictions to intersections, can be patched into a 
section $s \in F(U)$ which restricts to each $s_\lambda$. 
That the above map is injective is also known as the \emph{separation condition}, which says that 
if two sections $s, t \in F(U)$ agree on each $F(U_\lambda)$ then they are the same section. 
\end{remark}

\begin{definition}
A \emph{local homeomorphism} is a continuous map $p \co E \to X$ such that 
for every $e \in E$, there is an open set $V \ni e$ of $E$ and an open set 
$U \ni p(e)$ of $X$ such that $p_{|V}$ is 
a homeomorphism onto $U$. 
We call $X$ the \emph{base space} of the local homeomorphism.
For $X$ a topological space, a \emph{sheaf space over $X$} is a 
local homeomorphism $p \co E \to X$.

Given a map of topological spaces $p \co E \to X$, 
a \emph{section} of $p$ over an open set $U \subseteq X$ is a continuous map $s \co U \to E$
such that $p \circ s=\id_U$. By convention, we write $s_x$ for the value of $s$ at $x \in U$. 
\end{definition}

In particular, local homeomorphisms are open maps
and the images of sections over the open sets of $X$ form a basis for the topology
of $E$, see Tennison \cite[Lemma 2.3.5]{tenn}.

\begin{lemma}
If $p \co E \to X$ is a local homeomorphism, then the preimage of a point $x \in X$ 
(as a subspace of $E$) is a discrete topological space.
\end{lemma}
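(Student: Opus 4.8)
The plan is to show that every singleton of the fibre $p^{-1}(x)$ is open in the subspace topology; this immediately gives that $p^{-1}(x)$ is discrete. The key input is the defining property of a local homeomorphism, namely that each point of $E$ has an open neighbourhood on which $p$ restricts to a homeomorphism, and in particular to an injection.

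First I would fix a point $x \in X$ and an arbitrary $e \in p^{-1}(x)$. Since $p \co E \to X$ is a local homeomorphism, there is an open set $V \ni e$ of $E$ and an open set $U \ni p(e) = x$ of $X$ such that $p|_V \co V \to U$ is a homeomorphism. The only step to record is that $p|_V$ being injective forces $V \cap p^{-1}(x)$ to consist of a single point: if $e' \in V$ with $p(e') = x = p(e)$, then injectivity of $p|_V$ gives $e' = e$. Hence $V \cap p^{-1}(x) = \{e\}$.

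Since $V$ is open in $E$, the set $V \cap p^{-1}(x) = \{e\}$ is open in the subspace topology on $p^{-1}(x)$. As $e \in p^{-1}(x)$ was arbitrary, every singleton of $p^{-1}(x)$ is open, so the subspace topology on $p^{-1}(x)$ is discrete. There is no real obstacle here: the argument is a direct unwinding of the definition of a local homeomorphism, and the only point requiring a word of justification is the injectivity of $p|_V$ on the fibre, which is immediate from $p|_V$ being a homeomorphism onto its image.
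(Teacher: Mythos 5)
Your argument is correct and is the standard one: the paper states this lemma without proof (it is a routine consequence of the definition, essentially as in Tennison), and your unwinding — that injectivity of $p|_V$ forces $V \cap p^{-1}(x) = \{e\}$, so singletons are open in the subspace topology — is exactly the expected justification. No gaps.
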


\begin{lemma}
A sheaf of sets over $X$ defines a sheaf space over $X$.
Conversely, a sheaf space over $X$ defines a sheaf of sets over $X$. 
These two constructions are mutually inverse. 
\end{lemma}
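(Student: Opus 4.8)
The plan is to make the two constructions explicit and then check that each composite is the identity, up to a natural isomorphism.

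First I would recall the two passages. Given a sheaf of sets $F$ over $X$, build its sheaf space exactly as in the sheafification construction used in the proof of Theorem \ref{thm:equivariantsheafspace}: take the set $\Lambda F = \coprod_{x \in X} F_x$ with $p \co \Lambda F \to X$ sending a germ in $F_x$ to $x$, and topologise $\Lambda F$ by declaring the sets $B(s,U) = \{ s_x \mid x \in U \}$, for $U$ open in $X$ and $s \in F(U)$, to be a basis. One checks these sets do form a basis and that $p$ restricts to a homeomorphism $B(s,U) \to U$, so $p$ is a local homeomorphism. Conversely, given a sheaf space $p \co E \to X$, the assignment $U \mapsto \Gamma(U,E)$ of continuous sections, with the evident restriction maps, is a presheaf; it satisfies the sheaf condition because continuity is a local property and two continuous sections over $U$ agreeing on the members of an open cover of $U$ must be equal. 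These are the two constructions named in the statement.

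Next I would prove $\Gamma \Lambda F \cong F$, naturally in $U$. By the description of the topology on $\Lambda F$, a continuous section $s \co U \to \Lambda F$ is, near each point of $U$, of the form $x \mapsto t_x$ for some $t \in F(U')$ with $U' \ni x$: indeed if $s(x) \in B(t,U')$ then $p(s(x)) = x$ forces $s(x) = t_x$, and continuity propagates this to a neighbourhood. The patching condition for the sheaf $F$ glues these local data into an element of $F(U)$, and the separation condition makes it unique, giving a bijection $F(U) \to \Gamma(U, \Lambda F)$, $s \mapsto (x \mapsto s_x)$. It is precisely here that the sheaf axioms (not merely the presheaf structure) are needed. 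Then I would prove $\Lambda \Gamma E \cong E$ over $X$. The key point is the stalk identification $(\Gamma E)_x \cong p^{-1}(x)$, sending the germ at $x$ of a section $s$ to $s(x)$: this is well defined, surjective because a local homeomorphism admits a local section through each point of $E$, and injective because two sections agreeing at $x$ agree on a neighbourhood (using that $p$ is a local homeomorphism). Assembling over all $x$ gives a bijection $\Lambda \Gamma E \to E$ commuting with the projections to $X$; it is a homeomorphism because it carries the basic open set $B(s,U)$ onto the image $s(U) \subseteq E$, and such images of sections form a basis for the topology of $E$ by Tennison \cite[Lemma 2.3.5]{tenn}.

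Finally I would observe that both bijections are natural in the respective morphisms — a map of sheaves induces maps on stalks hence on sheaf spaces, and a map of sheaf spaces induces a map on sections commuting with restriction — so the two constructions are mutually inverse, as claimed. The only delicate points are the use of the full sheaf condition in $\Gamma \Lambda F \cong F$ and the openness of $\Lambda \Gamma E \to E$; the remainder is a routine unwinding of definitions, and the whole statement is classical, with detailed treatments in Tennison \cite[Chapter 2]{tenn} and Swan \cite{swan}.
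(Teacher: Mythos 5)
Your proof is correct and is the standard \'espace \'etal\'e argument: the paper states this lemma in Appendix~\ref{Sec:sheavespresheaves} without proof, deferring to Tennison and Swan, and your writeup is precisely the argument found in those references (construct $\coprod_x F_x$ with the $B(s,U)$ topology one way, take continuous sections the other way, and use patching/separation for $\Gamma\Lambda F\cong F$ and the stalk identification $(\Gamma E)_x\cong p^{-1}(x)$ plus the fact that images of sections form a basis for $\Lambda\Gamma E\cong E$). The only step worth spelling out a little further is that, before patching the local representatives $t^{(i)}$ of a continuous section, one must upgrade their germwise agreement on overlaps to genuine agreement of sections via the separation axiom — but you correctly identify that the full sheaf condition is what is needed at exactly this point.
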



As the constructions are inverse to each other, we see that a section of 
a sheaf space is exactly a section of the corresponding sheaf. 
That is, given a sheaf $F$, we can make a sheaf space $E$, 
and see that sections $F(U)$ are precisely the sections of the sheaf space over $U$: $\Gamma( U, E)$. 
By Tennison \cite[Proposition 2.3.6]{tenn}, the stalk of a sheaf is the preimage of the corresponding 
local homeomorphism:
\[
\underset{U \ni x}{\colim \,} \Gamma(U,E) = E_x=p^{-1} (x) = \colim_{U \ni x} F(U) = F_x.
\]

The construction of a sheaf space did not require that the input was a sheaf, only
that we had a presheaf. thus we may take a presheaf $F$, construct a sheaf space from it $E$
and then construct a sheaf from $E$. We formalise this in the following. 

\begin{definition}
The \emph{sheafification} of a presheaf $F$ is the sheaf corresponding to the 
sheaf space of the presheaf, which we write as  $\sheafify F$. 
\end{definition}

\begin{lemma}\label{lem:noneqsheafify}
The sheafification construction $\sheafify$ is a functor that is left adjoint 
to the forgetful functor from sheaves to presheaves
(this functor is often omitted from the notation). 

The unit of  this adjunction gives a 
canonical map from a presheaf to a sheaf $F \to \sheafify F$. 
The canonical map $\sheafify F \to \sheafify^2 F$ is an isomorphism. 
Furthermore, if $E$ is a sheaf, a map $F \to E$ factors over $F \to \sheafify F$.
\end{lemma}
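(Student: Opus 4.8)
The plan is to establish the adjunction by exhibiting the unit map and verifying the universal property directly on sheaf spaces, and then to read off idempotency as a formal consequence. Functoriality of $\sheafify$ is immediate: a map of presheaves $F \to F'$ induces maps on all stalks $F_x \to F'_x$, hence a map $\coprod_x F_x \to \coprod_x F'_x$ which is continuous for the topologies generated by the sets $B(s,U) = \{ s_x \mid x \in U\}$, so it is a map of sheaf spaces; this assignment respects identities and composition.

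First I would construct the unit. For a presheaf $F$ with sheaf space $p \co E \to X$, define $\eta_F \co F \to \sheafify F$ on an open set $U$ by sending $s \in F(U)$ to the section $x \mapsto s_x$; this is continuous since its image lies in $B(s,U)$, and it is compatible with restriction and natural in $F$ straight from the definition of the germ maps.

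Next I would verify the universal property: for any sheaf $E'$ (regarded as a presheaf) and any presheaf map $\varphi \co F \to E'$, there is a unique sheaf map $\bar\varphi \co \sheafify F \to E'$ with $\bar\varphi \circ \eta_F = \varphi$. For existence, define $\bar\varphi$ on the sheaf space by $s_x \mapsto \varphi(s)_x$; this is well-defined because two sections representing the same germ at $x$ agree on a neighbourhood of $x$ and hence have images agreeing near $x$, it is continuous because it carries the basic open set $B(s,U)$ into the image of the section $\varphi(s) \in E'(U)$, and $\bar\varphi \circ \eta_F = \varphi$ holds by construction. Uniqueness is the crux: since $E'$ is a sheaf, the images of its sections form a basis for its sheaf space and, by the separation condition, a section is determined by its family of germs, so any sheaf map into $E'$ is determined by its induced stalk maps; but $(\sheafify F)_x = F_x$ and the equation $\bar\varphi \circ \eta_F = \varphi$ forces that stalk map to be $\varphi_x$. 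The adjunction $\sheafify \dashv (\text{forget})$ is exactly this universal property, and the final clause of the lemma --- that a map $F \to E$ into a sheaf factors over $F \to \sheafify F$ --- is precisely the existence and uniqueness of $\bar\varphi$.

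Finally, for idempotency: if $G$ is already a sheaf, then $\eta_G \co G \to \sheafify G$ induces an isomorphism on every stalk (both sides have stalk $G_x$), and a stalkwise isomorphism of sheaf spaces is an isomorphism of sheaves; hence $\eta_G$ is an isomorphism. Taking $G = \sheafify F$ shows the canonical map $\sheafify F \to \sheafify^2 F$ is an isomorphism. I expect the one genuinely delicate point to be the uniqueness half above --- pinning down that a morphism out of $\sheafify F$ is rigidly determined by its behaviour on stalks via the separation axiom; the rest is bookkeeping with the basis $\{B(s,U)\}$. All of this is also standard and recorded in Tennison \cite[Chapter 2]{tenn}, which could simply be cited.
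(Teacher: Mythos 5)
Your argument is correct and is exactly the standard sheaf-space proof of this classical fact; the paper itself gives no proof of this appendix lemma, deferring (as you do) to Tennison. The one point you rightly flag as delicate --- that a sheaf map out of $\sheafify F$ is determined by its stalk maps, and that a stalkwise isomorphism between sheaves is an isomorphism --- is handled correctly and is where the sheaf (as opposed to presheaf) hypothesis on the target genuinely enters.
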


We can alter the definition of a presheaf 
to land in abelian groups or modules over a ring $R$. 
We then define sheaves and sheaf spaces of $R$-modules
in a similar fashion. 

\begin{definition}
Let $X$ be a topological space. 
A \emph{presheaf of $R$-modules over $X$} is a contravariant functor 
from the category of open sets of $X$ and inclusions to sets
\[
F \co \mcO(X)^{op} \lra R \leftmod.
\]
A \emph{sheaf of $R$-modules over $X$} is a sheaf whose underlying presheaf is a 
presheaf of $R$-modules over $X$. 

A \emph{sheaf space of $R$-modules over $X$} 
is a sheaf space whose underlying presheaf is a presheaf of $R$-modules over $X$. 
\end{definition}

The given definition of a sheaf space of $R$-modules is not as useful as one would like,
so a more intrinsic characterisation is given below. 

\begin{lemma}
A sheaf space $p \co E \to X$ is a sheaf space of $R$-modules if and only if
for each $x \in X$, $p^{-1} (x)$ is an $R$-module and the map 
\begin{align*}
E \underset{X}{\times} E  = \{ (e,e') \mid p(e)=p(e') \} & \lra E \\
(e,e') & \longmapsto e-e'
\end{align*}
induced by the pointwise group operation is continuous and for $r \in R$, the map 
$e \mapsto re$ is a continuous self map of $E$. 
\end{lemma}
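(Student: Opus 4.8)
The plan is to prove the two directions separately; in each case the work consists of translating between the pointwise (stalkwise) $R$-module operations and the $R$-module operations on the presheaf of sections $U\mapsto\Gamma(U,E)$. The only non-formal ingredient is the standard fact recalled above: for a local homeomorphism $p\co E\to X$ the images $s(U)$ of sections over open sets $U\subseteq X$ form a basis for the topology of $E$; consequently each section $s$ is a homeomorphism of $U$ onto the open set $s(U)$, and every point of $E$ lies in $s(U)$ for some local section $s$.

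For the ``only if'' direction, suppose $E$ is a sheaf space of $R$-modules, so $\Gamma(-,E)$ is a presheaf of $R$-modules. Then $p^{-1}(x)=E_x=\colim_{U\ni x}\Gamma(U,E)$ is a filtered colimit of $R$-modules along the restriction maps, so it carries a canonical $R$-module structure, for which evaluation $\Gamma(U,E)\to p^{-1}(x)$ is $R$-linear. To see that subtraction $m\co E\times_X E\to E$ is continuous, I would cover $E\times_X E$ by the open sets $s(U)\times_X t(U)$ with $s,t$ ranging over pairs of sections over a common open $U$: these are open since $s(U),t(U)$ are open, and they cover since a point of $E\times_X E$ over $x$ has both coordinates lifting to sections which one restricts to the intersection of their domains. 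The map $p\circ\mathrm{pr}_1$ identifies $s(U)\times_X t(U)$ homeomorphically with $U$, and under this identification $m$ becomes the difference section $s-t\in\Gamma(U,E)$, which is continuous as a map $U\to E$; this is the one place the hypothesis that $\Gamma(-,E)$ is a presheaf of $R$-modules enters. Thus $m$ is continuous on each member of an open cover, hence continuous. The argument that $e\mapsto re$ is continuous is identical, covering $E$ by the $s(U)$ and noting that this map restricted to $s(U)\cong U$ is the section $rs\in\Gamma(U,E)$.

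For the ``if'' direction, suppose each $p^{-1}(x)$ is an $R$-module and that $m$ and each scalar multiplication $\mu_r$ are continuous. A section over open $U$ is precisely a family $(s(x))_{x\in U}$ with $s(x)\in p^{-1}(x)$, so $\Gamma(U,E)$ is a subset of the $R$-module $\prod_{x\in U}p^{-1}(x)$ with pointwise operations, and it suffices to check it is an $R$-submodule with restriction maps $R$-linear. If $s,t\in\Gamma(U,E)$ then $(s,t)\co U\to E\times_X E$ is continuous with image in the fibre product, so $s-t=m\circ(s,t)$ is again a section; the zero section is continuous because locally it equals $\sigma-\sigma$ for a local section $\sigma$, so $\Gamma(U,E)$ is nonempty and closed under subtraction, hence a subgroup; and $rs=\mu_r\circ s$ is again a section, giving closure under scalars. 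Restriction maps are $R$-linear for the pointwise structure and functoriality is immediate, so $\Gamma(-,E)$ is a presheaf of $R$-modules, i.e.\ $E$ is a sheaf space of $R$-modules; one also sees that the colimit map $E_x\to p^{-1}(x)$ is an $R$-module isomorphism, so the recovered stalk structure agrees with the given one.

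The only step requiring genuine care is the continuity of the subtraction map in the ``only if'' direction: one must recognise the subtraction map, locally on $E\times_X E$, as one of the difference sections $s-t$, which uses both the ``section images form a basis'' fact and the fact that differences of sections are sections. Everything else is routine bookkeeping with pointwise operations.
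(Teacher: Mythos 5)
Your argument is correct. The paper states this lemma without proof (it appears in the appendix of recollections, deferring to standard references such as Tennison), so there is no in-text argument to compare against; your proof is the standard one, translating between the $R$-module structure on the sections presheaf $U\mapsto\Gamma(U,E)$ and the fibrewise operations, using that the images $s(U)$ of local sections form a basis for the topology of $E$. You correctly isolate the one point of substance, namely that on $s(U)\times_X t(U)\cong U$ the subtraction map coincides with the difference section $s-t$, which rests on the identification of the fibre $p^{-1}(x)$ with the colimit stalk $E_x$ sending $s(x)$ to the germ $s_x$.
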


The results on sheaves, sheaf spaces and sheafification all extend to 
$R$-module variants. From here on we will no longer be as precise about 
the difference between a sheaf and a sheaf space. 

We will find it useful to restrict our definition of presheaves to a basis
in Section \ref{sec:equivpresheaves} where we define equivariant presheaves. 
The definition relies on the following result of Tennison \cite[Lemma 4.2.6]{tenn}.
It says that a sheaf over a topological space $X$ is uniquely determined by its 
behaviour on a basis for the topology of $X$.  
The lemma is stated for (pre)sheaves of sets but readily extends to sheaves of $R$-modules.

\begin{lemma}\label{lem:sheafoverbasis}
Let $X$ be a topological space with $\mcB$ a basis for the topology.
Write $\mcO(\mcB)$ to be the category of elements of $\mcB$
and inclusions. 

Assume there is a contravariant functor 
\[
F \co \mcO(\mcB)^{op} \lra \sets
\]
such that whenever $U = \cup_{\lambda \in \Lambda} U_\lambda$ in $\mcO(\mcB)$, 
we have an equaliser diagram
\[
\xymatrix{
F(U) = \textnormal{eq} \Bigg(  
\displaystyle\coprod_{\lambda \in \Lambda} F(U_\lambda) 
\ar@<0.7ex>[r] \ar@<-0.7ex>[r]   &  
\displaystyle\coprod_{(\lambda,\mu) \in \Lambda \times \Lambda} F(U_\lambda \cap U_\mu)
\Bigg).  
}
\]
Then there is a sheaf $F'$ on $X$, such that 
$F'$ restricted to $\mcO(\mcB)^{op}$ is equal to $F$
Moreover, this sheaf $F'$ is unique up to isomorphism. 
\end{lemma}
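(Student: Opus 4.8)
The plan is to realise $F'$ as the sheaf of sections of a sheaf space built directly from $F$, in exactly the manner of the proof of Theorem \ref{thm:equivariantsheafspace} but with no group action present. First I would define, for each $x \in X$, the stalk $F_x = \colim_{U \in \mcB,\, x \in U} F(U)$; since $\mcB$ is a basis, the members of $\mcB$ containing $x$ are cofinal among all open sets containing $x$, so this colimit is what will become the stalk of $F'$. Then I form $E = \coprod_{x \in X} F_x$ with projection $p \co E \to X$ and topologise $E$ by taking the sets $B(s,U) = \{\, s_x \mid x \in U \,\}$, for $U \in \mcB$ and $s \in F(U)$, as a basis. A routine check --- the separation half of the hypothesised equaliser condition being exactly what is needed --- shows these sets form a basis and that $p$ is a local homeomorphism. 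Finally I let $F'$ be the sheaf of continuous sections of $p$, which is automatically a sheaf of sets on the whole of $X$.

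The next step is to show $F'$ restricted to $\mcO(\mcB)$ is naturally isomorphic to $F$. There is an evident natural transformation $\eta_U \co F(U) \lra F'(U) = \Gamma(U,E)$ sending $s$ to the section $x \mapsto s_x$. Injectivity of $\eta_U$: if $s,t \in F(U)$ satisfy $s_x = t_x$ for every $x \in U$, then each $x$ lies in some $V \in \mcB$ with $V \subseteq U$ on which $s$ and $t$ agree; these $V$ cover $U$ and the separation half of the assumed equaliser diagram forces $s = t$. Surjectivity: a section $\sigma \in \Gamma(U,E)$, together with the explicit basis for the topology of $E$, produces for each $x \in U$ a member $V_x \in \mcB$ with $V_x \subseteq U$ and an element $s^{(x)} \in F(V_x)$ with $\eta_{V_x}(s^{(x)}) = \sigma|_{V_x}$; on an overlap $V_x \cap V_y$ the germs of $s^{(x)}$ and $s^{(y)}$ coincide at every point, so after covering the overlap by members of $\mcB$ and using separation they restrict identically, whereupon the patching half of the assumed equaliser diagram glues the $s^{(x)}$ into a unique $s \in F(U)$ with $\eta_U(s) = \sigma$.

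For uniqueness, suppose $G$ is a second sheaf of sets on $X$ with $G$ restricted to $\mcO(\mcB)$ equal to $F$. For any open $U \subseteq X$, applying the sheaf condition to the cover of $U$ by all members of $\mcB$ it contains presents $G(U)$ as a limit of the values of $G$ --- equivalently of $F$ --- over the full subcategory of $\mcO(\mcB)$ on the members contained in $U$, and this presentation is natural in $U$. Since the same holds for $F'$, the equality $G = F'$ on $\mcB$ extends uniquely to an isomorphism $G \cong F'$ compatible with all restriction maps; this yields uniqueness of the extension, not merely of the extended sheaf.

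The step I expect to be the main obstacle is the bookkeeping around intersections of members of $\mcB$, which need not themselves lie in $\mcB$: the hypothesised equaliser diagram only constrains $F$ on elements of $\mcB$, so whenever I compare two local sections on an overlap $V_x \cap V_y$ I must first pass to a cover of that overlap by members of $\mcB$ and transfer the comparison through germs. In the applications of this lemma in the paper, $\mcB$ is the compact-open basis of a profinite space and is closed under finite intersection, so this subtlety disappears; the general statement simply requires the extra care. Beyond this, the argument is the standard sheaf-space construction --- indeed the result is exactly Tennison \cite[Lemma 4.2.6]{tenn}, which could also be cited directly.
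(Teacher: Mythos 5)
Your proof is correct and takes essentially the same route the paper does: the paper simply cites Tennison \cite[Lemma 4.2.6]{tenn} and sketches exactly your argument --- build the sheaf space from the stalks of $F$ and take its sheaf of sections, with uniqueness following from the presentation of $F'(V)$ as the limit (right Kan extension) of $F$ over the basis elements contained in $V$. Your explicit attention to the point that intersections of members of $\mcB$ need not lie in $\mcB$ is a welcome extra precision over the paper's sketch.
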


The proof is based on the idea that one can define a sheaf space from $F$ as for an ordinary (pre)sheaf. 
One also sees that 
\[
F'(V) = \lim_{\mcO(\mcB) \ni U \subseteq V} F(U)
\]
describes $F'$ as a right Kan extension over the inclusion $\mcO(\mcB) \to \mcO(X)$.

We will need to consider change of base space constructions on sheaves, 
particularly in the case of the maps $g \co X \lra X$ coming from the action of a group $G$ on $X$. 
Again we state the theorem for sets, but it can be written for $R$-modules just as well. 

\begin{definition}\label{defn:pullpushfunctors}
Let $f \co X \to Y$ be a map of topological spaces.
Let $F$ be a presheaf over $X$ and let $E$ be a sheaf space over $Y$.

We define the presheaf $f_*(F)$ over $Y$, the \emph{push forward of $F$ over $f$}, to be the functor
\[
\mcO(Y)^{op} \overset{f^{-1}}{\lra} \mcO(X)^{op} \overset{F}{\lra} \sets
\]
where $f^{-1}$ is the pre-image functor induced by $f$. 

We define a sheaf space $f^*(E)$ over $X$, the \emph{pull back of $E$ over $f$} by the pullback
\[
\xymatrix{
f^*(E) \ar[r] \ar[d] &
E \ar[d]_p \\
X \ar[r]_f & Y.
}
\]
\end{definition}

\begin{proposition}\label{prop:pushpulladjunct}
Let $f \co X \to Y$ be a map of topological spaces.
If $F$ is a sheaf over $X$, then $f_*(F)$ is a sheaf over $Y$. 
If $E$ is a sheaf space over $Y$, then $f^*(E)$ is a sheaf space over $X$. 
Given $x \in X$, there is an isomorphism of stalks
\[
f^*(E)_x \cong E_{f(x)}.
\]

The functors $f^*$ and $f_*$ form an adjunction
\[
\adjunction{f^*}{\setsheaf{Y}}{\setsheaf{X}}{f_*}
\]
with the left adjoint $f^*$ preserving finite limits. 
\end{proposition}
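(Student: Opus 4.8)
The plan is to treat $f_*$ via its functor-of-sections description and $f^*$ via its sheaf-space description, using the equivalence between sheaves and sheaf spaces freely. First I would check that $f_*F$ is a sheaf: given an open set $V \subseteq Y$ and an open cover $V = \bigcup_\lambda V_\lambda$, the preimages form an open cover $f^{-1}V = \bigcup_\lambda f^{-1}V_\lambda$ with $f^{-1}V_\lambda \cap f^{-1}V_\mu = f^{-1}(V_\lambda \cap V_\mu)$, so the sheaf condition for $F$ on this cover is literally the sheaf condition for $f_*F$ on the original cover, since $(f_*F)(W) = F(f^{-1}W)$ by definition. Next, that $f^*E$ is a sheaf space follows because local homeomorphisms are stable under base change: if $U \subseteq X$ is small enough that $f(U)$ lies inside an open $W \subseteq Y$ over which $E$ restricts to a disjoint union of copies of $W$, then $f^*E$ restricted to $U$ is a disjoint union of copies of $U$, so $f^*E \to X$ is a local homeomorphism. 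The stalk computation is immediate from the pullback square: $f^*(E)_x = (X \times_Y E)_x = \{(x,e) : p(e) = f(x)\} = p^{-1}(f(x)) = E_{f(x)}$, naturally.

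The heart of the proof is the adjunction $f^* \dashv f_*$. I would exhibit the natural bijection
\[
\Hom_{\setsheaf{X}}(f^*E, F) \cong \Hom_{\setsheaf{Y}}(E, f_*F)
\]
directly. Given $\alpha \co f^*E \to F$ over $X$ and $e \in E$ with $p(e) = y$, choose an open $V \ni y$ and a section $\sigma \co V \to E$ of $p$ with $\sigma(y) = e$; then $x \mapsto (x, \sigma(f(x)))$ is a section of $f^*E$ over $f^{-1}V$, and post-composing with $\alpha$ yields a section of $F$ over $f^{-1}V$, i.e.\ an element of $F(f^{-1}V) = (f_*F)(V)$ whose germ at $y$ is declared to be the image of $e$. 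One checks this is independent of the choices, continuous, a map of sheaf spaces over $Y$, and natural in $F$. Conversely, from $\beta \co E \to f_*F$ over $Y$ and $(x,e) \in f^*E$ with $f(x) = p(e) = y$, the value $\beta(e) \in (f_*F)_y = \colim_{V \ni y} F(f^{-1}V)$ is represented by some $s \in F(f^{-1}V)$, and since $x \in f^{-1}V$ we set $\alpha(x,e) = s_x \in F_x$. A routine check shows these assignments are mutually inverse and natural; equivalently, one may write down the unit $E \to f_*f^*E$ (a section $t$ over $V$ goes to $t \circ f$ over $f^{-1}V$) and the counit $f^*f_*F \to F$ (the map on stalks $(f_*F)_{f(x)} = \colim_{V \ni f(x)} F(f^{-1}V) \to \colim_{U \ni x} F(U) = F_x$) and verify the triangle identities.

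Finally, $f^*$ preserves finite limits. Finite limits of sheaf spaces over a base are computed in the category of spaces over that base (the terminal sheaf space is the identity map, and a pullback of local homeomorphisms is again a local homeomorphism), while the pullback functor $\mathbf{Top}/Y \to \mathbf{Top}/X$, $E \mapsto X \times_Y E$, is a right adjoint (to post-composition with $f$) and hence preserves all limits; combining these gives the claim. The main obstacle is the adjunction step: because $f$ is an arbitrary continuous map rather than an open one, there is no clean formula for $(f^*E)(U)$, so one must shuttle between the sections picture for $f_*$ and the sheaf-space picture for $f^*$, and the bookkeeping needed to see that the hom-set bijection is well defined, continuous, and natural is where essentially all the work lies; the remaining points are formal. (The statement is classical; see Tennison \cite{tenn} or Bredon \cite{bredonsheaf}.)
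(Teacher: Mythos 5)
Your argument is correct in structure and in essentially all of its details; it is the standard textbook proof. For comparison: the paper does not prove this proposition at all. It appears in the appendix as background, with the reader referred to Swan, Tennison and Bredon, and only the finite-limit claim receives a justification, namely the same one-line over-category argument you give (pullback along $f$ is a right adjoint on over-categories, and finite limits of sheaf spaces are computed there). Your verification that $f_*F$ satisfies the sheaf condition, the stalk computation, and the explicit hom-set bijection with its unit and counit all match the classical treatment, so there is no divergence of approach to report.

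One local slip should be repaired. In arguing that $f^*E \to X$ is a local homeomorphism you ask for an open $W \subseteq Y$ over which $E$ ``restricts to a disjoint union of copies of $W$''. That is the local triviality condition for a covering map, not the definition of a local homeomorphism, and such a $W$ need not exist: take $E \to Y$ to be the inclusion of an open subset, or the sheaf space of a skyscraper sheaf. The correct argument works pointwise in the total space rather than over opens in the base: given $(x,e) \in X \times_Y E$, choose an open $V \ni e$ in $E$ on which $p$ restricts to a homeomorphism onto an open set $W \ni f(x)$ in $Y$; then $f^{-1}(W) \times_Y V$ is an open neighbourhood of $(x,e)$ in $f^*E$ which projects homeomorphically onto $f^{-1}(W)$. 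With that one-line repair, the stability of local homeomorphisms under base change, and hence the rest of your proof, goes through as written.
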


That $f^*$ preserves finite limits follows from the analogous statement for 
categories of over-spaces, see also Construction \ref{limconstruct}.

\begin{definition}\label{defn:extres}
When $f \co X \to Y$ is an inclusion of a subspace, we call $f^*$ the \emph{restriction to $X$ functor}.

When working with sheaves of $R$-modules, if 
$f \co X \to Y$ is an inclusion of a closed subspace, 
then $f_*$ is called \emph{extension by zero}. 
\end{definition}

Given a sheaf $F$ over $X$ and an inclusion of a closed subspace 
$f \co X \to Y$, the stalks of the extension by zero $f_* F$ are zero outside $X$. 
Moreover, the restriction of the extension by zero 
$f^* f_* F$, is isomorphic to $F$. 
We also note that if $f$ is an open map and $F$ a sheaf over $Y$, then 
the pull back sheaf $f^*F$ can be defined by $f^*F(U) =  F(f(U))$.

\bibliographystyle{alpha}
\bibliography{ourbib}

\end{document}